\newcommand\ins[1]{#1\xspace}
\newcommand\out[1]{\relax}
\newcommand\fto[2]{\out{#1}\ins{#2}}
\newcommand{\de}{{\text{Def}}}
\newcommand{\rde}{{\text{RDef}}}
\newcommand{\K}{{\Kq}\xspace}
\newtheorem{theorem}{Theorem}[section]
\newtheorem{proposition}[theorem]{Proposition}
\newtheorem{corollary}[theorem]{Corollary}
\newtheorem{lemma}[theorem]{Lemma}
\theoremstyle{definition}
\newtheorem{definition}[theorem]{Definition}
\theoremstyle{remark}
\newtheorem{remark}[theorem]{Remark}
\newcommand\sgn{{\operatorname{sgn}}}
\newcommand\ceq{{\, := \,}}
\newcommand\tq{{\, \vert \, }}
\newcommand\ie{\emph{i.e.}}
\DeclareMathOperator\Ad{Ad}
\DeclareMathOperator\ad{ad}
\newcommand\me[1]{{\protect\ensuremath{#1}}\xspace}
\newcommand\mb[1]{\me{\mathbb{#1}}}
\newcommand\mc[1]{\me{\mathcal{#1}}}
\newcommand\mf[1]{\me{\mathfrak{#1}}}
\newcommand\Kq{\mb K}
\newcommand\afort{\emph{a fortiori}\xspace}
\newcommand\loccit{\emph{loc.\ cit.}\xspace}
\newcommand\hecke{{C^\infty_c}}
\newcommand\Hecke{{\mathcal{H}}}
\DeclareMathOperator\vol{vol}
\newcommand\A{\mb A}
\DeclareMathOperator\ord{ord}
\newcommand\ring[1]{\mb{#1}}
\newcommand\C{\ring C} 
\newcommand\Q{\mb Q}
\newcommand\Z{\mb Z} 
\newcommand\R{\ring R}
\newcommand\F{\mb F} 
\newcommand\N{\ring N}
\newcommand\lef{\ring L}
\newcommand\eg{\emph{e.g.},}
\newcommand\rf{\mf k}
\renewcommand\rf{\ensuremath\Bbbk\xspace}
\DeclareMathOperator\U{U}
\DeclareMathOperator\lop{End}
\newcommand\fh{\mathfrak h}
\newcommand\fso{\mathfrak{so}}
\newcommand{\st}{{\text{st}}}
\newcommand\set[2]{\sset{#1\tq#2}}
\newcommand\sset[1]{\{#1\}}
\newcommand\indexme[1]{}
\newcommand\indexmem[1]{}
\DeclareMathOperator\depth{d}
\DeclareMathOperator\supp{supp}
\DeclareMathOperator\Lie{Lie}
\DeclareMathOperator\Int{Int}
\DeclareMathOperator\meas{meas}
\DeclareMathOperator\GL{GL}
\DeclareMathOperator\SL{SL}
\newcommand\BB{\mc B}
\newcommand\fg{\mf g}
\newcommand\ft{\mf t}
\newcommand\wtilde[1]{\widetilde{#1}}
\newcommand\vG{\me{\vec G}}
\newcommand\tR{\me{\wtilde\R}}
\newcommand\inv{^{-1}}
\newcommand\textsub[1]{_{\protect\mathrm{#1}}}
\newcommand\textsup[1]{^{\protect\mathrm{#1}}}
\newcommand\textlsup[1]{\lsup{\protect\mathrm{#1}}}
\newcommand\red{\textsup{red}}
\newcommand\reg{\textsup{reg}}
\newcommand\tame{\textsup{tame}}
\newcommand\unram{\textsup{unr}}
\newcommand\sep{\textsup{sep}}
\newcommand\rBB{\BB\red}
\newcommand\ldef{:=}
\newcommand\lsup[1]{{}^{#1}}
\newcommand\mexp{\me{\mathsf e}}
\newcommand\trans[1]{\textlsup t#1}
\newcommand\dota{\nolinebreak\cdot\nolinebreak} 
\newcommand\pref[1]{(\ref{#1})}
\newcounter{tempc}
\newcounter{incenumi} 
\newcounter{tagenumi} 
\newcommand\nwit[2]{\@addtoreset{#1}{#2}}
\newcommand\term[1]{\textbf{#1}\xspace}
\theoremstyle{remark}
\title[Computability of characters near the identity]
{On the computability of some positive-depth supercuspidal characters near the identity}
\date\today
\author{Raf Cluckers}
\address{Katholieke Universiteit Leuven, Department of Mathematics,
Celestijnenlaan 200B, B-3001 Leu\-ven, Bel\-gium\\ The author is a
postdoctoral fellow of the Fund for Scientific Research - Flanders
(Belgium) (F.W.O.)} \email{raf.cluckers@wis.kuleuven.be}
\urladdr{http://www.wis.kuleuven.be/algebra/Raf/}
\author{Clifton~Cunningham}
\address{Department of Mathematics, University of Calgary}
\email{cunning@math.ucalgary.ca}
\author{Julia~Gordon}
\address{Department of Mathematics, University of British Columbia}
\email{gor@math.ubc.ca}
\author{Loren Spice}
\address{Department of Mathematics, Texas Christian University}
\email{l.spice@tcu.edu}
\subjclass{22E50, 03C98}
\keywords{character, orbital integral, motivic integration,
supercuspidal representation}
\begin{document}

\begin{abstract}
This paper is concerned with the values of Harish-Chandra characters 
of a class of positive-depth, toral, very supercuspidal representations of $p$-adic
symplectic and special orthogonal groups, near the identity element.
We declare two representations equivalent if their characters coincide on a 
specific neighbourhood of the identity 
(which is larger than the neighbourhood on which the Harish-Chandra local 
character expansion holds).
We construct a parameter space $B$ (that depends on the group and a real number  $r>0$) 
for the set of equivalence classes 
of the representations of minimal depth $r$ satisfying some 
additional assumptions. 
This parameter space is essentially a geometric object defined over 
$\Q$. Given a non-Archimedean local field $\K$ with sufficiently large residual 
characteristic, the part of the character table near the identity element 
for $G(\K)$
that comes from our class of representations is parameterized 
by the residue-field points of $B$.
The character values themselves can be recovered by specialization from
a constructible motivic exponential function, in the terminology of 
\cite{CLF}. The values of such functions are algorithmically computable.
It is in this sense that we show that a large part of the character table
of the group $G(\K)$ is computable.     

\end{abstract}

\maketitle

\section*{Introduction}
Recent years have seen considerable progress in the program
of studying harmonic analysis on $p$-adic groups in a
field-independent fashion.
This paper takes another step in that program.

On one hand, we are motivated by the hope of an analogy with the case of 
finite groups of Lie type, where for many representations, character values 
``near the identity'' are given by Green polynomials, and where in general the 
characters are recovered from geometric objects. On the other hand,  
 we are guided by the examples in 
\cites{kazhdan-lusztig:fixed-point,hales:characters}
that indicate that for a representation of a $p$-adic group, 
the character values at certain families of
topologically unipotent, regular semisimple elements 
can be expressed in terms of the numbers of 
rational points of a family of hyperelliptic curves
over the residue field.

We consider in this paper
a class of functions that includes both rational functions in
the cardinality of the residue field and 
functions defined by such geometric counting methods; 
namely, the class of constructible exponential functions defined in 
\cite{CLF}.  See \S\ref{sec:our-funs} for details.
The key feature of these functions
is that they can be described in a field-independent manner, by means 
of a formal language of logic. 
Our hope is that Harish-Chandra character of a supercuspidal 
representation  of a $p$-adic group belongs to 
this class of functions.
In this paper, we prove that this is the case for the
restrictions to a neighbourhood of the identity of the characters of a
certain class of toral, very 
supercuspidal, positive-depth representations of symplectic and (split) 
special orthogonal groups. A similar result (stated in slightly different terms) 
was proved in   \cite{gordon:depth0} for a class of
supercuspidal, depth-zero representations (the so called Deligne--Lusztig  representations) of 
symplectic and odd orthogonal groups.
 
Following the program outlined in 
\cite{hales:computed}, we
will prove our result by using the theory of 
motivic integration and the language of definable subassignments.
The specific version of motivic integration that we use here was developed 
by R.~Cluckers and F.~Loeser in \cites{CL, CLF}. We refer to these original
papers for details, and to the expository articles \cites{CL.expo, 
hales:whatis, GY} for an introduction to these ideas and the language. 

When we try to talk about the Harish-Chandra characters of the
representations of a $p$-adic matrix group
in a field-independent way,
we encounter the issue of being unable to identify how we
can keep the representation the ``same'' while varying the
ground field.
More precisely, at present, we do not know if the construction of supercuspidal representations 
can be carried out within the formal language that we use, without fixing the 
field first; so any question about ``the character of a given 
representation $\pi$'' is not well defined in this context. Instead, we 
construct, for each local, non-Archimedean field \K,
a space, over the residue field of \K,
whose rational points parameterize $r$-equivalence classes of
supercuspidal representations,
in the sense of Definition \ref{def: r-equiv.reps}.
See Definition \ref{def: B}.
The necessity of constructing this parameter space before
even beginning to speak of character values requires us to
impose numerous conditions on the representations that we
consider.

Once we construct the parameter space, we show that
there is a constructible motivic exponential function (whose variables are
the parameters for the representation, and a group element)
whose specialization at \K encodes the restrictions of
the Harish-Chandra characters of the representations of
$G(\K)$ that we consider, up to a rational constant.
See Theorem \ref{thm:character}.

Now we say a few words about the methods of this construction, and the 
assumptions that, at present, we have to impose on the
representations that we consider.
Our main strategy is to express the
restriction of the character to a neighbourhood 
of the identity
as
a Fourier transform of an orbital integral at a semisimple 
element, and then to use the results of \cite{cunningham-hales:good} about good orbital 
integrals.

To make this more precise,
let, for the moment, $\K$ be a fixed local, non-Archimedean field,
and $G$ a connected, reductive, algebraic group defined over
\K.
In \cite{jkim-murnaghan:gamma-asymptotic}, Kim and Murnaghan
describe the theory of $\Gamma$-asymptotic expansions.  This
is a generalization of Murnaghan--Kirillov theory (see
\cites{murnaghan:chars-classical,adler-debacker:mk-theory})
that
associates to
a smooth, irreducible representation $\pi$ of $G(\K)$
satisfying certain conditions (see
\cite{jkim-murnaghan:gamma-asymptotic}*{Definition 4.1.3(2)})
a \K-rational, semisimple element $\Gamma$ in the Lie
algebra of $G(\K)$
(see Theorem 4.4.1 \loccit).
In order to combine these results with those of
Cunningham--Hales \cite{cunningham-hales:good},
we need to impose restrictions on $\Gamma$
(see Definition \ref{defn:rsrg}).
These conditions imply that the representation $\pi$ is
supercuspidal (indeed, toral and ``very supercuspidal''), so
that we may use the explicit construction described by J.-K.~Yu
(see \cite{yu:supercuspidal} and our \S\ref{sub: JKdatum}).

Once we have constructed the parameter space for the good orbital integrals that arise from the 
characters that we consider, we use motivic integration to show that the {\emph {distribution characters}}, evaluated at families of constructible 
test functions supported by the given neighbourhood of the identity element, 
again produce constructible functions in the parameters.
From this, we proceed to show that the Harish-Chandra character function is, 
in fact, a constructible exponential function, using a
result of J.~Korman on
the local constancy of characters \cite{korman:local-constancy}
(which we reproduce here as Theorem \ref{thm:korman}).
This is carried out in Section \ref{sub: proof thm 3}.
We observe that the local constancy result by itself is not sufficient to conclude that the character is 
a constructible exponential function, and we need all our conclusions about 
the distribution character in addition to it.
In Section \ref{sec:badprimes},
we offer a more general perspective on distributions and
definability, which we expect to be useful in the context of
motivic integration.

To summarize, starting with a symplectic or special orthogonal group $G$, and 
a positive real number $r$, we 
produce a definable subassignment $B_{\fg, -r}$  whose $\rf_{\K}$-points 
parameterize the $r$-equivalence classes of ``restricted'' 
representations (in the sense defined in Section \ref{sub: JKdatum}) of 
$G(\K)$ of minimal depth $r$, where $\rf_\K$ stands for the residue field of $\K$. We then define,
for each $y \in B_{\fg,-r}(\rf_\K)$,
a constructible exponential 
motivic function on the ``subassignment'' of regular topologically 
unipotent elements in $G$ such 
 that the restriction of its specialization to
a neighbourhood of the identity (depending on $r$)
coincides with the restrictions of the Harish-Chandra characters of 
the representations of $G(\K)$ in the $r$-equivalence class
that corresponds to the point $y$.

By definition, 
the values of such constructible exponential functions are in principle algorithmically computable. It is in this sense that we show that in principle, there 
exists an algorithm that generates the part of the character table of $G$ that
corresponds to the character values of restricted representations of a given minimal depth, 
in a neighbourhood of the identity element (which depends on the depth).  

\thanks
The third author is grateful to Fiona Murnaghan for helpful conversations and 
encouragement, and to Max Planck Institute in Bonn for hospitality while a part of this paper was written.

\tableofcontents

\section{Preliminaries}
\label{sec:hyps}

In order to apply the results of
\cite{cunningham-hales:good}, we assume throughout that $G$ is
a symplectic group $G=\operatorname{Sp}_{2N}$
or
a split special orthogonal group   $G=\operatorname{SO}_n$.
In the first case, we write $n_G = 2N$.
In the second, we write $n_G = n$.
Thus, $n_G$ should be thought of as the ``natural matrix
size'' of elements of $G$.
We will denote the Lie algebra of $G$ by \fg,
and the linear dual of \fg by $\fg^*$.

The symbol $\K$ is reserved for a non-Archimedean, local 
field (with no assumption on the characteristic)---\ie,
a finite extension of $\Q_p$, or the field of Laurent
series over a finite field
.
We will henceforth usually drop the adjective
`non-Archimedean'.
We will assume whenever convenient that the residual
characteristic, $p$, of \K is larger than some specified bound.
In particular, we shall always assume that it is odd.

The ring of integers of $\K$ will always be denoted by ${\mathcal O}_\K$,
and the residue field of \K by $\rf_{\K}$.
We write $\K\unram$
	(respectively, $\K\tame$)
for an arbitrary unramified
	(respectively, tame)
closure of \K.
If $E/\K$ is an algebraic extension, then there is a unique
valuation, $\ord_\K$, on $E$ such that $\ord_\K(\K) = \Z$.
We call $\ord_\K$ the \term{\K-normalised valuation}
(on $E$).



In all cases, we realize $G$ explicitly as the group 
$\set{X \in \operatorname{GL}_n}{\trans X J X = J}$,
where $J$ is the matrix satisfying
\begin{equation}
\label{eq:J-symp}
J_{ij}=\begin{cases}
(-1)^i, & i+j=n_G+1 \\
0,      & \text{otherwise,}
\end{cases}
\end{equation}
in the  symplectic case;
and
\begin{equation}
\label{eq:J-orth}
J_{ij}=\begin{cases}
1, & i+j=n_G+1 \\
0, & \text{otherwise,}
\end{cases}
\end{equation}
in the special orthogonal case.
Note that, with this definition, $G$ is automatically \Z-split
in the even special orthogonal case.

Recall that there is associated to $G$
(or any reductive group over \K)
a polysimplicial
$G(\K)$-set $\BB(G, \K)$, the \term{building} of $G$ over \K
(see \cite{bruhat-tits:reductive-groups-1}*{\S7.4}, where
\mc I is used instead),
and, for each $x \in \BB(G, \K)$, a compact, open
subgroup $G(\K)_x$ of $G(\K)$, called a \term{parahoric} subgroup
(see
\cite{bruhat-tits:reductive-groups-2}*{Proposition 4.6.28(i)},
where $P_{\sset x}^0$ is used instead).
Usually one would need to distinguish between the reduced
and enlarged buildings, but, in our situation, $G$ is
semisimple, so there is no difference.

Moy and Prasad have described, for each point
$x \in \BB(G, \K)$, filtrations
\begin{itemize}
\item
of $\fg(\K)$ by compact, open lattices $\fg(\K)_{x, r}$,
\item
of $\fg^*(\K)$ by compact, open subgroups $\fg^*(\K)_{x, r}$, 
and
\item
of $G(\K)_x$ by compact, open, normal subgroups
$G(\K)_{x, r}$
with $G(\K)_x = G(\K)_{x, 0}$
\end{itemize}
(see \cite{moy-prasad:k-types}*{\S\S2.6, 3.2--3.3},
	where $P_{x, r}$ is used instead of $G(\K)_{x, r}$,
and \cite{moy-prasad:jacquet}*{\S3.2--3.3},
	where $\mc G_{x, r}$ is used instead of $G(\K)_{x, r}$).
The indexings of these filtrations depend on the choice of
normalization of valuation.
The index $r$ usually ranges over
$\tR \setminus \sset\infty$
(where the set \tR is as in
\cite{bruhat-tits:reductive-groups-1}*{\S6.4.1}),
but we will find it convenient
to put $\fg(\K)_{x, \infty} = \sset0$,
$\fg^*(\K)_{x, \infty} = \sset0$,
and
$G(\K)_{x, \infty} = \sset1$
for all $x \in \BB(G, \K)$.
These satisfy the obvious $G(\K)$-equivariance
properties, such as that
$\Int(g)G(\K)_{x, r} = G(\K)_{g\dota x, r}$
for all $g \in G(\K)$, $x \in \BB(G, \K)$, and $r \in \tR$.

\begin{remark}
\label{rem:our-B}
In our case, if we realize $G$ as the fixed-point group of
a \K-involution $\sigma$ on $\SL_n$, then there is a natural
involution $\BB(\sigma)$ on
$\BB(\SL_n, \K) = \rBB(\GL_n, \K)$, the reduced building of
$\GL_n$ over \K,
such that $\BB(G, \K)$ may be identified (as a metric space)
with the $\BB(\sigma)$-fixed points in $\rBB(\GL_n, \K)$.
See \cite{kim-moy:involutions}*{Theorem 6.7.3}.
Further, if $x \in \BB(G, \K)$ and $t \in \tR_{> 0}$, then
$G(\K)_{x, t} = G(\K) \cap \GL_n(\K)_{x, t}$.
\end{remark}

If $t \in \tR$,
then we write
$\fg(\K)_t = \bigcup_{x \in \BB(G, \K)} \fg(\K)_{x, t}$,
$\fg^*(\K)_t = \bigcup_{x \in \BB(G, \K)} \fg^*(\K)_{x, t}$,
and, if $t \ge 0$,
$G(\K)_t = \bigcup_{x \in \BB(G, \K)} G(\K)_{x, t}$.
If $u \in \tR$ with $t \le u$, then we write
$\fg(\K)_{x, t:u} = \fg(\K)_{x, t}/\fg(\K)_{x, u}$,
$\fg^*(\K)_{x, t:u} = \fg^*(\K)_{x, t}/\fg^*(\K)_{x, u}$,
and, if $t \ge 0$,
$G(\K)_{x, t:u} = G(\K)_{x, t}/G(\K)_{x, u}$.

If $X \in \fg(\K)$, $g \in G(\K)$, and $x \in \BB(G, \K)$,
then we write
\begin{itemize}
\item
$\depth_x(X) = t$ if $t \in \R$ satisfies
$X \in \fg(\K)_{x, t} \setminus \fg(\K)_{x, t^+}$,
and $\depth_x(X) = \infty$ if $X = 0$;
\item
$\depth(X)
= \sup \set{\depth_x(X)}{x \in \BB(G, \K)}$;
\item
$\depth_x(g) = t$ if $t \in \R_{\ge 0}$ satisfies
$g \in G(\K)_{x, t} \setminus G(\K)_{x, t^+}$,
and $\depth_x(g) = \infty$ if $g = 1$;
and
\item
$\depth(g) = \sup \set{\depth_x(g)}{x \in \BB(G, \K), g\in G(\K)_x}$.
\end{itemize}
Notice that we do not define $\depth_x(g)$ if
$g \not\in G(\K)_{x, 0}$, and do not define
$\depth(g)$ if $g \not\in G(\K)_0$.
We call $\depth(X)$ (respectively, $\depth_x(X)$) the
\term{depth} (respectively, \term{$x$-depth}) of $X$, and
similarly for group elements.

Moy and Prasad also define analogues of these objects
when the ground field is changed from \K to an algebraic extension
with finite ramification degree (for example, $\K\unram$);
we will use the obvious notation for these objects.

By definition, an element $g \in G(\K)$ is \term{compact} if the
closed subgroup that it generates is compact.  This is
equivalent to its orbits in $\BB(G, \K)$ being bounded
\cite{bruhat-tits:reductive-groups-1}*{(4.4.9)}, hence, by
\cite{bruhat-tits:reductive-groups-1}*{Proposition 3.2.4},
to $g$ possessing a fixed point in $\BB(G, \K)$.

Let \mexp be the Cayley transform
$X \mapsto (1 + X)(1 - X)\inv$, which identifies the
topologically nilpotent set in $\fg(\K\unram)$ with the
topologically unipotent set in $G(\K\unram)$.
We have that
\begin{itemize}
\item the family of restrictions of \mexp
to the filtration lattices $\fg(\K\unram)_{x, 0^+}$ satisfy
\cite{adler-debacker:mk-theory}*{Property (CE1)}, so that,
for every
$x \in \BB(G, \K\unram)$
and
pair $t, u \in \tR_{> 0}$ with $2 t \ge u$, we have that
\mexp induces an isomorphism
$\mexp_{x, t:u} : \fg(\K\unram)_{x, t:u}
	\to G(\K\unram)_{x, t:u}$;
\item\label{exp-equiv}
the resulting isomorphisms satisfy the
equivariance property
\[
\mexp_{g\dota x, t:u} \circ \Ad(g)
= \Int(g) \circ \mexp_{x, t:u}
\]
for all $x$, $t$, and $u$ as above, and all
$g \in G(\K\unram)$;
and
\item\label{exp-rvalue}
if $T$ is a maximal $\K\unram$-torus in $G$,
$\alpha$ is an absolute root of $T$ in $G$,
$t \in \R_{> 0}$,
$X \in \mf t(\K\unram)_{0^+}$,
and $\ord(d\alpha(X)) = t$,
then
\[
\ord_\K\bigl(
	d\alpha(X)
	- (\alpha(\mexp(X)) - 1)
\bigr) > t.
\]
\end{itemize}
With the notation of Remark \ref{rem:our-B}, we have that
$\mexp(X) \equiv 1 + 2X \pmod{\GL_n(E)_{x, 2t}}$
whenever $x \in \BB(G, E)$, $t \in \tR_{> 0}$, and
$X \in \fg(E)_{x, t}$. When the residual
characteristic of \K is odd, the first and third facts follow from
easily verified facts about the behaviour of the map
$X \mapsto 1 + X$, since
$\GL_n(E)_{x, 2t} \cap G(E) = G(E)_{x, 2t}$.
The second fact is obvious from the definition.

We will usually be concerned only with the behaviour of
these maps on \K-rational points, but the fact that we can
work with them over $\K\unram$ comes in handy when we
discuss stable conjugacy.

\section{Restricted representations}

In this section, we define the class of representations of $G(\K)$ that
we will consider.

\subsection{Restricted elements}\label{sub:restricted}

We recall a notion introduced in
\cite{adler:thesis}*{Definition 2.2.4}.

\begin{definition}
\label{defn:good}
If $G$ is any connected, reductive group defined over a
local field $\K$, and $\fg$ is the Lie algebra of $G$,
then an element $X \in \fg(\K)$
(respectively, $X^*\in\fg^*(\K)$)
is \term{good of depth $r$} if
\begin{enumerate}
\item\label{defn:good:tame}
there is a maximal $\K$-torus $T$
in $G$, with Lie algebra \ft, that splits over $\K\tame$;
\item\label{defn:good:depth}
$X \in \ft(\K)_r \setminus \ft(\K)_{r^+}$
(respectively, $X^* \in \ft^*(\K)_r \setminus \ft^*(\K)_{r^+}$);
and
\item\label{defn:good:root-value}
for each (absolute) root (respectively, coroot) $\alpha$ of $T$ in $G$,
$d\alpha(X)$ is $0$, or has \K-normalised valuation $r$.
\end{enumerate}
\end{definition}

\begin{remark}
\label{rem:good-depth}
Preserve the notation of Definition \ref{defn:good}.
By \cite{adler-debacker:bt-lie}*{Corollary 3.5.6} (or
its obvious analogue for the dual Lie algebra),
Definition \ref{defn:good}\pref{defn:good:depth} implies that
$r$ is the depth of $X$ in $\fg(\K)$
(respectively, of $X^*$ in $\fg^*(\K)$),
thus justifying the terminology.
For the groups $G$ that we consider, the indices
in the character lattice of $T$
	of the root lattice of $T$ in $G$,
and
in the cocharacter lattice of $T$
	of the coroot lattice of $T$ in $G$,
divide $4$.
Recall that $p \ne 2$.
Thus, by the definition of the filtration on $\ft(\K)$
(see \cite{adler:thesis}*{p.~9}),
if $X$ (respectively, $X^*$) satisfies
Definition \ref{defn:good}\pref{defn:good:root-value}, then
it automatically lies in $\ft(\K)_r$
(respectively, $\ft^*(\K)_r$);
and, in fact,
	it equals $0$
	or
	satisfies Definition \ref{defn:good}\pref{defn:good:depth}.
\end{remark}

The following notion was introduced in
\cite{cunningham-hales:good}*{Definition 2.4},
where the term `slope' was used in place of `depth'.

\begin{definition}\label{defn:rsrg}
Let $r$ be a rational number and $\fg$ a classical Lie algebra defined over $\K$.
An element $X$ of $\fg(\K)$
is \term{restricted of depth $r$} in $\fg(\K)$
if it satisfies the following conditions:
\begin{enumerate}
\item\label{defn:rsrg:good}
it is good of depth $r$
(in the sense of Definition \ref{defn:good});
\item\label{defn:rsrg:rss} 
it is regular;
\item\label{defn:rsrg:slope} 
each eigenvalue
is $0$, or has \K-normalised valuation $r$;
and
\item\label{defn:rsrg:multiplicity} 
the multiplicity of the
eigenvalue $0$ is at most $1$.
\end{enumerate}
The set of elements of $\fg(\K)$ that are restricted of
depth $r$ is denoted by $\fg(r,\K)$. 
\end{definition}

\begin{remark}
Note that restricted elements are \afort regular semisimple.
Preserve the notation of Definition \ref{defn:rsrg}.

For \fg a symplectic Lie algebra, if
$\lambda$ is an eigenvalue of $X$, then $2\lambda$ is a root
value for $X$.  Accordingly, in odd characteristic,
Definition \ref{defn:rsrg}\pref{defn:rsrg:rss} implies
Definition \ref{defn:rsrg}\pref{defn:rsrg:multiplicity}; and
Definition \ref{defn:rsrg}\pref{defn:rsrg:slope} implies that,
\emph{if} $X \ne 0$ is good of depth $r$, then it is restricted of 
depth $r$.

For \fg a special orthogonal Lie algebra (even or odd), if
$0$ is an eigenvalue of $X$ with multiplicity at least $2$,
then it is also a root value of $X$.  Accordingly, Definition
\ref{defn:rsrg}\pref{defn:rsrg:rss} implies Definition
\ref{defn:rsrg}\pref{defn:rsrg:multiplicity}.

For \fg an \emph{odd} orthogonal Lie algebra, if $\lambda$ is a
non-\fto{$0$}{zero} eigenvalue of $X$, then it is also a root value of
$X$.  Accordingly, Definition \ref{defn:rsrg}\pref{defn:rsrg:good} implies
Definition \ref{defn:rsrg}\pref{defn:rsrg:slope}.


That is, an element of a symplectic or odd special orthogonal Lie
algebra is restricted of depth $r$ if and only if it is
regular, and good of depth $r$.
This implication fails for even special orthogonal Lie algebras.
\end{remark}

For a given classical Lie algebra $\fg$,
the set of $r\in \Q$ for which $\fg(r,\K)$ is non-empty is independent of $\K$, for sufficiently large residual characteristic.

\subsection{Thickened orbits}
\label{sub:thick}

We recall \cite{cunningham-hales:good}*{Definition 2.5},
which introduces
an equivalence relation on the class of
restricted elements of fixed depth in a Lie algebra.

If $\fg$ is a symplectic or odd special orthogonal Lie algebra,
then two restricted elements $X$ and $X'$ of depth $r$ are
\term{$r$-equivalent} if the multi-sets of eigenvalues
$\set{\lambda_i}{1 \le i \le n_G}$ of $X$
and
$\set{\lambda_i'}{1 \le i \le n_G}$ of $X'$ can be indexed so that
\begin{equation}
\tag{$*$}
\forall 1 \le i \le n_G,\quad   \ord_\K(\lambda_i'-\lambda_i) > r.
\end{equation}
If $\fg$ is
an  even special orthogonal Lie algebra $\fso(2N)$
(so that $n_G = 2N$, in the notation of \S\ref{sec:hyps}),
then we say that $X, X' \in \fg(r,\K)$ are \term{$r$-equivalent} if ($*$)
holds, and, in addition,
\[
   \ord_\K(\operatorname{pfaff}(JX) - \operatorname{pfaff}(J X')) > N r,
\]
where $J$ is as in \eqref{eq:J-orth}
and, for a skew-symmetric matrix $Y$,
$\operatorname{pfaff}(Y)$ is the Pfaffian of $Y$.
 
In all cases, when $X$ is an element of $\fg(r,\K)$, the 
$r$-equivalence class of $X$ is denoted by $[X]_r$.

The above definition is somewhat \emph{ad hoc} and
particular to our situation.  We present below an equivalent
definition that will generalize readily to other Lie algebras.

\begin{definition}\label{definition: thick}
Suppose $X\in \fg(\K)$ is regular semisimple and the depth of $X$ is $r$. 
\begin{enumerate}
\item We define
\[
\mathcal{O}_r(X) \ceq \bigcup_{Y\in \ft(\K)_{r^+}}  \mathcal{O}(X+Y),
\]
where $\ft(\K)$ is the Cartan subalgebra containing $X$ and 
where $\mathcal{O}(X+Y)$ is the $G(\K)$-adjoint-orbit of $X+Y$ in 
$\fg(\K)$. We refer to $\mathcal{O}_r(X)$ as the {\bf{thickened orbit}} of $X$.
\item We define
\[
\mathcal{O}_r^\st(X) \ceq \bigcup_{Y\in \ft(\K)_{r^+}}  \mathcal{O}^\st(X+Y),
\]
where $\ft(\K)$ is the Cartan subalgebra containing $X$. We refer to $\mathcal{O}_r^\st(X)$ as the {\bf{stable thickened orbit}} of $X$.
\end{enumerate}
\end{definition}

%

Lemma~\ref{lemma: thick} assures us that the latter notion is the proper way to understand $r$-equivalent elements in $\fg(r,\K)$.

\begin{lemma}[\cite{cunningham-hales:good}*{Theorem 4.6}]\label{lemma: thick}
For each $X\in \fg(r,\K)$, $\mathcal{O}_r^\st(X) = [X]_r$.
\end{lemma}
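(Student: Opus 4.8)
The plan is to show both inclusions $\mathcal{O}_r^\st(X) \subseteq [X]_r$ and $[X]_r \subseteq \mathcal{O}_r^\st(X)$ by translating everything into the language of eigenvalue multi-sets (and, in the even orthogonal case, Pfaffians), since stable conjugacy of regular semisimple elements in a classical Lie algebra is detected precisely by such invariants. First I would record the relevant linear-algebra dictionary: for $G = \Sp_{2N}$ or $\SO_n$, two regular semisimple elements of $\fg(\K)$ are stably conjugate if and only if they have the same characteristic polynomial, \emph{except} in the even orthogonal case, where one must additionally match the Pfaffian of $JX$ (the two $\SO(2N)$-orbits inside an $\mathrm{O}(2N)$-orbit being distinguished exactly by the sign, hence by the value, of $\operatorname{pfaff}(JX)$). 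This is the standard parametrization of stable classes in classical Lie algebras, and I would either cite it or give the short argument via the companion-matrix normal form relative to the form $J$.

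Next, for the inclusion $\mathcal{O}_r^\st(X) \subseteq [X]_r$: take $X' \in \mathcal{O}^\st(X+Y)$ for some $Y \in \ft(\K)_{r^+}$, where $\ft$ is the Cartan containing $X$. Since $X$ is good of depth $r$ and regular, $X + Y$ has the same Cartan and, by the dictionary above, $X'$ has the same eigenvalue multi-set as $X + Y$ (and same $\operatorname{pfaff}(J\cdot)$ in the even orthogonal case). So it suffices to compare the eigenvalues of $X+Y$ with those of $X$. Here I would use that $Y \in \ft(\K)_{r^+}$ forces each eigenvalue of $Y$ (an eigenvalue being, up to the bounded index dividing $4$ and $p \ne 2$, controlled by root/coroot values as in Remark~\ref{rem:good-depth}) to have $\K$-normalised valuation strictly greater than $r$; since $X$ and $Y$ commute and lie in the same split-over-$\K\tame$ torus, the eigenvalues of $X+Y$ are sums $\lambda_i + (\text{something of valuation} > r)$, giving ($*$). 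For the even orthogonal case I would additionally estimate $\operatorname{pfaff}(J(X+Y)) - \operatorname{pfaff}(JX)$: expanding the Pfaffian (a polynomial of degree $N$) and using that $X$ contributes valuation $r$ in each of the $N$ slots while the perturbation contributes valuation $> r$ in at least one slot, one gets valuation $> Nr$, which is exactly the extra condition in the definition of $r$-equivalence.

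For the reverse inclusion $[X]_r \subseteq \mathcal{O}_r^\st(X)$: given $X' \in \fg(r,\K)$ with eigenvalues satisfying ($*$) (plus the Pfaffian condition when relevant), I want to produce $Y \in \ft(\K)_{r^+}$ with $X'$ stably conjugate to $X + Y$. The idea is to first move $X'$, by a stable conjugation, into the torus $\ft$: since $X'$ is regular semisimple with the ``same shape'' of characteristic polynomial as $X$ (its nonzero eigenvalues pair up in the way forced by $J$, and the multiplicity of $0$ is at most $1$), its centralizer is a torus of the same type as $\ft$, and one can arrange $X' \in \ft(\K)$ after stable conjugacy; this is where I would be careful, because matching the \emph{isomorphism type} of the torus over $\K$ (not just $\Kalg$) is needed to land genuinely in $\ft(\K)$, and this is exactly why Definition~\ref{defn:rsrg} imposes the eigenvalue conditions \pref{defn:rsrg:slope} and \pref{defn:rsrg:multiplicity} — they guarantee the two tori are $\K$-isomorphic. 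Then set $Y = X' - X \in \ft(\K)$; condition ($*$) says every eigenvalue of $Y$ has valuation $> r$, and by the converse direction of Remark~\ref{rem:good-depth} (valuation of root values versus the filtration, using $p \ne 2$ and index dividing $4$) this forces $Y \in \ft(\K)_{r^+}$, so $X' \in \mathcal{O}^\st(X+Y) \subseteq \mathcal{O}_r^\st(X)$.

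The main obstacle I anticipate is the ``move $X'$ into $\ft(\K)$ by stable conjugacy'' step in the reverse inclusion: one must check that the centralizer torus of $X'$ is isomorphic \emph{over $\K$} (not merely stably, and not merely over the algebraic closure) to $\ft$, so that there is an honest element of $G(\K\tame)$, trivial modulo the image of $\Gal(\K\tame/\K)$-cohomology appropriately, conjugating one torus to the other; tracking the $0$-eigenvalue (multiplicity $\le 1$) and the pairing structure of the nonzero eigenvalues under $J$ is precisely what makes this work, and I would present that case analysis (symplectic; odd orthogonal; even orthogonal, where the Pfaffian sign enters) carefully. Since the statement is quoted as \cite{cunningham-hales:good}*{Theorem 4.6}, I would in practice defer the heavy lifting to that reference, but the sketch above is how I would reconstruct it: eigenvalue/Pfaffian dictionary for stable classes, then two valuation estimates (one for each inclusion) built on Remark~\ref{rem:good-depth} and $p \ne 2$.
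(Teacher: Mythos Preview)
Your proposal is correct in outline and identifies the right obstacle. The paper's own proof is simply a deferral to \cite{cunningham-hales:good}*{Theorem 4.6} together with a forward reference to the proof of Proposition~\ref{prop: B_g,r}(1), so there is not much to compare against directly. Where the paper does elaborate (in Proposition~\ref{proposition:W&r-equivalence} and the proof of Proposition~\ref{prop: B_g,r}(1)), it works through Waldspurger's parametrization of regular semisimple orbits by data $(I, I^\ast, (F_i/F_i^\#), (a_i), (c_i))$: the condition $[X]_r = [X']_r$ is translated into the statement that the $F_i$ match and the $a_i$ agree in valuation and angular component, which in turn forces the centralizing tori to be stably conjugate. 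Your eigenvalue/Pfaffian language is an equivalent encoding of the same information (the $a_i$ are essentially the eigenvalues and the $F_i$ the fields they generate), so the two routes are really the same argument in different notation.

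The step you flag as the main obstacle---showing that the centralizer of $X'$ is $\K$-isomorphic to $\ft$, so that a stable conjugate of $X'$ genuinely lands in $\ft(\K)$---is exactly the content borrowed from \cite{cunningham-hales:good}, and is phrased there (and in Proposition~\ref{proposition:W&r-equivalence}\pref{Wdatum3}) as matching the extension data $F_i/F_i^\#$. One small refinement you should make explicit: once you have moved $X'$ into $\ft(\K)$, you still need to act by a suitable Weyl-group element so that the eigenvalues of the moved $X'$ line up with those of $X$ in the order witnessing $(\ast)$, before concluding $Y = X' - X \in \ft(\K)_{r^+}$. This is straightforward but should not be left implicit.
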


\begin{proof}
The proof of this lemma is contained in the proof of 
\cite{cunningham-hales:good}*{Theorem 4.6}.
We explain this in detail in the proof of Proposition \ref{prop: B_g,r}(1)
below,  where we actually prove a more precise statement about 
thickened orbits ({\it vs.}\ stable thickened orbits).
\end{proof}

\subsection{Thickened orbits and orbital integrals}
In \cite{cunningham-hales:good} it is shown how to recognize the stable thickened orbit $\mathcal{O}_r^\st(X)=[X]_r$ as a $\rf_\K$-point on a scheme $S_{\fg,r}$.
As we will see, one of our main tasks in this paper
is to parameterize the thickened orbits of $r$-restricted elements
by the residue-field points of an appropriate object.
To do that, we will use the parameterization of {\emph {stable}} thickened orbits from \cite{cunningham-hales:good}.
In the meantime, the following proposition reveals the relation between thickened orbits of $r$-restricted elements and orbital integrals on $\Hecke_r(\fg(\K))$ (defined below).

\begin{definition}\label{defn: Hecke_r}
Put
\[
\Hecke_r(\fg(\K)) = \sum_{x \in \BB(G,\K)} C_c(\fg(\K)/\fg(\K)_{x, r^+}).
\]
That is, $\Hecke_r(\fg(\K))$
is the space of functions $f \in \hecke(\fg(\K))$ for which there are some $s\leq r$ and a finite set $\{ x_i \tq i\in I\} \subseteq \BB(G,\K)$ such that $f = \sum_{i\in I} f_i$ where $f_i \in C(\fg(\K)_{x_i,s:r^+}$).
\end{definition}

\begin{proposition}\label{prop: separates}
Fix $r\in \Q$. Suppose $X, X'\in \fg(r,\K)$. Let $\mu_X : \hecke(\fg(\K)) \to \C$ (respectively, $\mu_{X'} : \hecke(\fg(\K)) \to \C$) be the orbital integral distribution attached to $X$ (respectively, $X'$). Then 
\[
\mathcal{O}_r(X)= \mathcal{O}_r(X') \iff \forall f\in \Hecke_r(\fg(\K)),\ \mu_X(f) = \mu_{X'}(f).
\]
\end{proposition}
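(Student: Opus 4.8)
The plan is to reduce the equivalence to a statement about integrating against characteristic functions of cosets $\fg(\K)_{x,s:r^+}$, which span $\Hecke_r(\fg(\K))$ by Definition~\ref{defn: Hecke_r}, and then to recognize such an integral as the measure (with respect to a suitable Haar measure on the orbit) of the preimage of a coset in $\fg(\K)/\fg(\K)_{x,r^+}$. The key observation is that a function $f \in \Hecke_r(\fg(\K))$ is, by construction, invariant under translation by $\fg(\K)_{x,r^+}$ for the relevant $x$; hence $\mu_X(f)$ depends only on the image of $\mathcal{O}(X+Y)$ in $\fg(\K)/\fg(\K)_{x,r^+}$ as $Y$ ranges over $\ft(\K)_{r^+}$—that is, only on the image of the \emph{thickened} orbit $\mathcal{O}_r(X)$. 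This gives the implication $(\Rightarrow)$ essentially immediately: if $\mathcal{O}_r(X) = \mathcal{O}_r(X')$, then for each $x$ the two thickened orbits have the same image in every quotient $\fg(\K)/\fg(\K)_{x,r^+}$, and the orbital integrals of any $\fg(\K)_{x,r^+}$-invariant compactly supported function agree (one must check the Haar measures on the orbits are normalized compatibly, which follows since $X, X'$ lie in $\fg(r,\K)$ and one uses the standard normalization of orbital integrals).

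For the converse $(\Leftarrow)$, I would argue contrapositively. Suppose $\mathcal{O}_r(X) \neq \mathcal{O}_r(X')$; without loss of generality there is a point $Z \in \mathcal{O}_r(X) \setminus \mathcal{O}_r(X')$. Since $\mathcal{O}_r(X) = \bigcup_{Y \in \ft(\K)_{r^+}} \mathcal{O}(X+Y)$ is a union of finitely many closed orbits (finiteness because $\ft(\K)_{r^+}/\ft(\K)_{r^+}$-type considerations reduce the union to finitely many orbits modulo the ambient lattice, and restricted elements are regular semisimple with closed orbits), and similarly for $X'$, I want to separate $Z$ from $\mathcal{O}_r(X')$ by an element of $\Hecke_r(\fg(\K))$. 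Pick $x \in \BB(G,\K)$ with $Z \in \fg(\K)_{x, s}$ for some $s \le r$ (possible since $\fg(\K) = \bigcup_x \fg(\K)_{x,s}$ for $s$ small enough, after translating). The image of $Z$ in $\fg(\K)/\fg(\K)_{x, r^+}$ should be separated from the image of $\mathcal{O}_r(X')$; because both images are \emph{finite} sets in the quotient (the thickened orbit of a restricted element meets only finitely many cosets mod $\fg(\K)_{x,r^+}$), for $x$ chosen appropriately—and here one may need to pass to a point $x$ on which $X$ and $X'$ both have good behaviour, e.g.\ a vertex in the building of the torus—one can arrange that $Z + \fg(\K)_{x,r^+}$ is disjoint from $\mathcal{O}_r(X')$. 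Then $f = \mathbf{1}_{Z + \fg(\K)_{x,r^+}} \in \Hecke_r(\fg(\K))$ satisfies $\mu_X(f) > 0 = \mu_{X'}(f)$, contradiction.

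The main obstacle I anticipate is the finiteness/disjointness step in the converse: showing that for a \emph{suitable} choice of $x$, the images of the two thickened orbits in $\fg(\K)/\fg(\K)_{x,r^+}$ are finite sets that one can separate coset-by-coset. The subtlety is that a single orbit $\mathcal{O}(X+Y)$ is noncompact, so its image in the quotient need not be finite for an arbitrary $x$; one must exploit that $X+Y$ is regular semisimple of depth $r$, so that, after conjugating, its orbit intersects $\fg(\K)_{x,r}$ in a way controlled by the centralizer—a compactness argument on the torus, combined with the fact (Remark~\ref{rem:our-B}, and the Moy--Prasad theory) that $\fg(\K)_{x,r}/\fg(\K)_{x,r^+}$ is finite-dimensional over $\rf_\K$. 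Concretely I would fix a maximal torus $T$ containing $X$ and a point $x$ in the image of $\BB(T,\K)$ in $\BB(G,\K)$, use that the orbit of a depth-$r$ regular semisimple element meets $\fg(\K)_{x,r} \setminus \fg(\K)_{x,r^+}$ in finitely many $G(\K)_x$-orbits, and push this through the quotient; the thickening by $\ft(\K)_{r^+}$ does not change the image mod $\fg(\K)_{x,r^+}$ since $\ft(\K)_{r^+} \subseteq \fg(\K)_{x,r^+}$ for $x \in \BB(T,\K)$. That containment is the crux that makes the argument go through cleanly, and I would isolate it as the first lemma.
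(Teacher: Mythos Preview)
Your plan has a genuine gap in the $(\Rightarrow)$ direction, and your $(\Leftarrow)$ direction is overcomplicated because you are missing the key external input.

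For $(\Rightarrow)$: you claim that if $f$ is $\fg(\K)_{x,r^+}$-invariant then $\mu_X(f)$ depends only on the image of the thickened orbit in $\fg(\K)/\fg(\K)_{x,r^+}$, and that this ``gives the implication essentially immediately''. It does not. Write $X' = X + Y$ with $Y \in \ft(\K)_{r^+}$. Then
\[
\mu_{X'}(f) = \int_{G(\K)/T(\K)} f\bigl(\Ad(g)X + \Ad(g)Y\bigr)\,dg,
\]
and for this to equal $\mu_X(f)$ you would need $\Ad(g)Y \in \fg(\K)_{x,r^+}$ for (almost) all $g$. But $Y \in \fg(\K)_{x_0,r^+}$ only for $x_0$ in the image of $\BB(T,\K)$, so $\Ad(g)Y \in \fg(\K)_{g\cdot x_0, r^+}$, which has no reason to lie in $\fg(\K)_{x,r^+}$ for arbitrary $g$ and the arbitrary $x$ coming from the decomposition of $f$. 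Knowing the \emph{image} of the orbit modulo $\fg(\K)_{x,r^+}$ tells you nothing about the pushforward \emph{measure}. The paper does not try to argue this way: after reducing to $X' \in X + \ft(\K)_{r^+}$ it invokes \cite{cunningham-hales:good}*{Theorem 1.26 and Corollary 1.30}, together with the equality $|D^{\fg,\mf l}(X)| = |D^{\fg,\mf l}(X')|$ for every relevant Levi $\mf l$, to conclude $\mu_X = \mu_{X'}$ on $\Hecke_r(\fg(\K))$. That descent/reduction formula is the real content here and cannot be replaced by the set-theoretic observation you propose.

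For $(\Leftarrow)$: your instinct to test with a characteristic function of a coset is correct, but there is no need to choose an arbitrary $Z \in \mathcal O_r(X)\setminus \mathcal O_r(X')$ or to worry about finiteness of images. Take $Z = X$ itself and $x$ in the image of $\BB(T,\K)$. The crucial fact you are missing is \cite{adler:thesis}*{Lemma 2.3.2}: for $X$ good of depth $r$ and $x$ as above, the $G(\K)$-orbit of $X'$ meets $X + \fg(\K)_{x,r^+}$ if and only if it meets $X + \ft(\K)_{x,r^+}$. This lemma converts ``$\mu_{X'}(\mathbf 1_{X+\fg(\K)_{x,r^+}}) \ne 0$'' directly into ``some conjugate of $X'$ lies in $X + \ft(\K)_{r^+}$'', i.e.\ $\mathcal O_r(X) = \mathcal O_r(X')$. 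With this in hand the finiteness and separation issues you anticipate simply evaporate.
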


\begin{proof}
Suppose $\mu_{X}(f) = \mu_{X'}(f)$ for all $f\in \Hecke_r(\fg(\K))$. 
Let $T$ be the centraliser of $X$ in $G$ and set $\ft = \Lie T$.
Suppose $x\in \BB(G,\K)$ lies in the image of $\BB(T,\K)$. (To make sense of this one must observe that $T$ is tamely ramified, since $X\in \fg(r,\K)$.)
Let ${\bf 1}_{X+\fg(\K)_{x,r^+}}$ be the characteristic function of  $X+\fg(\K)_{x,r^+}$. Then ${\bf 1}_{X+\fg(\K)_{x,r^+}} \in \Hecke_r(\fg(\K))$, 
so \[\mu_{X}({\bf 1}_{X+\fg(\K)_{x,r^+}}) = 
\mu_{X'}({\bf 1}_{X+\fg(\K)_{x,r^+}}).\] 
Clearly $\mu_{X}({\bf 1}_{X+\fg(\K)_{x,r^+}})$ is non-zero. 
On the other hand, $\mu_{X'}({\bf 1}_{X+\fg(\K)_{x,r^+}})$ is $0$ unless 
the $G(\K)$-adjoint orbit of $X'$ 
intersects $X + \fg(\K)_{x,r^+}$. 
By \cite{adler:thesis}*{Lemma 2.3.2}, this is true if and
only if the $G(\K)$-adjoint orbit of $X'$ intersects $X + \ft(\K)_{x, r^+}$, in which case $\mc O_{r}(X) = \mc O_{r}(X')$.

Conversely, suppose $\mathcal{O}_r(X)= \mathcal{O}_r(X')$. Because orbital integrals are $G(\K)$-invariant, we may assume $X'\in X+\ft(\K)_{r^+}$, where $\ft(\K)$ is the Cartan subalgebra of $\fg(\K)$ containing $X$, as above.  Let $x$ be as above and observe that $\ft(\K)_{r^+} \subseteq \fg(\K)_{x,r^+}$. Then the image of $X$ under $\fg(\K)_{x,r} \to \fg(\K)_{x,r:r^+}$ coincides with the image of $X'$ under the same function. Arguing as in the proof of \cite{cunningham-hales:good}*{Corollary 1.30}, it follows from \cite{cunningham-hales:good}*{Theorem 1.26}
and the fact that
$|D^{\fg, \mf l}(X)| = |D^{\fg, \mf l}(X')|$
for any Levi \K-subalgebra \mf l of \fg containing
$C_\fg(X) = C_\fg(X')$
(see \cite{cunningham-hales:good}*{(6.1.3)})
that $\mu_X(f) = \mu_{X'}(f)$ for every $f\in \Hecke_r(\fg(\K))$.
\end{proof}


\subsection{$r$-equivalent representations}\label{sub:r-equiv.reps}

In parallel to the definition of equivalence of elements, we
have a definition of equivalence of representations.

Recall that Moy and Prasad have defined the \term{depth} of
a representation $\pi$ of $G(\K)$ to be the least index $r$ such
that, for some $x \in \BB(G, \K)$, the space of $G\ins{(\K)}_{x, r^+}$-fixed 
vectors in $\pi$ is non-\fto{$0$}{zero}
(see \cite{moy-prasad:k-types}*{Theorem 5.2}).
We write $\depth(\pi)$, rather than $\varrho(\pi)$, for this
depth.


In this paper, we have occasion to distinguish carefully between
the distribution character of a representation of $G(\K)$ and
its ``density'' 
function on $G(\K)\reg$.
If the field $\K$ has characteristic zero, then it is known
(see Harish-Chandra's theorem \cite{hc:harmonic}) that 
there exists a {\it locally summable} density function $\theta_{\pi}$, 
that is locally constant on $G(\K)\reg$, such that
\begin{equation}\label{eq:HC}
\Theta_{\pi}(f)=\int_{G(\K)}\theta_{\pi}(g)f(g)\,dg
\end{equation}
for {\it all} test functions $f\in C_c^{\infty}(G(\K))$.
However, for fields $\K$ of positive characteristic, it is only 
known that a locally constant function 
$\theta_{\pi}$ on $G(\K)\reg$ exists such that equality (\ref{eq:HC}) 
holds for all test functions
$f$ whose support is contained in $G(\K)\reg$, and it is in general not known 
that this function $\theta_{\pi}$ is locally summable on $G(\K)$. See 
\cite{adler-korman:loc-char-exp}*{Appendix} for a 
discussion of this question, and further references.

We will always
denote 
the distribution character by $\Theta_{\pi}$, and the function
by $\theta_{\pi}$.
By ``Harish-Chandra character'', or just the word
``character'' without modification,
we will always mean the 
function $\theta_{\pi}$. Note, however, that for the groups we are considering, and 
for the fields of positive characteristic, it is only conjectural at this point that 
$\theta_{\pi}$ contains as much information as the distribution character $\Theta_{\pi}$. 
In particular, there is no proof at present 
that if two representations of a general $p$-adic group have the same character
function $\theta_{\pi}$, then their distribution characters coincide.

\begin{definition}\label{def: r-equiv.reps}
Let $\pi$ and $\pi'$ be two smooth, irreducible representations of
$G(\K)$, and assume that $\pi$ has depth $r$. 
We say that $\pi$ and $\pi'$ are
\term{$r$-equivalent} if the restrictions of their characters $\theta_{\pi}$ and 
$\theta_{\pi'}$
to the set of regular, semisimple elements in $G(\K)_r$
coincide.
\end{definition}

While $r$-equivalence in the above sense is obviously weaker
than the usual equivalence of representations (in the sense
of the existence of an invertible intertwining map), it
still preserves some interesting information about a
representation.
For example, two $r$-equivalent representations have the same local
character expansion, hence the same wave-front set
(see \cite{kawanaka:shintani}); in particular, both are, or
both are not, generic
(see \cite{rodier:whittaker-chars}*{%
	Th\'eor\`eme, p.~161
	and
	Remarque 2, pp.~162--163%
}),
and both have the same formal degree.
Since a representation has depth $r$ if and only if there is
some $x \in \BB(G, \K)$ such that the restriction of its
character to $G(\K)_{x, r^+}$ is not orthogonal to $1$ (for
the usual scalar product of functions on a compact group),
$r$-equivalent representations have the same depth.
There is also known to be interesting arithmetic information
encoded in the behaviour at depth $r$ of the character of a
depth-$r$, supercuspidal representation
of a general linear group; see, for example,
\cite{corwin-moy-sally:gll}*{Theorem 4.2(d)},
\cite{takahashi:gl3}*{Proposition 2.9(2)},
and
\cite{takahashi:gll}*{Theorem 2.5}.
All this information, too, is preserved by $r$-equivalence.

\subsection{The Fourier transform}\label{sub: test functions} 

Let $\Lambda$ be an additive character of $\K$ that is
non-trivial on $\mc O_\K$, but trivial on
its unique prime ideal.
We use $\Lambda$ to define a Fourier transform on the space
$\hecke(\fg(\K))$ of compactly supported, locally
constant, complex-valued functions on $\fg(\K)$ by putting
$$
\hat f(X^*)
= \int_{\fg(\K)} f(Y)\Lambda(\langle X^*, Y\rangle)dY
$$
for $f \in \hecke(\fg(\K))$ and $X^* \in \fg^*(\K)$, where
$\langle \cdot, \cdot \rangle$ is the standard pairing between 
$\fg^{\ast}(\K)$ and $\fg(\K)$.

\begin{lemma}[\cite{cunningham-hales:good}*{Lemma 1.8}]\label{lemma: Hecke_r}
With the choices made above, $\Hecke_r(\fg(\K))$ (see Definition~\ref{defn: Hecke_r}) is the space of functions $f \in \hecke(\fg(\K))$ such that the support of $\hat f$ is contained in $\fg^*(\K)_{-r}$.
\end{lemma}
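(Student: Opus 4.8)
The plan is to characterize $\Hecke_r(\fg(\K))$ via the support of the Fourier transform by a direct computation on the basic spanning functions. First I would recall that, by Definition~\ref{defn: Hecke_r}, $\Hecke_r(\fg(\K))$ is spanned by functions of the form $f = \mathbf 1_{Z + \fg(\K)_{x, r^+}}$, where $x \in \BB(G, \K)$ and $Z \in \fg(\K)$ (more precisely, $Z$ ranges over coset representatives for $\fg(\K)_{x, s}/\fg(\K)_{x, r^+}$ for some $s \le r$, but additivity of the Fourier transform lets us reduce to single cosets, and translation by $Z$ only multiplies $\hat f$ by a unitary character, so it does not change $\supp \hat f$). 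Thus it suffices to compute the Fourier transform of $\mathbf 1_{\fg(\K)_{x, r^+}}$ and show its support is exactly $\fg^*(\K)_{-r} = \fg^*(\K)_{x, -r}$ for that same $x$, and then to argue that an arbitrary $f$ with $\supp \hat f \subseteq \fg^*(\K)_{-r}$ lies in the span of such pieces.

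The key computation is standard Pontryagin duality for the compact open subgroup $\fg(\K)_{x, r^+}$ inside the (locally compact, self-dual under the pairing $\langle\cdot,\cdot\rangle$ composed with $\Lambda$) group $\fg(\K)$: the Fourier transform of the characteristic function of an open compact subgroup $L$ is, up to the positive constant $\vol(L)$, the characteristic function of its annihilator $L^\perp = \set{X^* \in \fg^*(\K)}{\Lambda(\langle X^*, Y\rangle) = 1 \ \forall Y \in L}$. So I would verify the duality statement $\bigl(\fg(\K)_{x, r^+}\bigr)^\perp = \fg^*(\K)_{x, (-r)}$, i.e.\ that the Moy--Prasad filtrations on $\fg$ and $\fg^*$ are dual to one another with the expected index reversal; this is part of the Moy--Prasad formalism (the filtration $\fg^*(\K)_{x, s}$ is \emph{defined} as the annihilator of $\fg(\K)_{x, (-s)^+}$, or an equivalent convention), so it is really a matter of matching conventions and checking the strict-vs-nonstrict inequality bookkeeping at the index $-r$ against $r^+$. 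Granting this, $\widehat{\mathbf 1_{\fg(\K)_{x, r^+}}} = \vol(\fg(\K)_{x, r^+}) \cdot \mathbf 1_{\fg^*(\K)_{x, -r}}$, whose support is contained in $\fg^*(\K)_{-r} = \bigcup_y \fg^*(\K)_{y, -r}$; this gives the forward inclusion $\Hecke_r(\fg(\K)) \subseteq \set{f}{\supp \hat f \subseteq \fg^*(\K)_{-r}}$.

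For the reverse inclusion, suppose $f \in \hecke(\fg(\K))$ with $\supp \hat f \subseteq \fg^*(\K)_{-r}$. Since $f$ is locally constant with compact support, it is a finite sum of characteristic functions of cosets $Z_j + \fg(\K)_{x_j, s_j}$ for various $x_j \in \BB(G, \K)$ and $s_j \in \tR$; the hypothesis on $\supp \hat f$ forces each relevant level $s_j$ to satisfy $s_j \ge r^+$ (equivalently $s_j > r$), because $\widehat{\mathbf 1_{Z_j + \fg(\K)_{x_j, s_j}}}$ is supported on the annihilator of $\fg(\K)_{x_j, s_j}$, which is contained in $\fg^*(\K)_{-r}$ precisely when $\fg(\K)_{x_j, s_j} \subseteq \fg(\K)_{x_j, r^+}$; replacing each such coset by a disjoint union of smaller cosets at level $r^+$ if necessary, we exhibit $f$ as an element of $\sum_x C_c(\fg(\K)/\fg(\K)_{x, r^+})$. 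The one point requiring care here is the passage from ``$f$ is locally constant, compactly supported'' to ``$f$ is a finite combination of indicator functions of Moy--Prasad cosets'': one uses that the lattices $\fg(\K)_{x, s}$, as $x$ ranges over (the finitely many faces meeting) the support and $s \to \infty$, form a neighbourhood basis of $0$, which follows from \cite{adler-debacker:bt-lie}. I expect the main obstacle to be precisely this bookkeeping with the half-open index $r^+$ and the dual index $-r$ — making sure the annihilator relation is stated with the inequalities in exactly the right (strict/non-strict) form so that the supports match on the nose rather than up to an $\epsilon$ — together with verifying that the implicit reduction to a single point $x$ in each summand is legitimate; both are routine given the Moy--Prasad duality, but easy to get off by an index.
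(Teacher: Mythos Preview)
The paper does not give its own proof of this lemma; it simply cites \cite{cunningham-hales:good}*{Lemma 1.8}. So there is no in-paper argument to compare against, and the question is just whether your sketch is sound.

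Your forward inclusion is fine: the Moy--Prasad duality $\bigl(\fg(\K)_{x,r^+}\bigr)^\perp = \fg^*(\K)_{x,-r}$ yields $\supp\widehat{\mathbf 1_{Z+\fg(\K)_{x,r^+}}}\subseteq\fg^*(\K)_{x,-r}\subseteq\fg^*(\K)_{-r}$, and linearity finishes it.

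The reverse inclusion, however, has a real gap. You decompose $f=\sum_j c_j\,\mathbf 1_{Z_j+\fg(\K)_{x_j,s_j}}$ and then assert that the condition $\supp\hat f\subseteq\fg^*(\K)_{-r}$ forces each $s_j>r$. That does not follow: the support hypothesis constrains only the \emph{sum} $\hat f$, and cancellation among the individual transforms can shrink the total support without any single $s_j$ being large. Nothing in your argument rules out, say, two coarse-level terms whose Fourier transforms agree outside $\fg^*(\K)_{-r}$ and cancel there.

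The standard repair is to argue on the dual side rather than on $f$ directly. Since $\supp\hat f$ is compact and contained in $\fg^*(\K)_{-r}=\bigcup_x\fg^*(\K)_{x,-r}$, it lies in a finite union $\bigcup_{i}\fg^*(\K)_{x_i,-r}$ of open compact lattices. Taking successive set differences gives a decomposition $\hat f=\sum_i g_i$ with each $g_i\in\hecke(\fg^*(\K))$ supported in $\fg^*(\K)_{x_i,-r}$. The inverse Fourier transform of $g_i$ is then invariant under $\bigl(\fg^*(\K)_{x_i,-r}\bigr)^\perp=\fg(\K)_{x_i,r^+}$ and compactly supported (because $g_i$ is locally constant), hence lies in $C_c\bigl(\fg(\K)/\fg(\K)_{x_i,r^+}\bigr)$. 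Summing over $i$ exhibits $f\in\Hecke_r(\fg(\K))$.
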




As mentioned in \cite{adler-spice:explicit-chars}*{\S1.1},
if $X^* \in \fg^*(\K)$ is semisimple (in the appropriate sense),
then there is a distribution
$\hat\mu_{X^*} : \hecke(\fg(\K)) \to \C$
such that, for $f \in \hecke(\fg(\K))$,
$\hat\mu_{X^*}(f)$ is the integral of $\hat f$ along the
$G(\K)$-orbit of $X^*$ with respect to a suitable $G(\K)$-invariant
measure.
By \cite{adler-debacker:mk-theory}*{Theorem A.1.2},
there exists a locally constant function $F$ on
$\fg(\K)\reg$ such that
$\hat\mu_{X^*}(f) = \int_{\fg(\K)} f(X)F(X)dX$
for all $f \in \hecke(\fg(\K)\reg)$.  We will also write
$\hat\mu_{X^*}$ for the function $F$.

\subsection{The representations we consider}\label{sub: JKdatum}
Recall that J.-K.~Yu's construction of supercuspidal
representations of $G(\K)$ (see \cite{yu:supercuspidal})
takes as parameters certain quintuples
$\Psi = (\vG, \vec\phi, \vec r, x, \pi_{-1})$.
Namely,
$\vG = (G^0, \dotsc, G^d = G)$,
$\vec\phi = (\phi_0, \dotsc, \phi_d)$,
and
$\vec r = (0 \le r_0 < \dotsb < r_{d - 1} \le r_d)$,
where, for any $0 \le i < d$,
\begin{enumerate}
\item $G^i$ is
the centraliser of a \K-anisotropic, $\K\tame$-split \K-torus in $G$;
\item $\phi_i$ is a
character of $G^i(\K)$ that is \term{$G^{i + 1}(\K)$-generic} of
depth $r_i$, in the sense of \cite{yu:supercuspidal}*{\S9};
\item $r_0 > 0$ or $d = 0$, and (if $d > 0$)
$r_{d - 1} < r_d = \depth(\phi_d)$ or $\phi_d = 1$;
\item $x$ is a vertex of $\BB(G^0, \K)$;
and
\item $\pi_{-1}$ is a depth-zero, supercuspidal representation of
$G^0(\K)$ that is compactly induced from a representation of the
stabiliser of $G(\K)_x$.
\end{enumerate}
Such a quintuple is called a \term{cuspidal datum} (or
sometimes a generic datum).
The ingredients $(\vec r, x)$ in the above datum are redundant
(see \cite{yu:supercuspidal}*{Theorem 3.1 and Lemma 3.3}),
but we find it convenient nonetheless to have
them available.
We will write $G^{i, \Psi}$ for $G^i$,
$\phi_{i, \Psi}$ for $\phi_i$,
and similar notation for the other ingredients, as
necessary.
Write $\pi_\Psi$ for the representation constructed from $\Psi$
in \cite{yu:supercuspidal}*{\S4}.
Then $\depth(\pi_\Psi) = r_d$.
We will also call $r_d$ the depth of the datum $\Psi$, and
write $\depth(\Psi) = r_d$.
The representations arising via Yu's construction are now
conventionally called \term{tame supercuspidals}.
By \cite{jkim:exhaustion}*{Theorem 19.1}, all
supercuspidal representations of $G(\K)$ are tame, as long
as \K has sufficiently large residual characteristic (see
\S3.4 \loccit, especially Remark 3.5 there).
We will consider only those tame supercuspidals that are
not twists of depth-zero representations (\ie, for which
$d > 0$).
In this case, the number $r_d$ is less interesting than the
number $r_{d - 1}$, which is the smallest depth of a
twist of $\pi_\Psi$ by a character of $G(\K)$.
We call $r_{d - 1}$ the \term{essential depth} of
$\Psi$ (or $\pi_\Psi)$, and
say that $\Psi$ (or $\pi_\Psi$) has \term{minimal depth $r$} if
$r_{d - 1} = r_d = r$.

\begin{remark}
The depth-zero analogue of our Theorem
\ref{thm:distribution characters}(3) was proven in
\cite{gordon:depth0} by a different method.
\end{remark}

\begin{remark}
We will usually identify $\fg^i$ and $\mf z(\fg^i)$ with
subalgebras of \fg, as follows.
If $S^i$ is a
torus such that $G^i = C_G(S^i)$, then we regard $\fg^i$ as
the algebra of fixed points for the adjoint action of $S^i$,
and $\mf z(\fg^i)$ as the algebra of fixed points for the
adjoint action of $G^i$.
We regard $\fg^{i\,*}$ and $\mf z(\fg^i)^*$ as subgroups of
$\fg^*$ in a similar fashion.
\end{remark}

\begin{remark}
\label{rem:short-cuspidal-datum}
If $d = 1$ and $G^0$ is an (elliptic) torus, then
$\pi_{-1}$ is a (linear) character.  We may, and do, assume,
upon replacing $\phi_0$ by $\pi_{-1} \otimes \phi_0$,
that it is the trivial character.
We will then call the pair $(G^0, \phi_0)$ a
\term{toral, very supercuspidal datum}, and write just
$\pi = \pi_{(G^0, \phi_0)}$ for the associated
representation.
\end{remark}

For $0 \le i < d$, we can associate to the character $\phi_i$
of $G^i(\K)$ an element
$X^*_i = X^*_{i, \Psi} \in \fg^{i\,*}(\K)_{x, -r_i}$
such that
$$
\phi_i(\mexp_{x, {r_i}:{r_i^+}}(Y))
= \Lambda(\langle X^*_i, Y\rangle)
$$
for all $Y \in \fg^i(\K)_{x, r_i}$.
The condition of genericity for $\phi_i$ implies that
we may choose $X^*$ in the Lie algebra of a $\K\tame$-split,
maximal \K-torus $T \subseteq G^i$ in such a way that the
roots of $T$ in $G^{i + 1}$ that vanish on $X^*_i$
are precisely those that appear in $G^i$ (see
\cite{yu:supercuspidal}*{\S8--9}---in particular, Lemma
8.1 \loccit).
In particular, $X^*_i \in \mf z(\fg^i)^*(\K)$.
Notice that $X^*_i$ is well determined only up to
translation by $\mf z(\fg^i)^*(\K)_{x, -r_i^+}$.

\begin{definition}
\label{defn:X*_Psi}
If $\Psi$ is a cuspidal datum such that
$d = d_\Psi > 0$, then we will write $X^*_\Psi$ for
$\sum_{i = 0}^{d - 1} X^*_{i, \Psi}$, with the
understanding that it is well defined only up to suitable
translation.
We assume that the residual characteristic of \K is sufficiently large,
so that there exists a ``nice'' identification of $\fg(\K)$ and
$\fg^*(\K)$.
(See
\cite{adler-roche:intertwining}*{Proposition 4.1} for
details and a precise statement.)
We fix any such identification,
and write $\Gamma_\Psi$ for the element
of $\fg(\K)$ that corresponds to $X^*_\Psi$.
\end{definition}

The element $\Gamma_\Psi$ in Definition \ref{defn:X*_Psi} is
the same one that occurs in
\cite{jkim-murnaghan:gamma-asymptotic}*{Definition 4.1.3(2)}.
For convenience, we will sometimes write
$\hat\mu_{\Gamma_\Psi}$ instead of $\hat\mu_{X^*_\Psi}$.

\begin{definition}
\label{definition:res_datum}
If $\Psi$ is a cuspidal datum of essential depth $r$,
then we say that $\Psi$ is \term{restricted} if and only if
the coset
$\Gamma_\Psi
	+ \mf z(\fg^{0, \Psi})(\K)_{x_\Psi, (-r)^+}$
(see Definition \ref{defn:X*_Psi})
contains a restricted element.
When this is the case, we will always assume that
$\Gamma_\Psi$ itself is restricted.
\end{definition}

\begin{remark}
\label{rem:res-sc-1-step}
If $\Psi$ is a restricted, cuspidal datum for $G(\K)$,
then $d_\Psi = 1$ and $G^{0, \Psi}$ is an
elliptic torus in $G$.
Thus we may, and do, replace any such cuspidal datum by its
associated toral, very supercuspidal datum (see Remark
\ref{rem:short-cuspidal-datum}), which we will also call a
restricted datum.
\end{remark}

Suppose that
$\Psi$ and $\dot\Psi$
are cuspidal data such that
$\pi_\Psi \cong \pi_{\dot\Psi}$.
By \cite{hakim-murnaghan:distinguished-tame-sc}*{%
	Definitions 4.19(\textbf{F1}) and 5.2,
	and
	Theorem 6.7%
}, we have,
after replacing $\dot\Psi$ by a $G(\K)$-conjugate
(which does not affect whether or not it is restricted)
and
$x_{\dot\Psi}$ by a translate by an element of
the (tensored-up) rational character lattice
$X_*^\K(Z(G^{0, \dot\Psi})) \otimes_\Z \R
= X_*^\K(Z(G)) \otimes_\Z \R$
(which does not affect the Moy--Prasad subgroups appearing below),
that
\begin{gather*}
d \ldef d_\Psi = d_{\dot\Psi}, \\
G^i \ldef G^{i, \Psi} = G^{i, \dot\Psi}\quad\forall 0 \le i < d, \\
\vec r \ldef \vec r_\Psi = \vec r_{\dot\Psi}, \\
x \ldef x_\Psi = x_{\dot\Psi}, \\
\intertext{and}
\prod_{i = 0}^{d - 1} \phi_{i, \Psi}
= \prod_{i = 0}^{d - 1} \phi_{i, \dot\Psi}
\quad\text{on $G^0(\K)_{x, r_{d - 1}}$.}
\end{gather*}
This implies that
$\Gamma_\Psi$ and $\Gamma_{\dot\Psi}$
are congruent modulo $\fg^0(\K)_{x, -r_{d - 1}^+}$,
hence (since both are central) modulo
$\mf z(\fg^0)(\K)_{x, -r_{d - 1}^+}$.
In particular, $\Psi$ is restricted if and only if
$\dot\Psi$ is.
Thus, the following definition makes sense.

\begin{definition}
\label{defn:res-sc}
We say that the tame, supercuspidal representation
$\pi_\Psi$ is \term{restricted} if and only if $\Psi$ is
restricted.
\end{definition}

The characters of restricted, tame, supercuspidal
representations%
---indeed, of all those tame,
supercuspidal representations $\pi_\Psi$ for which
$d = d_\Psi > 0$ and
$G^{d - 1, \Psi}/Z(G)$ is \K-anisotropic---%
were computed in
\cite{adler-spice:explicit-chars}.  If, as here, one is only
interested in character values near the identity,
and if the residual
characteristic of \K is sufficiently large
(see \cite{jkim-murnaghan:gamma-asymptotic}*{\S3.2}---in
particular, Remark 3.2.1 \loccit), then we can also
use the results of \cite{jkim-murnaghan:gamma-asymptotic},
which applies to a more general class of representations
than the ones that we consider.

\begin{theorem}[%
\cite{jkim-murnaghan:gamma-asymptotic}*{Theorem 4.4.1}
and
\cite{adler-spice:explicit-chars}*{Corollary 6.7}%
]
\label{thm:main-ext-ingredient}
Let $\Psi$ be a restricted, depth-$r$, cuspidal datum for
$G(\K)$, and put $\pi = \pi_\Psi$ and
${\Gamma} = {\Gamma_\Psi}$
(in the notation of Definition \ref{defn:X*_Psi}).
Then
$$
\theta_\pi(\mexp(Y))
= \deg(\pi)\hat\mu_\Gamma(Y)
$$
for all regular, semisimple elements $Y$ of $\fg(\K)_r$,
where $\deg(\pi)$ is the formal degree of $\pi$.
\end{theorem}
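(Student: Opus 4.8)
The plan is to obtain the identity by specializing, and then reconciling the normalizations of, two results already in the literature: the $\Gamma$-asymptotic expansion of Kim--Murnaghan and the explicit character formula of Adler--Spice. Since $\Psi$ is restricted, Remark~\ref{rem:res-sc-1-step} gives $d_\Psi = 1$ with $G^{0,\Psi}$ an elliptic (tame) maximal torus, so $\pi = \pi_\Psi$ is toral and very supercuspidal; for sufficiently large residual characteristic it satisfies the hypotheses of \cite{jkim-murnaghan:gamma-asymptotic}*{Definition 4.1.3(2)}, and, as recorded just after Definition~\ref{defn:X*_Psi}, the semisimple element attached to $\pi$ by \cite{jkim-murnaghan:gamma-asymptotic}*{Theorem 4.4.1} is exactly $\Gamma = \Gamma_\Psi$. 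The first step is to write down the $\Gamma$-asymptotic expansion: on $\fg(\K)_r \cap \fg(\K)\rss$ the function $\theta_\pi\circ\mexp$ equals a finite sum $\sum_{\mathcal O} c_{\mathcal O}\,\hat\mu_{\Gamma + \mathcal O}$, the sum running over nilpotent $C_G(\Gamma)(\K)$-orbits $\mathcal O$ in $\Lie C_G(\Gamma)(\K)$. Since $\Gamma$ is restricted, hence regular semisimple, $C_G(\Gamma)$ is a torus, so $\Lie C_G(\Gamma)$ contains no nonzero nilpotent element and the sum collapses to the single term $c_0\,\hat\mu_\Gamma$, where $c_0$ is the coefficient of the zero orbit.

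The second step is to identify $c_0$ with the formal degree $\deg(\pi)$. The cleanest route is \cite{adler-spice:explicit-chars}*{Corollary 6.7}: the datum $\Psi$ has $d_\Psi > 0$ and $G^{d-1,\Psi}/Z(G)$ is $\K$-anisotropic (it is a torus modulo the center), so $\pi_\Psi$ is among the representations treated there, and that corollary expresses $\theta_\pi$ near a regular semisimple element as a normalization constant times a product of Gauss-sum and sign factors times a value of $\hat\mu_\Gamma$; specializing to $\mexp(Y)$ with $Y \in \fg(\K)_r \cap \fg(\K)\rss$ trivializes all the Gauss-sum and sign factors and leaves the constant, which is $\deg(\pi)$. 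Alternatively, one can avoid the explicit formula and determine $c_0$ by pairing the collapsed expansion with a test function in $\Hecke_r(\fg(\K))$ that detects the depth-$r$ layer of $\theta_\pi$ (for instance $\mathbf 1_{\Gamma + \fg(\K)_{x, r^+}}$, as in the proof of Proposition~\ref{prop: separates}), and comparing with the normalization of the leading term of the local character expansion of a depth-$r$ supercuspidal; either way $c_0 = \deg(\pi)$.

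The last step is bookkeeping. The Kim--Murnaghan expansion holds for $Y$ of depth at least the essential depth $r_{d-1}$ of $\Psi$, and restrictedness forces $r_{d-1} = r_d = r$, which is exactly the hypothesis ``$Y \in \fg(\K)_r$''; the map $\mexp$ carries this set onto regular topologically unipotent elements, as in Section~\ref{sec:hyps}. I expect the main obstacle to be not any single argument but the comparison of conventions: one must check that the measure normalizations implicit in $\hat\mu_\Gamma$, the additive character $\Lambda$, the Cayley transform $\mexp$, and the ``nice'' isomorphism $\fg(\K)\iso\fg^*(\K)$ relating $X^*_\Psi$ and $\Gamma_\Psi$ (Definition~\ref{defn:X*_Psi}) either coincide with, or have been explicitly reconciled with, those of \cite{jkim-murnaghan:gamma-asymptotic} and \cite{adler-spice:explicit-chars}. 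Granting this comparison, the two cited results fit together to give $\theta_\pi(\mexp(Y)) = \deg(\pi)\,\hat\mu_\Gamma(Y)$ for all regular semisimple $Y \in \fg(\K)_r$.
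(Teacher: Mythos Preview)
The paper does not prove this theorem at all: it is quoted directly from the literature, with the citation to \cite{jkim-murnaghan:gamma-asymptotic}*{Theorem 4.4.1} and \cite{adler-spice:explicit-chars}*{Corollary 6.7} given in the theorem header and no proof environment following. Your proposal is therefore not competing with any argument in the paper; rather, it is a sketch of why the cited external results yield the stated formula in the restricted case.

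As such a sketch, your outline is sound. The key reductions---that restrictedness forces $d_\Psi=1$ with $G^{0,\Psi}$ an elliptic torus (Remark~\ref{rem:res-sc-1-step}), that $\Gamma$ being regular semisimple collapses the $\Gamma$-asymptotic expansion to the single term indexed by the zero orbit, and that the surviving coefficient is the formal degree---are exactly what one needs to extract the statement from the cited sources. Your caveat about reconciling measure, additive-character, and mock-exponential conventions across \cite{jkim-murnaghan:gamma-asymptotic}, \cite{adler-spice:explicit-chars}, and the present paper is the right place to be cautious, and the paper itself flags this issue (see Remark~\ref{remark:dep-on-haar}).
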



\subsection{Constructible motivic, and constructible exponential, 
functions}
\label{sec:our-funs}
Here we will use the theory of motivic integration and Fourier transform 
as developed in \cites{CL,CLF} (see
\cites{CL.expo,GY} for exposition). 
In particular, we use the following notation and terminology introduced in 
\cites{CL, CLF}.
\begin{itemize}
\item  The notation $h[m,n,r]$ stands for 
the functor from the category of fields 
containing $\Q$ to the category of sets that assigns to each field $L$ the set
$L((t))^m\times L^n\times \Z^r$.

\item All the logical formulas we consider are formulas in the 
language ${\mathcal L}_{\mathcal \Z}$ 
(see \cite{CL.expo}*{\S6.7} or \cite{GY}*{\S5}). 
This is the Denef--Pas language with coefficients in 
$\Z\llbracket t\rrbracket$ in the valued field
sort, combined with Presburger language for $\Z$.
Any non-Archimedean local field $\K$ with a choice of a uniformizer is a 
structure for the language ${\mathcal L}_{\Z}$.

\item The category $\de_{\Q}$ is the category of definable subassignments 
of $h[m,n,r]$ for some nonnegative integers $m,n,r$.

\item For a definable subassignment $S\in \de_{\Q}$, we sometimes consider 
the category $\de_S$ of {\emph {definable subassignments over $S$}}.

\item For a positive integer $M$, let ${\mathcal A}_{\Z, M}$ stand for the collection 
of all finite extensions of $\Q_p$ with $p>M$, and let ${\mathcal B}_{\Z,M}$ stand for the 
collection of all fields $\F_q((t))$ with $q=p^r$ for some positive integer $r$  and $p>M$.
We let  ${\mathcal F}_{\Z, M} ={\mathcal A}_{\Z,M}\cup {\mathcal B}_{\Z,M}$. Define
 ${\mathcal F}_{\Z} =\cup_{M>0} {\mathcal F}_{\Z,M}$, and define 
${\mathcal A}_{\Z}$ and ${\mathcal B}_{\Z}$ similarly.

\item We will use the category $\rde_{\Q}$---this is the category of definable subassignments of $h[0,n,0]$ (that is, only residue-field variables are allowed in the formulas, \ie, the elements of this category are definable in the language of rings). For a subassignment $U\in \rde_{\Q}$, 
given a non-Archimedean local field $\K$ with residue field $\rf_{\K}$, we 
will denote the specialization of $U$ to $\K$ by $U(\rf_{\K})$, to emphasize that
it is the set of residue-field points.

\end{itemize} 

We remind the reader that a formula $\varphi$ 
in ${\mathcal L}_{\Z}$ 
with $m$ free variables of the valued-field sort, $n$ free variables of the 
residue-field sort, and $r$ free variables of $\Z$-sort, and no other free 
variables, defines a 
subassignment of the functor $h[m,n,r]$; on the other hand, there exists
$M>0$ (that depends on $\varphi$) such that given a field 
$\K\in {\mathcal F}_{\Z, M}$  with a choice of a uniformizer, 
$\varphi$ 
can be interpreted to give a subset
of $\K^m\times \rf_\K^n\times \Z^r$,
where $\rf_\K$ is the residue field of $\K$.
For details on the specialization of formulas and subassignments, 
see \cite{CL.expo}*{\S6.7}.
For a definable subassignment $S$ of $h[m,n,r]$, 
we denote by $S_{\K}$ its specialization
to $\K$ (so that $S_{\K}$ is a subset of $\K^m\times \rf_\K^n\times \Z^r$).
 
We will often call a set {\emph{definable}} if it is obtained from a 
definable subassignment by specialization. We call a function definable 
if its graph is a definable set.  

Following \cite{CL}*{Section 2.6}, we define a point of a definable 
subassignment 
$S$ to be a pair $y=(y_0, k(y_0))$, where $k(y_0)$ is a field 
containing $\Q$, and 
$y_0\in S(k(y_0))$. If $f:Z\to S$ is a morphism (\ie, $Z$ is an object of 
$\de_S$), one can 
talk about 
the fibre $Z_y$ of $Z$ at $y$, which is a subassignment of $Z$ defined 
using the graph of $f$ (see \cite{CL}*{\S2.6}). Further, by considering 
the graph again, one can show that there exists $M_f>0$ such that 
the specialization of fibres is well defined for $\K\in{\mathcal F}_{\Z, M_f}$.
A morphism of definable subassignments $f:Z \to S$ as above
induces the map of specializations $f_{\K}:Z_{\K}\to S_{\K}$, and the 
fibres of this map are the specializations of the subassignments $Z_y$.
We will abbreviate the notation, and denote by $Z_{y,\K}$ the fibre of 
$f_\K$ at $y\in S_{\K}$. 

Let $S\in \de_{\Q}$ be a definable subassignment. 
In \cite{CL}, R.~Cluckers and F.~Loeser defined the ring 
${\mathcal C}(S)$ of
constructible motivic 
functions on $S$,
which can be made into a $\Q$-algebra by tensoring with 
$\Q$ over $\Z$. Given a non-Archimedean local field \K with a choice 
of uniformizer, we obtain, after applying specialization, the $\Q$-algebra of 
constructible $p$-adic functions ${\mathcal C}_\K$ on $S_{\K}$ (see 
\cite{CL.expo}*{\S6.7}). By construction, these functions are 
$\Q$-valued.
For a constructible motivic function $F\in {\mathcal C}(S)$, we denote 
by $F_{\K}$ its specialization to $S_{\K}$. This function is well 
defined for fields $\K$ of sufficiently large residue characteristic.

Even though we cannot
discuss the ring of  
constructible motivic functions on a subassignment $S$ in detail here, 
we point out that 
it is made up of two parts. One part is the ring of so called Presburger 
functions with values in the ring
\[
A=\Z\bigl[\lef, \lef^{-1}, \bigl((1-\lef^{-i})^{-1}\bigr)_{i\in \N}\bigr].
\]
Here $\lef$ is a formal symbol. When we pass to the specialization of such a function $f$ to a field $\K\in {\mathcal F}_{\Z, M}$, the symbol $\lef$ specializes to the cardinality $q$ of the residue field $\rf_\K$. Thus, 
$f_\K$ is a function on $S_\K$ with values in the ring
$\Z\bigl[1/q, \bigl((1-q^{-i})^{-1}\bigr)_{i\in \N}\bigr]$.
The other part is the Grothendieck ring of the category $\rde_S$. 
When we specialize to a local field $\K$, an element $Z$ of 
$\rde_S$ specializes to 
an integer-valued function that counts the numbers of $\rf_\K$-points on 
fibres $Z_{s, \K}$. 
We refer to \cite{CL.expo}*{Section 3.2} for details.
There is also a notion of positivity for constrcutible motivic functions.
The positive constructible motivic functions always specialize to 
positive-valued functions.

\begin{remark}
Note that to apply motivic integration, one has to pass to 
Functions (introduced in \cite{CL}), which 
are equivalence classes of functions ``modulo support of smaller dimension''. 
We will, however, deal with individual functions, thinking of them as 
representatives of the corresponding Functions. In our case this should 
not cause any 
confusion, since all the test functions we deal with are 
constant on $p$-adic balls of the same dimension as the ambient space.
\end{remark} 

Further, for a subassignment $S$ as above, the ring of motivic 
constructible exponential functions 
${\mathcal C}^{\exp}(S)$ is defined 
in \cite{CLF}. Given a local field \K of 
sufficiently large
residue characteristic, with uniformizer $\varpi$, and an additive 
character $\Lambda$ satisfying the condition
\begin{equation}\label{eq:D_K}
\Lambda(x)=\exp\left(\frac{2\pi i}{p}{\text{Tr}}_{\rf_\K}(\bar x)\right)
\end{equation}
for $x\in {\mathcal O}_\K$, the elements of this ring specialize to what we will call 
($p$-adic) constructible exponential functions.
Here, $p$ is the characteristic of $\rf_\K$, 
$\bar x\in \rf_\K$ is the reduction of $x$ modulo $\varpi$, and 
$\text{Tr}_{\rf_\K}$ is the trace of $\rf_\K$ over its prime subfield
(see \cite{CHL}*{\S10.2} or \cite{GY}*{\S6.3} for 
an exposition). Note that 
here we make a choice of a square root $i$  of $-1$ in $\C$. 
We observe that this assumption  on the character $\Lambda$  is 
a special case of the assumption we made in Section \ref{sub: test functions}. 

Given
a field \K as above, with uniformizer $\varpi$ and
an additive character $\Lambda$ as in (\ref{eq:D_K}), we will consider the $\Q$-algebra 
of functions on 
$S_\K$ generated by the specializations
of motivic constructible exponential functions. Let us denote this algebra by 
${\mathcal C}^{\exp}_{\K, \Lambda}(S_\K)$.
By definition, it contains ${\mathcal C}_\K(S_\K)$.
Note that the elements of this algebra no longer have to be $\Q$-valued.

We will often need to talk about constructible (respectively, 
constructible exponential) functions on the set of $\K$-points of an
algebraic group $H$ or its Lie algebra \mf h. We observe  
that any affine algebraic variety $X$
(for example, $X = H$ or $X = \mf h$)
naturally gives a definable 
subassignment of $h[m,0,0]$ for some $m$, and that $X_\K=X(\K)$ 
(where the symbol $X$ on the left stands for the corresponding 
subassignment, and the one on the right 
for the variety), for all non-Archimedean local 
fields $\K$ of sufficiently large residue characteristic.
It is in this sense that we speak below of constructible functions on $H(\K)$ or
$\mf h(\K)$.

Given a field $\K$ (with a uniformizer $\varpi$, and an 
additive character $\Lambda$), we will use the term  
``constructible function'' 
(respectively, 
``constructible exponential function'') on $S_{\K}$ 
for the elements of  
${\mathcal C}_{\K}(S_\K)$ (respectively, of ${\mathcal C}_{\K,\Lambda}^{\exp}(S_\K)$).

\subsubsection{Some definable sets}
\label{subsub:definable}
	
%

To prepare the ground for our main results, we observe that
some of the maps and sets we are using are definable.

First, both our substitute exponential map $\mexp$ and its inverse are
rational functions in the matrix entries of their arguments.
Therefore,
they both take definable sets to definable sets,
and
composition with either takes constructible functions to constructible functions. 
We will rely on this fact everywhere in this paper, without further mention.

Next, we would like to show that 
the sets $G(\K)_{0^+}\reg$ and $\fg_{0^+}\reg$ are definable.
Since $\mexp(\fg(\K)_{0^+}\reg)=G(\K)_{0^+}\reg$, it suffices to show that
$\fg(\K)_{0^+}\reg$ is definable.
First, we observe that
there is a (field-independent) finite set
$\set{(%
\fg_z,
\overline\fg_z,
\rho_z%
)}{z \in I}$
consisting of triples of:
\begin{itemize}
\item a definable subassignment $\fg_z$ of \fg,
\item a definable subassignment $\overline\fg_z$ of some $h[0,n,0]$, and 
\item a morphism $\rho_z : \fg_z \to \overline\fg_z$
\end{itemize}
such that, for each local, non-Archimedean field \K
and maximal parahoric subgroup $K$ of $G(\K)$,
there exist $z \in I$ and a $G(\K)$-conjugate $K'$ of $K$ such that
the Lie algebra of $K'$ is the specialization of $\fg_z$,
the Lie algebra of the corresponding reductive quotient
is the specialization of $\overline\fg_z$,
and the specialization of $\rho_z$ is the reduction map.
(Here, the indexing set should be thought of essentially as the
set of vertices of some fixed chamber in $\BB(G, \K)$.)
For each $z \in I$,
an element $X$ of the specialization $\fg_{z,\K}$ lies in
$\fg(\K)_{0^+}$ if and only if
$\rho_z(X)^n = 0$ for some integer $n$ that is independent
of \K.
Finally, since the $G(\K)$-orbit of any definable set is definable, we conclude that
$\fg(\K)_{0^+}$, and therefore also $G(\K)_{0^+}$, is definable.  
Since regularity is also definable
(see \cite{gordon-hales:transfer}*{Definition 14}),
we have the desired result.

From now on, we will denote by $G_{0^+}\reg$ (respectively, by $\fg_{0^+}\reg$) 
the definable 
subassignment of $G$ (respectively, of $\fg$) 
that specializes to $G(\K)_{0^+}\reg$ (respectively, $\fg(\K)_{0^+}\reg$).
Note that now we can talk about constructible motivic,
or constructible motivic exponential, functions 
on $G_{0^+}\reg$ and $\fg_{0^+}\reg$.

\subsection{Characters near the identity: the statements}\label{section:S}

Recall that $G$ is a symplectic or split special orthogonal
group.

We need a way to handle distributions (in particular, the distribution characters) in the motivic context. The idea is to define a constructible  
family of constructible motivic test functions, such that 
their specializations form a dense subset of the space of all test functions. 
This  is done in Section \ref{sec:badprimes}. Knowing the values of the 
distribution evaluated at the test functions from this family is 
equivalent to knowing the distribution itself.




We temporarily have the need to define a notion of 
``equivalence of representations'', that, \textit{a priori}, is even less refined than 
$r$-equivalence.  We will prove in Lemma \ref{lem:equivalence}, 
that this is in fact the same notion as our notion of $r$-equivalence.




\begin{definition}\label{def:definable equivalence}
Let $\K\in {\mathcal F}_{\Z}$. 
We call two representations $\pi$ and $\pi'$  of $G(\K)$ 
{\bf definably $r$-equivalent} 
(or say that they are in the same definable $r$-equivalence class) if 
there exists a constructible 
family of constructible motivic functions $f_a$, indexed by a 
parameter $a$ from some definable subassignment $S$, such that  
the following conditions are satisfied:
\begin{enumerate}
\item\label{def.eq.1} The support of the specialization $f_{a, \K}$ of $f_a$ to $\K$ is 
contained in $G(\K)_r\reg$, for every $a\in S_\K$.
\item\label{def.eq.2} There exists a family of definable subassignments 
${\mathcal K}_n$, such that:
\begin{itemize}
\item $\cup_{n>0}{\mathcal K}_n=G_{0^+}\reg$, and 
\item For every $n>0$, 
 the $\C$-span of the functions from the family $f_{a, \K}$ with 
supports contained in ${\mathcal K}_{n, \K}$  is  
dense in $C_c^{\infty}(G(\K)_r\reg\cap {\mathcal K}_{n, \K})$ 
(with respect to the $\sup$-norm).
\end{itemize}
\item\label{def.eq.3} The distribution characters of $\pi$ and $\pi'$ coincide on the 
functions 
$f_{a, \K}$, \ie, we have
$$\Theta_{\pi}(f_{a, \K})=\Theta_{\pi'}(f_{a, \K})$$
for all $a\in S_\K$.
\end{enumerate} 
\end{definition}

\begin{theorem}\label{thm:distribution characters}
There exists 
an integer $M>0$ such that the following three statements hold for every 
non-Archimedean local field $\K\in {\mathcal F}_{\Z,M}$
and $r > 0$.
\begin{enumerate}
\item\label{thm:distribution characters:B_g,-r}
There exists a definable, in the language of rings, subassignment 
$B_{\fg, -r}\in \rde_{\Q}$ that parameterizes the definable 
$r$-equivalence classes of 
restricted representations of $G(\K)$ of minimal depth $r$, in the sense
that 
there is a one-to-one (well defined) map from the set of definable 
$r$-equivalence 
classes of such
representations onto
$B_{\fg,-r}(\rf_\K)$.


\item\label{thm:distribution characters:motivic}
There exists a positive 
constructible motivic function $Q_{\fg, -r}$ on $B_{\fg, -r}$, 
and
for every constructible motivic test function $f$  such that 
the support of its specialization $f_\K$ is contained 
in $G(\K)_r\reg$, there exists a constructible motivic exponential 
function $H^f$
on the definable subassignment $B_{\fg, -r}$,  and a positive integer $M_f$
(which might depend on $f$), 
such that for every field 
$\K$ with residue characteristic bigger than
$M+M_f$ we have 
\[
\frac{1}{\deg(\pi)}\Theta_{\pi}(f_\K)
= \frac{H^f_\K(x)}{Q_{\fg, -r, \K}(x)},
\]
for every
restricted, supercuspidal, minimal-depth-$r$ representation $\pi$
with corresponding point
$x\in B_{\fg, -r}(\rf_\K)$.

\item Let $\{f_a\}_{a\in S}$ be a family of constructible motivic functions on $G(\K)$ satisfying Definition \ref{def:definable equivalence} (\ref{def.eq.1}). 
Then $\Theta_{\pi}(f_{a, \K})$ is a specialization of a 
constructible motivic exponential function of $a$.  
\end{enumerate}
\end{theorem}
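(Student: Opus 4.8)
The plan is to express $\Theta_\pi(f_{a,\K})$ as a motivic integral over a definable family and then invoke the general machinery of \cites{CL,CLF}. First I would use Theorem \ref{thm:main-ext-ingredient}: since $\pi$ is a restricted, depth-$r$ representation with associated element $\Gamma = \Gamma_\Psi$, the character density satisfies $\theta_\pi(\mexp(Y)) = \deg(\pi)\,\hat\mu_\Gamma(Y)$ for all regular semisimple $Y \in \fg(\K)_r$. Because the support of each $f_{a,\K}$ is contained in $G(\K)_r\reg$ (Definition \ref{def:definable equivalence}(\ref{def.eq.1})), and $\mexp$ carries $\fg(\K)_{0^+}\reg$ onto $G(\K)_{0^+}\reg$ and is a definable bijection (see \S\ref{subsub:definable}), I can pull back $f_{a,\K}$ along $\mexp$ to get a constructible motivic test function $\tilde f_{a,\K}$ on $\fg(\K)$ supported in $\fg(\K)_r\reg$, with an explicit definable Jacobian factor. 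Then
\[
\Theta_\pi(f_{a,\K})
= \int_{G(\K)} \theta_\pi(g) f_{a,\K}(g)\,dg
= \deg(\pi) \int_{\fg(\K)} \hat\mu_\Gamma(Y)\,\tilde f_{a,\K}(Y)\,dY
= \deg(\pi)\,\hat\mu_\Gamma(\hat{\tilde f}_{a,\K}),
\]
using the definition of $\hat\mu_\Gamma$ as (the function representing) the Fourier transform of the orbital integral at $X^*_\Psi$. So up to the factor $\deg(\pi)$ — which the statement allows, since it only asks for $\Theta_\pi(f_{a,\K})$ to be a specialization of a constructible motivic exponential function of $a$, and $\deg(\pi)$ itself is such a function by part (2) — it suffices to show that the orbital integral $\mu_{X^*_\Psi}(\hat{\tilde f}_{a,\K})$, i.e. the integral of $\hat{\tilde f}_{a,\K}$ over the $G(\K)$-orbit of $X^*_\Psi$, is a constructible motivic exponential function of $a$ (and of the parameters of $\Gamma$).

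The key steps, in order, are: (i) realize the parameter $\Gamma$, equivalently $X^*_\Psi$, as ranging over a definable family — here I would use the parameterization of (stable) thickened orbits on the scheme $S_{\fg,r}$ from \cite{cunningham-hales:good} together with the discussion around Proposition \ref{prop: separates} and Lemma \ref{lemma: thick}, so that the relevant data live over a definable base; (ii) check that the Fourier transform $f \mapsto \hat f$ preserves the class of constructible motivic exponential functions and acts definably on parameters — this is exactly the content of the motivic Fourier transform theory of \cite{CLF}, so the additive character $\Lambda$ is handled by the exponential part of the formalism; (iii) express the orbital integral $\int_{\mathcal{O}(X^*_\Psi)} \hat{\tilde f}_{a,\K}$ as a motivic integral over a definable subassignment, using a definable set of representatives for $G(\K)/T(\K)$ (where $T = C_G(\Gamma)$, which is tamely ramified since $\Gamma \in \fg(r,\K)$) and the definable Weyl-discriminant normalization of the invariant measure, as in \cite{cunningham-hales:good}; and (iv) apply the main theorem of \cite{CL} (stability of constructible functions under integration along definable morphisms) to conclude that the result is a constructible motivic exponential function in $a$ and in the orbit parameters. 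Reducing modulo the residue-field-point parameterization of \S\ref{thm:distribution characters:B_g,-r} then gives the statement as phrased.

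The main obstacle, I expect, is step (iii): making the orbital integral literally a motivic integral requires a \emph{uniform, field-independent, definable} description of the $G(\K)$-orbit (or a fundamental domain for $G(\K)/T(\K)$ acting on it) together with the correct invariant measure, and one must ensure the resulting integrand is genuinely in $\mathcal{C}^{\exp}$ and that the integral converges (the orbital integrals here are those appearing in \cite{cunningham-hales:good}, where convergence and the relevant comparison of discriminant factors $|D^{\fg,\mf l}(X)|$ is already established). Once the orbit and measure are packaged definably — which is essentially done in \cite{cunningham-hales:good} via the scheme $S_{\fg,r}$ and the analysis underlying Proposition \ref{prop: separates} — the rest is a formal appeal to the transfer-and-integration theorems of \cites{CL,CLF}. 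A secondary technical point is uniformity in the field: one must extract a single integer $M$ (from the finitely many formulas involved) beyond which all specializations behave as expected, but this is exactly the kind of bookkeeping those papers are designed to accommodate, so I would simply absorb it into the constant $M$ already present in the statement of Theorem \ref{thm:distribution characters}.
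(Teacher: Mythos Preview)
Your overall architecture is correct and matches the paper: pull back to the Lie algebra via $\mexp$, use Theorem~\ref{thm:main-ext-ingredient} to rewrite $\tfrac{1}{\deg(\pi)}\Theta_\pi(f_{a,\K})$ as $\mu_{\Gamma_\Psi}(\widehat{f_a\circ\mexp^{-1}})$, observe that the Fourier transform stays in $\mathcal C^{\exp}$ by \cite{CLF}, and then argue that the orbital integral is motivic in the parameters. The gap is precisely where you flag it, step~(iii), and your proposed fix is not the one that works.

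You suggest producing ``a definable set of representatives for $G(\K)/T(\K)$'' and integrating over it. Neither \cite{cunningham-hales:good} nor this paper supplies such a thing, and the scheme $S_{\fg,r}$ parameterizes $r$-equivalence classes (stable thickened orbits), not orbits or fundamental domains. What the paper actually does (Proposition~\ref{prop: B_g,r}(2),(4)) is replace the orbital integral over $G(\K)/T(\K)$ by an honest integral over the \emph{thickened orbit} $\mathcal O_{-r}(y)=\nu_{-r,\K}^{-1}(y)$, which is an open definable subset of $\fg(\K)$ depending definably on $y\in B_{\fg,-r}$. One then applies the Weyl integration formula to the product of the test function with the characteristic function of $\mathcal O_{-r}(y)$; only the term for $T=C_G(X_y)$ survives, and the inner orbital integral is constant on $X_y+\ft(\K)_{(-r)^+}$ by Proposition~\ref{prop: separates}. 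This yields
\[
\int_{\mathcal O_{-r}(y)}\widehat{f_a\circ\mexp^{-1}} \;=\; Q_{\fg,-r,\K}(y)\,\mu_{X_y}\bigl(\widehat{f_a\circ\mexp^{-1}}\bigr),
\]
with $Q_{\fg,-r}$ a positive constructible motivic function on $B_{\fg,-r}$. The left side is now a motivic integral of a $\mathcal C^{\exp}$ function over a definable family of open sets in $\fg$, hence constructible exponential in $(y,a)$ by \cite{CLF}*{Theorem~4.1.1} and specialization. This is the missing idea: trade the quotient $G/T$ for an open set in $\fg$ at the cost of the harmless factor $Q_{\fg,-r}(y)$.

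One further correction: you cannot conclude from part~(2) that $\deg(\pi)$ is itself constructible; the paper explicitly does not know this (see Remark~\ref{remark:dep-on-haar}). The argument instead shows that $\tfrac{Q_{\fg,-r,\K}(y)}{\deg(\pi)}\Theta_\pi(f_{a,\K})$ is constructible exponential in $a$, and then observes that for fixed $\K$ and $\pi$ the prefactor $\tfrac{\deg(\pi)}{Q_{\fg,-r,\K}(y)}$ is a positive rational constant independent of $a$, so multiplying by it stays in the $\Q$-algebra $\mathcal C^{\exp}_{\K,\Lambda}(S_\K)$.
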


\begin{remark}
Note that we consider only representations of minimal depth
above.  Since every representation has a twist by a linear
character of $G(\K)$ that
is of minimal depth, this is not a serious restriction.
Since the necessary twist is trivial on a sufficiently small
neighbourhood of the identity, we could remove the
restriction by considering germs of characters (rather than
their restrictions to a specified neighbourhood of the
identity).
Another approach would be to incorporate the character of
$G$ into the language, so that we could use it directly in
our description of character values.
We believe that our present approach is the simplest.
\end{remark}

Let $G$ be a symplectic or special orthogonal group, as above. 
According to the above theorem, there exists $M>0$ such that for 
every
$\K\in {\mathcal F}_{\Z,M}$, the definable $r$-equivalence classes 
(in the sense of Definition~\ref{def:definable equivalence}) of 
restricted, depth-$r$, supercuspidal 
representations of $G(\K)$ may be identified with points
$x\in B_{\fg, -r}(\rf_\K)$.
Hence, if $\pi$ is a restricted,  supercuspidal
representation of $G(\K)$ of minimal depth $r$ whose image in
$B_{\fg,-r}(\rf_\K)$ is $x$, then, 
as we will prove later in Lemma \ref{lem:equivalence}, we may write unambiguously
$\Theta_x$ for the restriction to $G(\K)_r\reg$ of the
distribution character of $\pi$.
We also denote the restriction 
of $\theta_{\pi}$ to $G(\K)_r\reg$ by $\theta_x$.
Finally, we observe that the germ of the character determines the formal
degree of the representation; hence, all representations of $G(\K)$
that lie in the same definable equivalence class corresponding to a point
$x\in B_{\fg, -r}(\rf_\K)$ have the same formal degree, which we will denote 
by $\deg(x)$.

\begin{theorem}\label{thm:character}
Let $G$ and $r>0$ be as above.
Let $M$ be the constant from Theorem \ref{thm:distribution characters}, and let $Q_{\fg, -r}$ be the constructible motivic function on $B_{\fg, -r}$ from the 
same theorem.
Then there exists a
constructible motivic exponential function $F$ on  $B_{\fg, -r}\times G_{0^+}\reg$ 
such that,
for every local field $\K\in {\mathcal F}_{\Z, M}$
and every $x \in B_{\fg,-r}(\rf_\K)$,
\[\frac{1}{\deg(x)}\theta_x(g)
=\frac{F_{\K}(x,g)}{Q_{\fg, -r, \K}(x)}\]
for all $g \in G(\K)_r\reg$
(where $F_\K$ is the specialization of $F$).
In particular, each function $\theta_x$ on $G(\K)_r\reg$ coincides with 
the restriction to $G(\K)_r\reg$ of a 
specialization of a constructible motivic exponential function.  
\end{theorem}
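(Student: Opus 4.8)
The plan is to reduce the computation of the character \emph{function} $\theta_x$ to that of the character \emph{distribution} $\Theta_x$, which is already under control by Theorem~\ref{thm:distribution characters}, by pairing $\Theta_x$ with a constructible family of test functions that are normalised indicator functions of small balls on which $\theta_x$ is constant. The existence of such balls, with a radius depending \emph{definably} on the element at which one evaluates, is exactly what Korman's local constancy theorem (Theorem~\ref{thm:korman}) supplies; and since all the representations under consideration have the same depth $r$, the Korman radius will depend only on the group element and on $r$, and not on the point $x \in B_{\fg, -r}(\rf_\K)$.

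Concretely, I would first fix, for each $g \in G(\K)_r\reg$, a point $x(g) \in \BB(G, \K)$ and a radius $\rho(g)$ as furnished by Theorem~\ref{thm:korman}, so that $\theta_\pi$ is constant on the coset $C_g \ceq g\,G(\K)_{x(g), \rho(g)}$ and $C_g \subseteq G(\K)_r\reg$ for \emph{every} restricted, depth-$r$, supercuspidal $\pi$. Arguing as in Section~\ref{sub: proof thm 3}, one checks that the assignment $g \mapsto (x(g), \rho(g))$ can be taken definable, so that $C_g$ and its volume $\vol(C_g)$ (a power of $q$) vary in a definable family over the parameter $g \in G_{0^+}\reg$. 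Setting $f_g \ceq \vol(C_g)\inv\, \mathbf 1_{C_g}$, the family $\{f_g\}$ is a constructible motivic family of test functions with $\supp(f_{g, \K}) \subseteq G(\K)_r\reg$, hence it satisfies Definition~\ref{def:definable equivalence}\pref{def.eq.1}. Because $\theta_\pi$ is constant on $C_g$ and (by Harish-Chandra's theorem, or its positive-characteristic analogue recalled in \S\ref{sub:r-equiv.reps}) $\Theta_\pi(f_{g, \K}) = \int_{G(\K)} \theta_\pi(h) f_{g, \K}(h)\,dh$, we obtain $\Theta_\pi(f_{g, \K}) = \theta_\pi(g) = \theta_x(g)$ whenever $\pi$ lies in the definable equivalence class labelled by $x$.

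Next I would feed the family $\{f_g\}$ into Theorem~\ref{thm:distribution characters}\pref{thm:distribution characters:motivic}. The essential point is that the construction there of the constructible motivic exponential function $H^f$ out of $f$ is uniform when $f$ ranges over a constructible family --- this uniformity is what the proof of part~\pref{thm:distribution characters:motivic} combined with part~(3) of the same theorem delivers --- so that there will be a single constructible motivic exponential function $F$ on $B_{\fg, -r} \times G_{0^+}\reg$ with $H^{f_g} = F(\cdot, g)$. Specialising and using $\Theta_x(f_{g, \K}) = \theta_x(g)$ yields
\[
\frac1{\deg(x)}\,\theta_x(g)
= \frac1{\deg(x)}\,\Theta_x(f_{g, \K})
= \frac{F_\K(x, g)}{Q_{\fg, -r, \K}(x)}
\]
for all $g \in G(\K)_r\reg$ and all $x \in B_{\fg, -r}(\rf_\K)$; on $G_{0^+}\reg \setminus G(\K)_r\reg$ one may define $F$ arbitrarily. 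The final assertion of the theorem is then immediate, since $Q_{\fg, -r}$ is a nonzero constructible motivic function and dividing a constructible motivic exponential function by such a function stays in the same class.

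The hard part will be the two uniformity statements. First, one must verify that Korman's radius can be realised as a definable function of $g$ --- here the explicit form of Korman's bound (as recorded in Theorem~\ref{thm:korman}) together with the definability of the relevant Moy--Prasad depth invariants and of the singular locus are used --- and, along the way, that the cosets $C_g$ genuinely stay inside $G(\K)_r\reg$ even for $g$ approaching the singular set, which is precisely what Korman's calibration ensures. Second, one must check that the passage $f \rightsquigarrow H^f$ in Theorem~\ref{thm:distribution characters}\pref{thm:distribution characters:motivic} is constructible in the family parameter, so that the resulting $F$ is honestly one constructible motivic exponential function on the product $B_{\fg, -r} \times G_{0^+}\reg$ rather than a pointwise-in-$g$ collection of such functions; this should follow by inspecting the proofs of parts~\pref{thm:distribution characters:motivic} and (3), but it is the step I would expect to require the most care.
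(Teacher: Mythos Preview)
Your overall strategy matches the paper's: use Korman's local constancy to replace $\theta_x(g)$ by $\Theta_x$ evaluated on a definable family of indicator functions, then invoke Theorem~\ref{thm:distribution characters}(2,3). However, there is one genuine gap and two inaccuracies worth flagging.

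\textbf{The gap: invertibility of the volume.} You assert that $\vol(C_g)$ is ``a power of $q$''. This is exactly the point that does \emph{not} come for free, and it is where the paper expends real effort. The neighbourhoods produced by Korman's theorem are (conjugates of) torus cosets $\gamma T(\K)_{r'+s(\gamma)}$, not Moy--Prasad cosets $g\,G(\K)_{x(g),\rho(g)}$; and even after one manufactures from them an open, compact, definable neighbourhood of $\gamma$ (the paper's ${\mathcal T}^0_{\gamma,\K}$), its volume is merely a constructible motivic function of $\gamma$, not an invertible one. Since one needs to \emph{divide} by this volume to pass from $\Theta_x(f_g)$ to $\theta_x(g)$, and the ring of constructible motivic functions is not a field, this is a genuine obstruction. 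The paper resolves it in Lemma~\ref{lem:U_gamma} by shrinking to a smaller definable ball $U_{\gamma}$, constructed via coordinates on a big Bruhat cell, whose volume is $\lef^{n(\gamma)}$ for a definable integer-valued $n(\gamma)$; powers of $\lef$ \emph{are} invertible. Without this step your argument does not produce an $F$ in ${\mathcal C}^{\exp}$.

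\textbf{Two smaller inaccuracies.} First, as just noted, Korman's theorem gives constancy on $\lsup{G(\K)}(\gamma T(\K)_{r'+s(\gamma)})$, not on cosets of full Moy--Prasad subgroups; you would have to argue that some definable open ball around $\gamma$ sits inside that conjugated torus-coset neighbourhood before you can use indicator functions of balls. Second, your closing sentence --- that dividing a constructible motivic exponential function by a nonzero constructible motivic function stays in the class --- is false in general (this is precisely why the theorem is stated with $Q_{\fg,-r}$ as a separate denominator rather than absorbed into $F$). The ``in particular'' clause holds only because, for each \emph{fixed} $x$, the scalar $\deg(x)/Q_{\fg,-r,\K}(x)$ is a rational constant, and the ring is a $\Q$-algebra.
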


\begin{remark}
\label{remark:dep-on-haar}
Note that both the formal degree of a representation,
and the Fourier transform of an elliptic orbital integral 
(here we mean not the distribution, but the function that represents it),
depend on the choice of Haar measure on $G(\K)$
(really, on $G(\K)/Z(G)(\K)$ and $G(\K)/T(\K)$, for $T$ an
elliptic \K-torus, respectively; but remember that we are in
the semisimple case).
On the other hand, the product of the two is well defined
(\ie, independent of Haar measure), so that the equality
asserted in Theorem \ref{thm:main-ext-ingredient} makes
sense.
(The Fourier transform also depends on a choice of Haar
measure on $\fg(\K)$, but there is a natural choice in this
setting, namely, the one for which, with the obvious
notation, $\Hat{\Hat f}(X) = f(-X)$ for all
$X \in \fg(\K)$ and $f \in \hecke(\fg(\K))$.)
Therefore, we may, and do,
fix the Haar measure that is compatible with 
motivic integration, that is, the measure that coincides with 
Serre-Oesterl\'e measure on $G({\mathcal O}_\K)$.

Theorem \ref{thm:character} shows that,
for each $x$, the function
$\theta_x$ is a specialization of a constructible motivic exponential
function; but the dependence on the formal degree at present
prevents us from showing that the function $\theta_x$
depends definably on $x$ (though we believe that this is the
case). Here, we only show that $\frac{1}{\deg(x)}\theta_x$ depends definably on $x$.
\end{remark}

These theorems will be proved in Sections \ref{sub: proof thm 2} and
\ref{sub: proof thm 3}--\ref{sub: proof thm 3 positive char}, respectively.

\subsection{Interpretation in terms of $L$-packets}
The first statement in Theorem \ref{thm:distribution characters}
gives a parameterization of restricted, depth-$r$ representations
up to $r$-equivalence, in the sense of
Definition~\ref{def: r-equiv.reps}.
The strategy for proving it is
to express the restriction of the character of such a
representation as a Fourier transform of an orbital integral
(see Theorem \ref{thm:main-ext-ingredient}),
and then to construct the parameter space
$B_{\fg,-r}$
for the orbital integrals 
that arise
using the ideas from \cite{cunningham-hales: good}.

Let $\Psi$ be the cuspidal datum that gives rise to $\pi$,
and $\Gamma_\Psi$ the element defined in Definition
\ref{defn:X*_Psi}.
Then
$$
\Theta_{\pi}(f)=\deg(\pi)\hat{\mu}_{\Gamma_{\Psi}}(f\circ \mexp^{-1})=
\deg(\pi)\mu_{\Gamma_\Psi}(\widehat{f\circ \mexp^{-1}}),
$$
 for all $f\in C_c^{\infty}(G(\K)\reg)$. 
 As we will see from the construction of 
the subassignment $B_{\fg, -r}$ in Section~\ref{sub:param}, the 
elements $\Gamma_\Psi$ are grouped according to stable conjugacy at first, 
before being grouped according to conjugacy. 
Therefore,  
it is natural to ask whether one can detect stable conjugacy of the elements 
$\Gamma_\Psi$ in terms of the cuspidal data that gave rise to them. 
This question is addressed in this section. 

In \cite{reeder:sc-pos-depth}, Reeder constructs some candidates
for $L$-packets of positive-depth, supercuspidal representations on
$G(\K)$ in case $G$ is unramified.
By \S\S3 and 6.9 \loccit, such a packet consists of the representations
$\pi_{(T', \phi')}$, where $(T', \phi')$ ranges over the
equivalence class of a toral, very supercuspidal datum $(T, \phi)$
(see Remark \ref{rem:short-cuspidal-datum})
with $T$ a maximal elliptic \K-torus in $G$ that splits over
$\K\unram$.
Following \cite{debacker-reeder:depth-zero-sc}*{\S9.4},
we introduce the notion of \term{stable conjugacy} in order to
describe the packet.
The following definition would not change if we allowed
$g$ to range over the $\K\sep$-points of $G$, for a fixed
separable closure $\K\sep/\K$, but it is more convenient for
us to phrase it as we have done.

\begin{definition}
\label{defn:toral-conj}
We say that two toral, very supercuspidal data $(T, \phi)$ and
$(\dot T, \dot\phi)$ for $G(\K)$
(respectively, two regular, semisimple elements
$X, X' \in \fg(\K)$)
are \term{stably conjugate} if and only if there exists some
element $g \in G(\K\unram)$ such that
$\Int(g)T(\K) = \dot T(\K)$ and
$\dot\phi \circ \Int(g) = \phi$
(respectively, $\Ad(g)X = X'$).
\end{definition}

\begin{remark}
\label{rem:toral-conj-depth}
Note that, if $g \in G(\K\unram)$ is as in Definition~\ref{defn:toral-conj} and $r \in \tR_{\ge 0}$,
then
$$
\Int(g)\bigl(T(\K)_r\bigr)
\subseteq
	\Int(g)\bigl(T(\K\unram)_r\bigr) \cap \dot T(\K)
= \dot T(\K\unram)_r \cap T(\K)
= \dot T(\K)_r
$$
(by the definition of the filtrations).
The same argument, with the r\^oles of $T$ and $\dot T$
reversed, shows that we have equality; \ie,  that
$\Int(g)\bigl(T(\K)_r\bigr) = \dot T(\K)_r$.
In particular, the depths of
$\phi$ and $\dot\phi$ are the same.
\end{remark}

It is reasonable to believe that a similar parameterization
to Reeder's will produce $L$-packets for toral, very
supercuspidal representations even if the relevant tori
do not split over an unramified extension
(even, given a suitable definition of
stable conjugacy, if the cuspidal datum is not toral!).

\begin{proposition}
Suppose that
${\Psi} = {(T, \phi)}$
and
${\dot\Psi} = {(\dot T, \dot\phi)}$
are stably conjugate (see Definition~\ref{defn:toral-conj}), restricted (hence toral, very supercuspidal)
data of essential depth $r$.
Then,
in the notation of Definition \ref{defn:X*_Psi},
$\mc O^\st_{-r}(\Gamma_\Psi)
	= \mc O^\st_{-r}(\Gamma_{\dot\Psi})$.
\end{proposition}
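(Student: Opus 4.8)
The plan is to show that the stable conjugacy of the data $(T,\phi)$ and $(\dot T,\dot\phi)$ forces the elements $\Gamma_\Psi$ and $\Gamma_{\dot\Psi}$ to be, up to the ambiguity built into their definition, honestly stably conjugate as elements of $\fg(\K)$, and then to finish by comparing directly the unions that define the two stable thickened orbits. Throughout I would use two observations. \textbf{(a)} Since $T = G^{0,\Psi}$ is an elliptic torus, $\Gamma_\Psi$ lies in $\mf z(\fg^0)(\K) = \ft(\K)$ (with $\ft = \Lie T$) and is regular; hence every element of $\Gamma_\Psi + \ft(\K)_{(-r)^+}$ is regular semisimple of depth $-r$ with Cartan subalgebra $\ft$, and so $\mathcal O^\st_{-r}(\Gamma_\Psi) = \bigcup_{Y \in \ft(\K)_{(-r)^+}} \mathcal O^\st(\Gamma_\Psi + Y)$ does not depend on which representative of the coset $\Gamma_\Psi + \ft(\K)_{(-r)^+}$ is used, because translation by a fixed element permutes the lattice $\ft(\K)_{(-r)^+}$. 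In particular $\mathcal O^\st_{-r}(\Gamma_\Psi)$ is well defined, and likewise for $\dot\Psi$. \textbf{(b)} If $g \in G(\K\unram)$ and both $Z$ and $\Ad(g)Z$ lie in $\fg(\K)$, then they are stably conjugate (Definition~\ref{defn:toral-conj}), hence $\mathcal O^\st(Z) = \mathcal O^\st(\Ad(g)Z)$.

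The crux is the following: if $g \in G(\K\unram)$ witnesses the stable conjugacy of the data---so $\Int(g)T(\K) = \dot T(\K)$ and $\dot\phi \circ \Int(g) = \phi$ on $T(\K)$---then $\Ad(g^{-1})\Gamma_{\dot\Psi} \equiv \Gamma_\Psi \pmod{\ft(\K)_{(-r)^+}}$. Here I would take the nice identification of $\fg(\K)$ with $\fg^*(\K)$ to be the one given by the trace form $\langle\cdot,\cdot\rangle$ in the matrix realisation (it is $G$-equivariant over $\K\unram$), identifying $\Gamma_\Psi$ with $X^*_\Psi = X^*_{0,\Psi}$ accordingly (recall $d_\Psi = 1$ by Remark~\ref{rem:res-sc-1-step}). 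By construction (Definition~\ref{defn:X*_Psi}), $\phi(\mexp(Y)) = \Lambda(\langle \Gamma_\Psi, Y\rangle)$ for all $Y \in \ft(\K)_r$ (where $\mexp$ denotes the graded isomorphism $\ft(\K)_{r:r^+} \to T(\K)_{r:r^+}$), and similarly for $\dot\Psi$. Applying $\dot\phi \circ \Int(g) = \phi$ to $\mexp(Y)$, using the equivariance of $\mexp$ (so $\Int(g)\mexp(Y) = \mexp(\Ad(g)Y)$, the graded isomorphism attached to a torus being independent of the building point), Remark~\ref{rem:toral-conj-depth} and its evident Lie-algebra analogue (so $\Ad(g)Y \in \Ad(g)\ft(\K)_r = \dot\ft(\K)_r$), and cyclicity of the trace, one obtains $\Lambda(\langle \Gamma_\Psi, Y\rangle) = \Lambda(\langle \Gamma_{\dot\Psi}, \Ad(g)Y\rangle) = \Lambda(\langle \Ad(g^{-1})\Gamma_{\dot\Psi}, Y\rangle)$ for all $Y \in \ft(\K)_r$. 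Both $\Gamma_\Psi$ and $\Ad(g^{-1})\Gamma_{\dot\Psi}$ lie in $\ft(\K)_{-r}$ (the latter again by the analogue of Remark~\ref{rem:toral-conj-depth}), and the Moy--Prasad duality attached to the trace form identifies $\ft^*(\K)_{-r}/\ft^*(\K)_{(-r)^+}$ with the character group of $\ft(\K)_r/\ft(\K)_{r^+}$ via $Y \mapsto \Lambda(\langle\cdot,Y\rangle)$; hence the previous equality yields $\Ad(g^{-1})\Gamma_{\dot\Psi} \equiv \Gamma_\Psi \pmod{\ft(\K)_{(-r)^+}}$. This translation step---keeping the $\K\unram$-point $g$ compatible with the a priori $\K$-rational correspondence $\phi \leftrightarrow \Gamma$ and with all the Moy--Prasad filtrations---is the main obstacle; the rest is formal.

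To finish, by the congruence just proved and observation \textbf{(a)} for $\dot\Psi$, we may take $\Gamma_{\dot\Psi} = \Ad(g)\Gamma_\Psi$ as the representative (it lies in $\Ad(g)\ft(\K) = \dot\ft(\K)$ and is regular semisimple of depth $-r$, as \textbf{(a)} requires). Then $Y \mapsto \Ad(g)Y$ is a bijection $\ft(\K)_{(-r)^+} \to \dot\ft(\K)_{(-r)^+}$ (Remark~\ref{rem:toral-conj-depth} and its Lie-algebra analogue), and for each such $Y$ both $Z := \Gamma_\Psi + Y$ and $\Ad(g)Z = \Gamma_{\dot\Psi} + \Ad(g)Y$ lie in $\fg(\K)$, so observation \textbf{(b)} applies; therefore
\[
\mathcal O^\st_{-r}(\Gamma_{\dot\Psi})
= \bigcup_{Y \in \ft(\K)_{(-r)^+}} \mathcal O^\st\!\bigl(\Ad(g)(\Gamma_\Psi + Y)\bigr)
= \bigcup_{Y \in \ft(\K)_{(-r)^+}} \mathcal O^\st(\Gamma_\Psi + Y)
= \mathcal O^\st_{-r}(\Gamma_\Psi),
\]
which is the assertion. (One could instead avoid choosing a special representative and substitute $Z = \Ad(g)Y$ directly into the union defining $\mathcal O^\st_{-r}(\Gamma_{\dot\Psi})$, re-indexing by the same congruence; the bookkeeping is identical.)
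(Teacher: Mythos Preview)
Your proof is correct and follows essentially the same route as the paper's: fix $g \in G(\K\unram)$ witnessing the stable conjugacy, use the equivariance of \mexp over $G(\K\unram)$ together with the defining relation $\phi(\mexp(Y)) = \Lambda(\langle \Gamma_\Psi, Y\rangle)$ to show that $\Ad(g)\Gamma_\Psi$ and $\Gamma_{\dot\Psi}$ agree modulo $\dot\ft(\K)_{(-r)^+}$, then conclude. The paper phrases the congruence on the $\fg^*$ side (showing $\Ad^*(g)X^*_\Psi \equiv X^*_{\dot\Psi}$) and simply declares at the outset that this congruence suffices, whereas you work on the $\fg$ side via the trace form and spell out explicitly, in observation \textbf{(a)} and the final paragraph, why the congruence forces $\mc O^\st_{-r}(\Gamma_\Psi) = \mc O^\st_{-r}(\Gamma_{\dot\Psi})$; that extra detail is a welcome addition.
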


\begin{proof}
Suppose that the conjugacy of $\Psi$ and $\dot\Psi$ is
afforded by the element $g \in G(\K\unram)$.
It suffices to show that, in the notation of Definition
\ref{defn:X*_Psi},
$\Ad^*(g)X^*_\Psi \equiv X^*_{\dot\Psi}
	\pmod{\dot\ft^*(\K)_{(-r)^+}}$.

Put $\dot X{}^*_\Psi = \Ad^*(g)X^*_\Psi$.
The images in $\BB(G, \K)$ of $\BB(T, \K)$ and
$\BB(\dot T, \K)$ are singletons.  Write $x$ and
$\dot x$, respectively, for their elements.
Since $X^*_\Psi \in \ft^*(\K)$, we have that
$\dot X{}^*_\Psi \in \dot\ft^*(\K)$.
Fix $\dot Y \in \dot\ft(\K)_r$.
Reasoning as in Remark \ref{rem:toral-conj-depth} shows that
$Y \ldef \Ad(g)\inv\dot Y$ lies in $\ft(\K)_r$.
Since
$\mexp_{\dot x, r:r^+}(\dot Y)
= \Int(g)\mexp_{x, r:r^+}(Y)$
by our choice of \mexp (which, remember, admits an
equivariance property with respect to conjugation, not just
by $G(\K)$, but also by $G(\K\unram)$%
\ins{; see page \pageref{exp-equiv}}%
), we have by the
definition of the elements $X^*_\Psi$ and $\dot X{}^*_\Psi$ that
\begin{multline*}
\Lambda(\langle X^*_{\dot\Psi}, \dot Y\rangle)
= \dot\phi(\mexp_{\dot x, r:r^+}(\dot Y))
= \dot\phi(\Int(g)\mexp_{x, r:r^+}(Y)) \\
= \phi(\mexp_{x, r:r^+}(Y))
= \Lambda(\langle X^*_\Psi, Y\rangle)
= \Lambda(\langle\dot X{}^*_\Psi, \dot Y\rangle).
\end{multline*}
Since $\dot Y \in \dot\ft(\K)_r$ was arbitrary, we
obtain the desired congruence.
\end{proof}

\section{Distributions and bad primes}\label{sec:badprimes}
We would like to consider distributions in the context of motivic integration. 
In this paper, we express the value of the distribution character 
$\Theta_{\pi}$ at a test function $f\in C_c^{\infty}(G(\K))$ as a 
motivic integral, when the support of $f$ is contained in a sufficiently small neighbourhood 
of the identity element.   Typically, as we perform motivic integration, we 
need to discard a finite number of primes. Therefore, there arises the problem 
that if for different test functions we need to discard different primes, 
then we might not be able to make any conclusion about the distribution on 
the whole for any fields. In this section  
we show that this is in fact 
not the case.

Note that
Theorem \ref{thm:character}, concerning Harish-Chandra characters,
reduces the need to discuss
distribution characters.
We include this section nonetheless, partly because it 
allows us to establish
Theorem \ref{thm:distribution characters}(1), but also because we hope 
that the simple observations explored here will be 
helpful for many similar situations in other contexts. 

\subsection{A dense family of definable functions}
Let $S_d$ be the definable subassignment of $h[d,0,0]$ defined by 
$K\mapsto K\llbracket t\rrbracket^d$.
Here, as everywhere in this paper, we use the language 
with coefficients in $\Z\llbracket t\rrbracket$ in the valued-field sort.

Let us consider the $\C$-algebra generated by the specializations 
of constructible functions, that is, 
$$
{\mathcal C}_{\K, \C}(\A^d(\K)):={\mathcal C}_{\K}(\A^d(\K))\otimes_{\Q}\C.$$
Similarly, for any definable subassignment $S$, we can consider 
$$
{\mathcal C}_{\K, \C}(S_{\K}):={\mathcal C}_{\K}(S_{\K})\otimes_{\Q}\C.$$
(see \cite{CHL}*{\S2.9} for a discussion of why tensoring with $\C$ is 
compatible with motivic integration and the definition of the class of 
integrable functions).

The proposition below holds for a general base field, but we state it for
 $k=\Q$ for simplicity, since this is all we need.

\begin{proposition}\label{prop: badprimes}
Fix a positive integer $d$. Let $\eta_{a}\in{\mathcal C}(S_d)\otimes \C$ 
be the family of constructible 
motivic functions , where 
$a=(a_1,\dots a_d, r_1\dots, r_d)$ is an element of 
$\Q\llbracket t\rrbracket^d\times \Z^d$, defined by
$$
\eta_{a}(x_1,\dots, x_d)=
\begin{cases}
1, & \ord(x_i-a_i)\ge r_i \quad \text{for}\quad  i=1,\dots, d\\
0 ,  & \text{otherwise.}
\end{cases}
$$
Suppose $\{f_{\alpha}\}_{\alpha\in A}$ 
is a definable family of definable 
functions on $S_d$, parameterized by some definable subassignment $A$.
Then there exists $M>0$ such that 
for all fields $\K\in {\mathcal F}_{\Z, M}$
there is a well-defined specialization $\eta_{a,\K}$ that is a function 
on ${\mathcal O}_{\K}^d$, and
the motivic integral 
$\displaystyle\int_{S_d}f_{\alpha}\eta_{a}$ specializes to
$\displaystyle\int_{S_{d, \K}}f_{\alpha}\eta_{a,\K}$. 
\end{proposition}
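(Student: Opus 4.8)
The plan is to reduce the statement to the general specialization principle for motivic integrals (as in \cite{CL}, especially the transfer/specialization theorems there), the only subtlety being that the family $\eta_a$ is indexed by a parameter $a$ living partly in the valued-field sort ($a_i \in \Q\llbracket t\rrbracket$) and partly in the $\Z$-sort ($r_i \in \Z$), rather than being a single constructible motivic function. First I would observe that the condition ``$\ord(x_i - a_i) \ge r_i$ for all $i$'' is a formula in $\mathcal L_\Z$ in the free variables $(x_1,\dots,x_d)$ of valued-field sort, $(a_1,\dots,a_d)$ of valued-field sort, and $(r_1,\dots,r_d)$ of $\Z$-sort; hence its characteristic function is a constructible motivic function on $S_d \times S_d \times h[0,0,d]$, i.e. an element $\eta \in \mathcal C(S_d \times P)$ where $P := S_d \times h[0,0,d]$ is the parameter subassignment. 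So $\eta$ is a genuine constructible motivic function on the product, and $\eta_a$ is its fibre over the point $a \in P$.

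Next I would set up the product integral. Combining $\eta$ with the definable family $\{f_\alpha\}_{\alpha \in A}$ (whose total space is a definable subassignment $F \subseteq S_d \times A$ with a constructible, in fact definable, function on it), one forms the constructible motivic function on $S_d \times A \times P$ given by $(x,\alpha,a) \mapsto f_\alpha(x)\eta_a(x)$. Relative motivic integration over the $S_d$-factor (integration along the projection $S_d \times A \times P \to A \times P$, which exists because $S_d$ is a $d$-dimensional affine subassignment and the integrand is a constructible function supported on balls of full dimension, so the relevant Fubini and integrability hypotheses of \cite{CL} are met) produces a constructible motivic function $I \in \mathcal C(A \times P)$, whose fibre at $(\alpha,a)$ is exactly $\int_{S_d} f_\alpha \eta_a$. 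Now the specialization theorem for constructible motivic functions and their integrals (\cite{CL}; see also \cite{CL.expo}*{\S6.7}) gives an integer $M > 0$, depending only on the formulas defining $\eta$, the family $f_\alpha$, and the parameter subassignments, such that for every $\K \in \mathcal F_{\Z,M}$ with a chosen uniformizer: the subassignments $S_d$, $A$, $P$, and $F$ all specialize; $\eta$ specializes to the function whose value at $(x,a)$ is the indicator of $\{\ord(x_i-a_i)\ge r_i\ \forall i\}$, so that in particular $\eta_{a,\K}$ is a well-defined function on $\mathcal O_\K^d$ for each $a \in P_\K$; and $I$ specializes fibrewise, i.e. $I_\K(\alpha,a) = \int_{S_{d,\K}} f_{\alpha,\K}\,\eta_{a,\K}$. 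Restricting to a fixed fibre $\alpha$ (or, if one prefers, to a fixed point $a$) gives precisely the asserted equality $\int_{S_d} f_\alpha \eta_a \mapsto \int_{S_{d,\K}} f_\alpha \eta_{a,\K}$.

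The main obstacle, and the point that needs care rather than routine checking, is the \emph{uniformity of $M$ over the parameter $a$}: one must make sure the bad-prime bound coming out of the specialization theorem depends only on the \emph{definable data} ($\eta$ as a constructible function on the product, the family $f_\alpha$, the subassignments), and not on the individual point $a \in \Q\llbracket t\rrbracket^d \times \Z^d$. This is exactly why it is essential to package $\eta_a$ as the fibre of a single constructible motivic function $\eta$ over the parameter subassignment $P$ and to integrate \emph{relatively} over $S_d$, keeping $A \times P$ as the base: the specialization theorem then yields one $M$ for the whole relative integral $I \in \mathcal C(A \times P)$ at once. A secondary point to verify is integrability: $f_\alpha \eta_a$ is compactly supported (the conditions $\ord(x_i-a_i)\ge r_i$ cut out a product of balls, which has finite measure) and constructible, so it lies in the class of integrable Functions of \cite{CL}, and relative integrability over the $S_d$-direction holds with the same bad-prime set. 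Once these uniformity and integrability points are in place, the conclusion is a direct application of the cited specialization results, so I would not grind through the estimates.
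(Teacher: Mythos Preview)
Your proposal is correct and follows essentially the same approach as the paper: the key move is to treat the parameters $a_i$ and $r_i$ as extra free variables, so that $\eta$ becomes a single constructible motivic function on the product $S_d \times P$, and then to invoke the specialization theorems with parameters (the paper cites \cite{CL.expo}*{Theorems 6.9 and 7.3}) to obtain a uniform $M$. Your discussion of uniformity and integrability is more explicit than the paper's one-sentence proof, but the content is the same.
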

\begin{proof}
This proposition follows directly from the specialization theorems 
(with parameters) 
\cite{CL.expo}*{Theorems 6.9 and 7.3},
since we can simply consider the $a_i$ and $r_i$ 
as extra variables of the functions $\eta_{a}$.
\end{proof}

\begin{corollary}\label{cor: distr}
Let $S$ be a definable subassignment of $h[d,0,0]$ for some $d> 0$.
Assume that $S$ is contained in the subassignment defined by $\ord(x_i)\ge N$,
$i=1,\dots, d$, 
for some $N\in \Z$.
Then exists $M$, such that for any 
$\K\in {\mathcal F}_{\Z, M}$ the algebra  
${\mathcal C}_{\K, \C}(S_{\K})$ is dense in the algebra of 
complex-valued continuous functions on $S_{\K}$ (with $\sup$-norm).
In particular, if two distributions defined on $S_{\K}$ and 
continuous with respect to the $\sup$-norm,  
coincide on ${\mathcal C}_{\K, \C}(S_{\K})$, then they are identical. 
\end{corollary}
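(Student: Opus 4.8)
The plan is to deduce density from Proposition \ref{prop: badprimes} by observing that the functions $\eta_{a,\K}$, for suitable $a$, are exactly the indicator functions of the balls making up a basis for the topology on $S_\K$, and that finite $\C$-linear combinations of such indicator functions are dense in $C(S_\K)$ by a Stone--Weierstrass type argument. First I would fix $M$ large enough that for every $\K \in \mathcal F_{\Z, M}$ the specializations in Proposition \ref{prop: badprimes} (applied to, say, the trivial definable family $f_\alpha \equiv 1$, or simply to the graph of the identity) are well defined, so that each $\eta_{a,\K}$ is a genuine $\{0,1\}$-valued function on $\mathcal O_\K^d$; restricting to $S_\K$ and using the hypothesis $\ord(x_i) \ge N$ (which only shifts and rescales, and is harmless), I get that the restriction of $\eta_{a,\K}$ to $S_\K$ lies in $\mathcal C_{\K}(S_\K)$ — indeed it is the reduction of a definable set, hence a constructible motivic function — so it lies a fortiori in $\mathcal C_{\K,\C}(S_\K)$.

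Next I would invoke compactness of $S_\K$: since $S$ is a definable subassignment of $h[d,0,0]$ contained in $\{\ord(x_i) \ge N\}$, its specialization $S_\K$ is a closed subset of the compact set $(\mathfrak p_\K^N)^d$, hence compact, and the balls $\{x : \ord(x_i - a_i) \ge r_i,\ i = 1, \dots, d\}$ form a basis of clopen sets for its topology. The $\C$-algebra $\mathcal C_{\K,\C}(S_\K)$ contains all indicator functions of such balls intersected with $S_\K$, contains the constants, is closed under complex conjugation (it is the complexification of a $\Q$-algebra), and separates points of $S_\K$ (given $x \ne y$ in $S_\K$, some coordinate differs, and a ball around $x$ of sufficiently large radius in that coordinate excludes $y$). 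By the Stone--Weierstrass theorem for complex-valued continuous functions on a compact Hausdorff space, $\mathcal C_{\K,\C}(S_\K)$ is dense in $C(S_\K)$ with respect to the sup-norm. For the final sentence: if $D_1, D_2$ are distributions on $S_\K$ continuous for the sup-norm and agreeing on the dense subalgebra $\mathcal C_{\K,\C}(S_\K)$, then they agree on its sup-norm closure, which is all of $C(S_\K)$, so $D_1 = D_2$.

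The only real subtlety — the step I would be most careful about — is checking that the relevant indicator functions genuinely lie in $\mathcal C_{\K,\C}(S_\K)$ uniformly in $\K$ for $p > M$: this is precisely what Proposition \ref{prop: badprimes} (via the specialization theorems \cite{CL.expo}*{Theorems 6.9 and 7.3}) provides, since the family $\eta_a$ is constructible motivic with $a$ ranging over the extra valued-field and $\Z$-sort variables, so a single $M$ works for the whole family at once. The remaining ingredients — compactness of $S_\K$, the clopen basis, and Stone--Weierstrass — are standard and field-independent once $M$ is fixed, so no further discarding of primes is needed; this is exactly the point the section is making. A minor point to record is that $\mathcal C_{\K,\C}(S_\K)$ is closed under products of such indicators (an intersection of balls is again a ball, or empty), which makes the subalgebra claim immediate rather than requiring one to take the algebra generated.
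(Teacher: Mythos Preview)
Your proposal is correct and follows essentially the same route as the paper: reduce to the family $\eta_a$ of Proposition~\ref{prop: badprimes} (after a definable rescaling to handle the bound $\ord(x_i)\ge N$), observe that these separate points, and conclude by Stone--Weierstrass. The paper's own proof is just a terser version of yours---it records the embedding $\phi:S\to S_d$ by multiplying by $t^{-N}$, pulls back the $\eta_a$, and invokes Stone--Weierstrass in a single line, without spelling out the conjugation-invariance, constants, or compactness hypotheses that you check explicitly.

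One small caution: your assertion that $S_\K$ is \emph{closed} in $(\mathfrak p_\K^N)^d$ is not justified---definable sets need not be closed (e.g.\ the units in $\mathcal O_\K$)---so the compactness input to Stone--Weierstrass is not quite as automatic as you claim. The paper glosses over this point entirely, so you are not missing anything relative to the paper, but if you wanted to be fully rigorous you would either work on the closure $\overline{S_\K}$ or note that the intended application only requires approximating locally constant compactly supported functions, for which the ball indicators suffice directly.
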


\begin{proof}
By multiplying $x_i$ by $t^{-N}$ in the case $N<0$, one can make a definable 
embedding $\phi:S\to S_d$, where $S_d$ is the definable 
subassignment $K\mapsto K\llbracket t\rrbracket^d$ as above.
Then it follows directly from the proposition  that the family 
$\{\xi_{a}=\phi^{\ast}(\eta_{a})\}$ separates points in $S_{\K}$.
The statement now follows from Stone--Weierstrass theorem.
\end{proof}


Though we do not currently have a general framework for 
``constructible motivic distributions'', 
Corollary \ref{cor: distr} allows us to handle many distributions in the 
context of motivic integration.

\begin{remark}
The theory of motivic integration recently developed by 
E.~Hrushovski and D.~Kazhdan \cites{HK,Yimu,Yimu3} 
includes distributions; 
however, it is not yet known if 
this theory specializes to $p$-adic integration in the same way as the 
theory we are presently using. If this specialization were shown, 
it might yield an alternative approach to the prior discussion.
\end{remark}

\subsection{Distribution characters} Now we are ready to prove that 
definable $r$-equivalence of representations (see Definition 
\ref{def:definable equivalence}) coincides with $r$-equivalence (see 
Definition \ref{def: r-equiv.reps}).

\begin{lemma}\label{lem:equivalence} 
Let $\K\in {\mathcal F}_{\Z}$ be a field. 
Then two $r$-restricted supercuspidal representations
$\pi$ and $\pi'$ of $G(\K)$ 
are $r$-equivalent if and only if they are definably $r$-equivalent. 
\end{lemma}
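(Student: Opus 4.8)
The plan is to prove the two implications separately, with the non-trivial direction being that $r$-equivalence implies definable $r$-equivalence. First I would dispose of the easy direction. Suppose $\pi$ and $\pi'$ are definably $r$-equivalent, and let $\{f_a\}_{a \in S}$ be a family witnessing this, with the associated family $\{{\mathcal K}_n\}$. By Harish-Chandra's theorem (\ref{eq:HC}) applied on $G(\K)\reg$, for each $a$ with $\supp f_{a,\K} \subseteq G(\K)_r\reg$ we have $\Theta_\pi(f_{a,\K}) = \int \theta_\pi(g) f_{a,\K}(g)\,dg$, and similarly for $\pi'$; so Definition \ref{def:definable equivalence}(\ref{def.eq.3}) says $\int (\theta_\pi - \theta_{\pi'}) f_{a,\K} = 0$ for all $a$. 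Since $\theta_\pi$ and $\theta_{\pi'}$ are locally constant on $G(\K)\reg$ and, by Definition \ref{def:definable equivalence}(\ref{def.eq.2}), the relevant $f_{a,\K}$ span a dense subspace of $C_c^\infty(G(\K)_r\reg \cap {\mathcal K}_{n,\K})$ for every $n$ (with $\bigcup_n {\mathcal K}_{n,\K} = G(\K)_{0^+}\reg \supseteq G(\K)_r\reg$), we conclude $\theta_\pi = \theta_{\pi'}$ on $G(\K)_r\reg$; that is, $\pi$ and $\pi'$ are $r$-equivalent in the sense of Definition \ref{def: r-equiv.reps}.

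For the converse, suppose $\pi$ and $\pi'$ are $r$-equivalent, so $\theta_\pi = \theta_{\pi'}$ on the regular semisimple locus of $G(\K)_r$. I want to produce a constructible family $\{f_a\}$ satisfying Definition \ref{def:definable equivalence}. The idea is to use the characteristic functions of small balls as the test family: on $G_{0^+}\reg$ (which we showed is definable in \S\ref{subsub:definable}), transport via $\mexp^{-1}$ to $\fg_{0^+}\reg$ and there use the family $\eta_a$ of Proposition \ref{prop: badprimes}, i.e. indicator functions of cosets $X + \fg(\K)_{x,s^+}$-type balls, restricted to the regular set; pulling back along $\mexp$ gives a definable family $\{f_a\}$ whose specializations $f_{a,\K}$ are indicator functions of small balls inside $G(\K)_{0^+}\reg$, and we keep only those $a$ for which the ball lies inside $G(\K)_r\reg$ — this is a definable condition, giving condition (\ref{def.eq.1}). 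For condition (\ref{def.eq.2}) one takes ${\mathcal K}_n$ to be the definable subassignment cut out by bounding the ``size'' parameters (depth of the ball, and a bound keeping the ball well inside the regular locus) by $n$; Corollary \ref{cor: distr}, or directly Stone--Weierstrass together with Proposition \ref{prop: badprimes}, shows that on each compact ${\mathcal K}_{n,\K}$ the span of these indicator functions is dense in $C_c^\infty({\mathcal K}_{n,\K} \cap G(\K)_r\reg)$ in the sup-norm, since the balls separate points and their indicators form an algebra under the appropriate refinement. Finally, condition (\ref{def.eq.3}): since each $f_{a,\K}$ is supported in $G(\K)_r\reg$ where $\theta_\pi = \theta_{\pi'}$, Harish-Chandra's integral formula (\ref{eq:HC}) gives $\Theta_\pi(f_{a,\K}) = \int \theta_\pi f_{a,\K} = \int \theta_{\pi'} f_{a,\K} = \Theta_{\pi'}(f_{a,\K})$. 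Hence $\pi$ and $\pi'$ are definably $r$-equivalent.

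The main obstacle I expect is in verifying condition (\ref{def.eq.2}) carefully: I need the $\{{\mathcal K}_n\}$ to be a genuine definable family exhausting $G_{0^+}\reg$, each ${\mathcal K}_{n,\K}$ compact, and the chosen indicator functions $f_{a,\K}$ to be both supported in $G(\K)_r\reg$ and sufficiently fine to be dense in $C_c^\infty({\mathcal K}_{n,\K} \cap G(\K)_r\reg)$. The subtlety is matching the ``support in $G(\K)_r\reg$'' constraint from (\ref{def.eq.1}) with the exhaustion-and-density requirement: a ball centered at a point of $G(\K)_r\reg$ may stick out of $G(\K)_r\reg$ or hit the non-regular locus, so one must shrink the radius as a definable function of how close the center is to the boundary of $G(\K)_r\reg$ and to the singular locus. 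This requires knowing that the regular locus and the Moy--Prasad set $G(\K)_r$ are ``definably open'' with a definable notion of distance to their complements, which follows from definability of regularity (\cite{gordon-hales:transfer}*{Definition 14}) and of the Moy--Prasad filtration as used in \S\ref{subsub:definable}. Once this bookkeeping is set up, the density is a routine application of Stone--Weierstrass via Proposition \ref{prop: badprimes}, and the rest is formal.

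Note that this also justifies the notation $\Theta_x$ and $\theta_x$ introduced after Theorem \ref{thm:character}: combined with Theorem \ref{thm:distribution characters}(1), it shows that the definable $r$-equivalence class, hence the restriction to $G(\K)_r\reg$ of both the distribution character and the character function, is determined by the point $x \in B_{\fg,-r}(\rf_\K)$.
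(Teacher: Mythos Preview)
Your proof is correct, but you have inverted which direction the paper treats as trivial. The paper disposes of ``$r$-equivalent $\Rightarrow$ definably $r$-equivalent'' in one word (``Obviously''), because a family satisfying conditions (\ref{def.eq.1}) and (\ref{def.eq.2}) has already been produced in \S\ref{sec:badprimes} (Proposition~\ref{prop: badprimes}, Corollary~\ref{cor: distr}, and the family $\xi_{a,n}$ of \S\ref{subsub:thefamily}); once such a family exists, condition (\ref{def.eq.3}) is immediate from (\ref{eq:HC}) and the hypothesis $\theta_\pi=\theta_{\pi'}$ on $G(\K)_r\reg$. Your longer argument for this direction is not wrong---you are simply re-doing, with extra care about supports and the exhaustion $\{\mathcal K_n\}$, the construction the paper has placed elsewhere.

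Conversely, what you call the ``easy'' direction (definably $r$-equivalent $\Rightarrow$ $r$-equivalent) is the one the paper actually writes out. Your argument and the paper's are the same in substance: both use local constancy of $\theta_\pi,\theta_{\pi'}$ on $G(\K)\reg$ together with sup-norm density of the test family on each compact $\mathcal K_{n,\K}\cap G(\K)_r\reg$ to force $\theta_\pi=\theta_{\pi'}$. The paper phrases it as a contradiction (pick $x_0$ with $\theta_\pi(x_0)\neq\theta_{\pi'}(x_0)$, find a neighbourhood $U$ of constancy, approximate $\mathbf 1_U$), while you phrase it directly; these are equivalent. So the mathematics matches, only the allocation of effort differs.
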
 

\begin{proof}
Obviously, $r$-equivalence implies definable $r$-equivalence.
Let us prove the converse.
Let $\pi$ and $\pi'$ be definably $r$-equivalent representations.
We need to show that the restrictions of the functions 
$\theta_{\pi}$ and $\theta_{\pi'}$ coincide on $G(\K)_r\reg$.
Suppose this is not so, so that there exists $x_0\in G(\K)_r\reg$ such that
$\theta_{\pi}(x_0)\neq \theta_{\pi'}(x_0)$. Since both
$\theta_{\pi}$, and $\theta_{\pi'}$ are locally constant on $G(\K)_r\reg$, 
there exists a neighbourhood $U$ of $x_0$ such that these functions are 
constant on $U$ with different values. 
Let ${\bf 1}_U$ be the characteristic function of $U$. 

By the definition of definable equivalence, there exists a definable compact
set
${\mathcal K}_{n, \K}$ that contains $U$, and such that 
the span of the set of functions from the family $f_{a, \K}$ of Definition 
\ref{def:definable equivalence} with supports 
contained in ${\mathcal K}_{n, \K}$ is dense in 
$C_c^{\infty}(G(\K)_r\reg\cap {\mathcal K}_{n, \K})$.
Thus, on the one hand, 
for every $\varepsilon$, there exists a function 
$f_{\varepsilon}$ in the span of this family, with 
$\supp(f_{\varepsilon})\subset {\mathcal K}_{n, \K}$, such that 
$\Theta_{\pi}(f_{\varepsilon})=\Theta_{\pi'}(f_{\varepsilon})$, and such that
$$\sup_{{\mathcal K}_{n, \K}} |f_{\varepsilon}-{\bf 1}_{U}|<\varepsilon.$$
On the other hand, as $\varepsilon$ gets small,
$\Theta_{\pi}(f_{\varepsilon})$ has to get arbitrarily 
close to $\theta_{\pi}(x_0)\vol(U)$, whereas
$\Theta_{\pi'}(f_{\varepsilon})$ has to get close to 
 $\theta_{\pi'}(x_0)\vol(U)$, leading 
to a contradiction.
\end{proof}

\subsubsection{A definable family of test functions}\label{subsub:thefamily}
Finally, in order to talk about character distributions in the 
motivic context, 
we construct a definable family of definable functions, essentially, made from 
the family of functions 
$\{\eta_{a}\}_{a\in \Q\llbracket t\rrbracket^d\times \Z^d}$ of Proposition \ref{prop: badprimes}, 
to test the distributions for definable equivalence. Here we have to be 
somewhat careful about the number of restrictions we impose on the residue 
characteristic. As shown in Section \ref{subsub:definable}, there is a 
definable subassignment  $G_{0^+}\reg$ of $G$ such that 
${G_{0^+, \K}\reg}=G(\K)_{0^+}\reg$ for all 
$\K\in {\mathcal F}_{\Z, M}$, with some $M>0$ that depends only on $G$. 
There are finitely many (independent of $\K$) 
conjugacy classes of maximal compact subgroups 
in $G(\K)$, and one can choose a definable compact subgroup 
in each conjugacy class (given $G$, it is possible to write down 
explicit conditions on the matrix entries defining these compact subgroups). 
Let $\Omega$ be a definable subassignment of $G$ such that $\Omega_{\K}$ 
is a union of definable compact subgroups, and every compact 
element in $G(\K)$ is conjugate to an element of $\Omega_{\K}$. 
Let $\Omega_n$ be the subassignment of $\Omega$ defined by 
$\ord\left(D^{\fg}(\mexp^{-1}(g)\right)\le n$, where $D^{\fg}$ is Weyl 
discriminant. 
Then $\{\Omega_n\}$ is a definable family of definable compact subsets of 
$\Omega\cap G\reg$.
Finally, let $\{\xi_{a,n}\}$ be the definable family of definable functions
from Corollary \ref{cor: distr} for the subassignment $\Omega_n$. 
Let $M_{\xi}$ be a constant such that 
for all $n>0$, for all $\K\in {\mathcal F}_{\Z, M_{\xi}}$, the family 
$\{\xi_{a,n}\}$ specializes to 
a dense family of functions with supports contained in ${\Omega_{n,\K}}$. 

We will use the family $\{\xi_{a,n}\}$ in Section \ref{sub: proof thm 2}
in order to establish definable equivalence of characters. We will also need

\begin{lemma}\label{lem:thefunctions}
Suppose $\{f_{\alpha}\}_{\alpha\in A}$ is a constructible family of constructible motivic 
functions. 
Then there exists $M\ge M_{\xi}$ (that depends on the family $\{f_{\alpha}\}$), such that 
$\displaystyle\int_G f_{\alpha}\xi_{a,n}$ specializes to $\displaystyle\int_{G(\K)} f_{\alpha, \K}\xi_{a,n, \K}$
for all $\K\in {\mathcal F}_{\Z, M}$.
\end{lemma}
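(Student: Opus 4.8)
The plan is to deduce the lemma directly from the Cluckers--Loeser specialization theorems with parameters, \cite{CL.expo}*{Theorems 6.9 and 7.3}, exactly as in the proof of Proposition~\ref{prop: badprimes}; the point is to fold the indices $a$ and $n$ into a single family of constructible motivic functions. First I would recall from \S\ref{subsub:thefamily} that $\{\xi_{a,n}\}$ is \emph{one} fixed definable family of definable functions on $G$, built from the functions $\eta_a$ of Proposition~\ref{prop: badprimes} by restriction to the subassignments $\Omega_n$ (which are definable, being cut out inside a definable union of compact subgroups by the definable condition $\ord(D^\fg(\mexp^{-1}(g)))\le n$, and which lie in $G\reg$) and composition with the definable map $\mexp^{-1}$. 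Thus there is a constant $M_\xi$ beyond which the whole family specializes correctly, together with a constructible motivic function $\Xi$ on $G$ whose parameters $(a,n)$ range over a definable subassignment $P$ (a product of some $h[d,0,0]$ with $\Z$-sort coordinates, for the $r_i$ and for $n$), and whose specialization is $\xi_{a,n}$, supported on the compact set $\Omega_{n,\K}\subseteq G(\K)\reg$.

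Next I would form the product $f_\alpha\cdot\Xi$. Since multiplication is internal to the ring ${\mathcal C}$ of constructible motivic functions, this is again a constructible motivic function on $G$, now carrying parameters $(\alpha,a,n)$ in the definable subassignment $A\times P$; and because $\Xi$ forces each specialized fibre to be supported in the compact set $\Omega_{n,\K}$, the product is integrable over $G$ fibrewise over $A\times P$, with respect to the motivic (Serre--Oesterl\'e) Haar measure fixed in Remark~\ref{remark:dep-on-haar}. Applying \cite{CL.expo}*{Theorems 6.9 and 7.3} to this family then produces a constant $M'>0$, depending only on the formulas defining $f_\alpha\cdot\Xi$ and $A\times P$, such that for every $\K\in{\mathcal F}_{\Z,M'}$ the specialization of the motivic integral $\int_G f_\alpha\,\Xi$ equals $\int_{G(\K)} f_{\alpha,\K}\,\xi_{a,n,\K}$, uniformly in $(\alpha,a,n)$. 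As the family $\{\xi_{a,n}\}$, hence $\Xi$, is fixed once and for all, $M'$ depends only on $\{f_\alpha\}$ (and on $G$ and $d$), so we may take $M=\max(M_\xi,M')\ge M_\xi$.

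The one point that needs attention --- and the reason $n$ is built into the family $\{\xi_{a,n}\}$ rather than handled case by case --- is that a \emph{single} bound $M$ must work simultaneously for all $n$, rather than one that grows with $n$. This is precisely what the specialization theorems with parameters provide once $n$ is regarded as one of the free $\Z$-sort variables, in the same way the $a_i$ and $r_i$ were treated in Proposition~\ref{prop: badprimes}; so I do not anticipate any genuine obstacle beyond the bookkeeping described above.
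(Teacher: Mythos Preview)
Your proposal is correct and follows essentially the same approach as the paper, which simply reads: ``This follows from the specialization theorems \cite{CL.expo}*{Theorems 6.9 and 7.3}.'' You have merely unpacked what that one-line citation means---folding $(\alpha,a,n)$ into a single parameter space and invoking the specialization theorems with parameters---which is exactly the intended argument.
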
 

\begin{proof} 
This follows from the specialization theorems \cite{CL.expo}*{Theorems 6.9 and 7.3}.
\end{proof}

\section{The parameterization of restricted orbital integrals}

\subsection{Waldspurger's parameterization of semi-simple orbits, 
and $r$-reductions}\label{section: Wdatum}
We only sketch this parameterization, referring to
\cite{waldspurger:nilpotent}*{\S I.7} for
details.
We will use the language of stable conjugacy and stable
orbits throughout; see Definition \ref{defn:toral-conj}.

The orbits of
regular semisimple elements in $\fg(\K)$ are parameterized using
quadruples $(I, (F_i/F_i^{\#}),(a_i), (c_i))$, with:
\begin{itemize}
\item A finite set $I$.
\item For each $i\in I$, a finite extension $F_i^{\#}$ of $\K$, and a
degree-$2$ commutative algebra $F_i$ over $F_i^{\#}$. 
In all cases, $F_i$ is either a quadratic 
extension of $F_i^{\#}$ or is isomorphic to a direct sum of two copies 
of $F_i^{\#}$. The set of indices $i$ such that $F_i$ is a field is denoted 
by $I^{\ast}$. This set is crucial for indexing orbits inside a stable orbit.
\item For each $i\in I$, an element $a_i\in F_i^{\times}$ satisfying the 
condition $\tau_i(a_i)=-a_i$, where $\tau_i$ is the
non-trivial involution
on $F_i$ over $F_i^{\#}$, and such that $a_i$ generates $F_i$ over $\K$. 

\item For each $i\in I$, an element $c_i\in F_i^{\times}$ such that 
$\tau_i(c_i)=-c_i$.
\end{itemize}
There are further conditions each datum must satisfy (see \cite{waldspurger:nilpotent}*{\S I.7, (2), (4)}).
Loosely speaking, the extensions $F_i$ and the elements $a_i$ determine the 
stable conjugacy class of $X$, and the elements $c_i$ are responsible for 
individual $\K$-orbits inside a stable orbit.   

To emphasize the r\^ole of the algebras $F_i$ and the set $I^{\ast}$, 
we change the notation slightly, 
and label the data by
$(I, I^{\ast}, (F_i/F_i^{\#}), (a_i), (c_i))$. Following \cite{waldspurger:nilpotent}, we 
assume that $c_i$ is only given for $i\in I^{\ast}$. 

We recall some of the details of the correspondence
(see the proof of \cite{cunningham-hales:good}*{Theorem 4.6}).
Since we will use these facts in the proof of 
Proposition  \ref{prop: B_g,r}(3) below in an essential way, 
we single them  out as a 
proposition.

\begin{proposition}[\cite{cunningham-hales:good}*{Theorem 4.6, 
Corollary 4.5}]\label{proposition:W&r-equivalence} 
Let $\fg$ be a symplectic or split special orthogonal group and let $\K$ be a fixed local field.
Suppose that $X, X'\in \fg(r,\K)$, and let 
\[
(I, I^{\ast}, F_i, (a_i), (c_i)) \qquad \text{and}\qquad
(I', {I'}^{\ast}, F_i', (a_i'), (c_i'))
\]
be the corresponding data. Then
\begin{enumerate}

\item\label{Wdatum1}
The element  $X'$  
is stably conjugate to some element $Y$ that lies 
in the centralizer of an element $X$, if and only if  
there exists a bijection $\psi:I\to I'$ such that $F_i$ is 
isomorphic to $F_{\psi(i)}'$, and the isomorphism 
intertwines $\tau_i$ and $\tau'_{\psi(i)}$, for all $i \in I$. 

\item\label{Wdatum3}
If $[X]_r=[X']_r$, then the centralizers of $X$ and $X'$ 
are stably conjugate; 
there exists a bijection $\psi:I\to I'$ such that 
$F_i\cong F'_{\psi(i)}$ as above, and,  
if we identify $F'_{\psi(i)}$ with $F_i$, then 
the elements $a_i$, and $a_i'$ 
have the same valuation and angular component for all $i\in I$. 

\item\label{Wdatum2} 
If $X$ and $X'$ are stably conjugate, then, for all $i \in I$,
the isomorphism $F_i \to F_{\psi(i)}'$ above may be
chosen to carry $a_i$ to $a_i'$.

\end{enumerate}

\end{proposition}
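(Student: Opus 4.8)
The plan is to deduce all three statements from one structural fact about Waldspurger's dictionary: apart from the $c_i$, which record the individual $\K$-orbit inside a stable orbit, the datum $(I, I^\ast, (F_i/F_i^\#), (a_i), (c_i))$ attached to a regular semisimple $X \in \fg(\K)$ is a repackaging of the characteristic polynomial $p_X(t) \in \K[t]$. Concretely, the pairs $(F_i, a_i)$, $i \in I$, run over the monic irreducible factors of $p_X$ up to the symmetry $a_i \mapsto -a_i$; one has $F_i = \K[a_i]$, the involution $\tau_i$ is $a_i \mapsto -a_i$, and $F_i^\#$ is its fixed algebra (with the eigenvalue $0$, in the odd special orthogonal case, contributing no $a_i$). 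Hence the multiset $\{(F_i, a_i) : i \in I\}$, each pair taken up to $\K$-algebra isomorphism respecting the marked generator, is equivalent data to $p_X$, and the $F_i^\#$ and $\tau_i$ are then determined. I would begin by recording this, following \cite{waldspurger:nilpotent}*{\S I.7} and the proof of \cite{cunningham-hales:good}*{Theorem 4.6}.

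Granting this, part (3) is quickest: stable conjugacy of $X$ and $X'$ means $\Ad(g)X = X'$ for some $g \in G(\K\unram) \subseteq \GL_{n_G}(\K\unram)$, and $\Ad(g)$ preserves characteristic polynomials, so $p_X = p_{X'}$ in $\K\unram[t]$, hence---both polynomials having $\K$-coefficients---in $\K[t]$; thus the multisets $\{(F_i, a_i)\}$ and $\{(F'_j, a'_j)\}$ coincide up to marked-generator isomorphism, which is precisely a bijection $\psi \colon I \to I'$ together with isomorphisms $F_i \xrightarrow{\sim} F'_{\psi(i)}$ carrying $a_i$ to $a'_{\psi(i)}$ and automatically intertwining $\tau_i$ with $\tau'_{\psi(i)}$. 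Part (1) is the ``torus only'' shadow of the same principle: $X'$ is stably conjugate to an element of the centraliser of $X$ exactly when the maximal $\K$-tori $C_G(X')$ and $C_G(X)$ become conjugate over $\K\unram$, which for $\Sp$ and split $\SO$ is equivalent to their being $\K$-isomorphic as tori carrying an involution, i.e.\ to the existence of $\psi$ with $F_i \cong F'_{\psi(i)}$ intertwining the $\tau$'s. I would obtain this by extracting the isomorphism type of $C_G(X)$ directly from Waldspurger's construction rather than invoking general Galois-cohomological machinery.

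The real content is part (2). By Lemma \ref{lemma: thick}, the hypothesis $[X]_r = [X']_r$ is the statement $\mathcal{O}_r^\st(X) = \mathcal{O}_r^\st(X')$, which by the definition of $r$-equivalence via $(*)$ means the eigenvalue multisets of $X$ and $X'$ can be matched so that paired eigenvalues differ with $\ord_\K$ strictly exceeding $r$. Because $X$ and $X'$ are restricted of depth $r$, Definition \ref{defn:rsrg}\pref{defn:rsrg:good} (goodness: every nonzero root value has $\ord_\K$ equal to $r$) together with Definition \ref{defn:rsrg}\pref{defn:rsrg:slope} (every eigenvalue is $0$ or has $\ord_\K = r$) and $p \ne 2$ force any two distinct eigenvalues of $X$---and of $X'$---to differ with $\ord_\K$ exactly $r$, while the tameness of the centraliser torus (Definition \ref{defn:good}\pref{defn:good:tame}) makes the nonzero eigenvalues generate tamely ramified extensions of $\K$. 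From this I would draw two conclusions. First, the matching from $(*)$ is forced to be a well-defined bijection of eigenvalue sets (a given eigenvalue of $X'$ is $>r$-close to at most one eigenvalue of $X$), and, $\ord_\K$ being $\Gal(\Kalg/\K)$-invariant, this bijection is Galois-equivariant, hence is an isomorphism $\prod_i F_i \xrightarrow{\sim} \prod_j F'_j$ of \'etale $\K$-algebras, decomposing as $\psi$ together with isomorphisms $F_i \xrightarrow{\sim} F'_{\psi(i)}$ that intertwine the $\tau$'s since $\ord_\K(-\lambda - (-\lambda')) = \ord_\K(\lambda - \lambda') > r$. (The Krasner's-lemma remark that $\ord_\K(\lambda - \lambda') > r = \ord_\K(\lambda - \lambda'')$ for every $\K$-conjugate $\lambda'' \ne \lambda$, whence $\K[\lambda] = \K[\lambda']$, gives the isomorphism $F_i \cong F'_{\psi(i)}$ more directly, but is subsumed by the Galois-equivariance argument.) Second, if $\lambda'$ is the partner of a nonzero eigenvalue $\lambda$, then $\lambda'\lambda^{-1}$ has $\ord_\K$ equal to $0$ and angular component $1$, since $\ord_\K(\lambda - \lambda') > r = \ord_\K(\lambda)$; as the $a_i$ are, under the above identifications, nonzero eigenvalues, this says $a_i$ and $a'_{\psi(i)}$ have the same valuation ($= r$) and the same angular component, which is the remaining assertion of (2).

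The main obstacle I anticipate is not any deep input but the care needed in part (2): pinning down that, for a restricted element of depth $r$, the closest $\K$-conjugate of an eigenvalue lies at distance exactly $r$ and no closer---this is exactly where goodness and the tameness of the torus are used---and then threading the three structural cases $\fsp_{2N}$, split $\fso_{2N}$, and $\fso_{2N+1}$ through the bookkeeping relating raw eigenvalues to the marked generators $a_i$ and the involutions $\tau_i$ (including the innocuous special eigenvalue $0$ in the odd orthogonal case). A secondary point needing attention is the precise sense of ``stably conjugate maximal tori'' in part (1) and its identification with abstract $\K$-isomorphism of tori-with-involution, which I would settle by hand from Waldspurger's explicit parameterization.
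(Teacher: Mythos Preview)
The paper does not supply a proof of this proposition; it is stated as a quotation from \cite{cunningham-hales:good}*{Theorem 4.6, Corollary 4.5}, and the arguments you reconstruct are essentially the ones in that reference (as glimpsed in the proof of Proposition~\ref{prop: B_g,r}(1), which unpacks part of the Cunningham--Hales argument). Your eigenvalue/Krasner approach to part~(2) is correct, including the verification that for a restricted element of depth $r$ any two distinct matrix eigenvalues differ with $\ord_\K$ exactly $r$ (which, as you note, uses both goodness and the restricted slope condition, with $p\ne 2$ handling the $\pm 2\lambda_i$ differences that are not literal root values in the orthogonal cases).

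Two small points. First, your invocation of Lemma~\ref{lemma: thick} at the start of part~(2) is unnecessary and mildly circular in spirit: you translate $[X]_r=[X']_r$ into $\mathcal{O}_r^\st(X)=\mathcal{O}_r^\st(X')$ only to revert immediately to the eigenvalue condition~$(*)$, which is the \emph{definition} of $[X]_r=[X']_r$; and the paper's own proof of Lemma~\ref{lemma: thick} points back to the same Cunningham--Hales argument you are reconstructing. Just start from~$(*)$. Second, you do not explicitly address the clause ``the centralizers of $X$ and $X'$ are stably conjugate'' in part~(2); this follows from your matching of the $(F_i,\tau_i)$ via the direction of part~(1) you have already argued, but it is worth saying so.
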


\subsection{The parameterizing subassignment for orbits of restricted elements}\label{sub:param}
Our construction relies heavily on the results of \cite{cunningham-hales:good}. 
In particular, we use the following theorem.
\begin{theorem}[\cite{cunningham-hales:good}*{Theorem 2.6}]
There exists $M>0$ and a variety $S_{\fg, r}$ over $\Z[1/M]$ such that for all
$\K\in {\mathcal F}_{\Z, M}$, there is a natural bijection between 
the $r$-equivalence classes in $\fg(r, \K)$,  
and the points of $S_{\fg, r}(\rf_\K)$. 
\end{theorem}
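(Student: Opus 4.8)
This theorem is \cite{cunningham-hales:good}*{Theorem 2.6}; I sketch the strategy, since its output is used in \S\ref{sub:param}. The plan is to attach to each restricted element a finite packet of residue-field invariants, show the packet classifies $r$-equivalence classes, and realise the set of admissible packets as the $\rf_\K$-points of a scheme over $\Z[1/M]$. First I would combine Lemma \ref{lemma: thick}, which identifies $[X]_r$ with $\mc O^\st_r(X)$, with the Waldspurger parameterization of \S\ref{section: Wdatum}, assigning to the stable orbit of $X$ a datum $(I, I^{\ast}, (F_i/F_i^{\#}), (a_i), (c_i))$. By Proposition \ref{proposition:W&r-equivalence}\pref{Wdatum3}, $r$-equivalence forgets the $c_i$ and remembers only the isomorphism type of each degree-$2$ algebra $F_i/F_i^{\#}$ (compatibly with $\tau_i$) and the $\K$-normalised valuation and angular component $\ac{a_i}$ of each $a_i$, together with (in the even orthogonal case) the angular component of the Pfaffian, as in \S\ref{sub:thick}. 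Since a restricted element has all eigenvalues of valuation $r$ or equal to $0$, the latter with multiplicity at most $1$ (Definition \ref{defn:rsrg}\pref{defn:rsrg:slope}, \pref{defn:rsrg:multiplicity}), the valuations are forced, so the genuine moduli are exactly the algebras $F_i/F_i^{\#}$ and the angular components. One also checks the converse, that this packet determines $[X]_r$, from the explicit form of Waldspurger's correspondence.

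The step that makes this algebraic, and which I expect to be the crux, is the uniform bookkeeping of the algebras $F_i$. Once $p > M$, each $F_i$ (and $F_i^{\#}$) that occurs is a tamely ramified extension of $\K$ of degree at most $n_G$; such extensions fall into finitely many combinatorial types, indexed by ramification and residue degrees independently of $\K$, and each residue field $\rf_{F_i}$ is canonically an $\rf_\K$-algebra of the prescribed degree. So I would define $S_{\fg, r}$ as a finite disjoint union, over the combinatorial types of Waldspurger data admissible for $\fg$, of locally closed subschemes of products $\prod_i \Res_{\rf_{F_i}/\rf_\K}\Gmult$ (one factor per $i$, plus one extra $\Gmult$-factor recording the Pfaffian in the even orthogonal case), cut out by the residue-field reductions of the conditions $\tau_i(a_i) = -a_i$ and of the further constraints of \cite{waldspurger:nilpotent}*{\S I.7, (2), (4)}. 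This descends to $\Z[1/M]$ precisely because, for $p > M$, the trace forms involved are nondegenerate and the classification of tame extensions is $\K$-independent.

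It then remains to check that $X \mapsto [X]_r \mapsto (\text{type},\ (\ac{a_i})_i,\ \dots)$ is a well-defined bijection onto $S_{\fg, r}(\rf_\K)$. Well-definedness and injectivity are Lemma \ref{lemma: thick} together with Proposition \ref{proposition:W&r-equivalence}\pref{Wdatum3}; surjectivity is a realizability statement, namely: lift the combinatorial data to genuine tame extensions $F_i/\K$, lift each angular component to an $a_i \in F_i^{\times}$ of valuation $r$ with $\tau_i(a_i) = -a_i$, run the resulting datum through Waldspurger's correspondence, and confirm the output lies in $\fg(r,\K)$. I expect the main obstacle to be exactly this last point: determining which combinatorial types actually occur, \ie\ showing that the discriminant- and Hasse-type side conditions of \cite{waldspurger:nilpotent}*{\S I.7} can always be met over the residue field once $p$ is sufficiently large, which is where the explicit lower bound on $p$ in \cite{cunningham-hales:good} enters.
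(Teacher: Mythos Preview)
The paper does not prove this theorem at all: it is quoted verbatim from \cite{cunningham-hales:good}*{Theorem 2.6} and used as a black box, with only the remark that $S_{\fg,r}$ will be regarded as an object of $\rde_\Q$. So there is no proof in the paper to compare your attempt against.

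That said, your sketch is a reasonable outline of the Cunningham--Hales argument and is consistent with how the present paper \emph{uses} the result (via Lemma~\ref{lemma: thick} and Proposition~\ref{proposition:W&r-equivalence}). Since the statement is cited rather than proved here, the appropriate response in this paper's context is simply to invoke the reference, as you do in your first sentence; the remainder of your sketch is extra detail that goes beyond what the paper itself supplies.
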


In the language used in this paper, we will think of $S_{\fg, r}$ as a 
subassignment that is definable in the language of rings, and thus is an element
of $\rde_{\Q}$. 

Proposition \ref{proposition:W&r-equivalence} 
implies that if $X \in \fg(r,\K)$,
the cardinality of the  set $I^{\ast}(X)$ depends 
only on the image of $X$ in $S_{\fg, r}(\rf_\K)$. Therefore, for $x\in S_{\fg, r}(\rf_\K)$, we can talk about the cardinality $|I^{\ast}(x)|$.

\begin{lemma}\label{lem:cardinality} 
Let $k$ be a positive integer. 
The set of points $x$ of $S_{\fg, r}$ such that
$|I^{\ast}(x)|=k$ is definable. We denote the corresponding definable subassignment by $A_{k,\fg, r}$.
\end{lemma}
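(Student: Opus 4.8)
The plan is to establish definability of the locus $\{x \in S_{\fg, r} : |I^*(x)| = k\}$ by exhibiting it as the specialization of a definable subassignment constructed directly from the Waldspurger parameterization. First I would recall, from the proof of \cite{cunningham-hales:good}*{Theorem 2.6} and the discussion in \S\ref{section: Wdatum}, how a point of $S_{\fg, r}(\rf_\K)$ records the combinatorial type of the Waldspurger datum: the set $I$, the subset $I^*$, the isomorphism types of the algebras $F_i/F_i^\#$, and the valuations and angular components of the $a_i$. By Proposition \ref{proposition:W&r-equivalence}(2), if $[X]_r = [X']_r$ then there is a bijection $\psi : I \to I'$ with $F_i \cong F'_{\psi(i)}$ intertwining the involutions; in particular $\psi$ restricts to a bijection $I^*(X) \to {I'}^*(X')$, so $|I^*(X)|$ is a genuine invariant of the $r$-equivalence class, hence a function of $x \in S_{\fg, r}(\rf_\K)$, as already observed in the paragraph preceding the lemma.

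The key step is then to see that this invariant is \emph{definable} in the language of rings as a function on $S_{\fg, r}$. I would argue this by tracking the explicit construction of $S_{\fg, r}$ in \cite{cunningham-hales:good}: it is built (up to inverting finitely many primes) as a finite disjoint union of pieces, each piece corresponding to a fixed combinatorial type of Waldspurger datum --- fixed $|I|$, fixed assignment of which $F_i$ are fields (i.e.\ fixed $I^*$ as an abstract subset up to the labelling), fixed ramification/residue data of the $F_i$, and the residue-field points then parameterizing the allowed angular components of the $a_i$ and the relevant $c_i$. On each such piece, $|I^*|$ is literally a constant. Therefore $\{x : |I^*(x)| = k\}$ is the union of those finitely many pieces on which the constant equals $k$; a finite union of definable subassignments is definable, and each piece is cut out inside $S_{\fg, r}$ by a ring-language formula (indeed by the vanishing/non-vanishing conditions that separate the pieces in the construction of $S_{\fg, r}$). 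This gives the definable subassignment $A_{k, \fg, r}$, with the requisite uniformity in $\K$ coming from the fact that the same constant $M$ and the same finite decomposition work for all $\K \in {\mathcal F}_{\Z, M}$.

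If one prefers not to open up the internal structure of $S_{\fg, r}$, an alternative is to work upstream: the space of Waldspurger data of bounded complexity is itself definable (the finite set $I$ and the subset $I^*$ are encoded by bounded integer parameters in the Presburger sort, the extensions $F_i$ by finitely many discrete invariants, and the angular components of $a_i$ --- together with the $c_i$ for $i \in I^*$ --- by residue-field variables), the condition ``restricted of depth $r$'' is definable by Definition \ref{defn:rsrg} (the conditions there are on valuations and eigenvalues, hence first-order), and the map to $r$-equivalence classes realizing $S_{\fg, r}$ is a definable surjection; pushing forward the definable condition $|I^*| = k$ along this surjection (using that the image of a definable set under a definable map is definable) again yields $A_{k, \fg, r}$. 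I expect the main obstacle to be purely expository rather than mathematical: namely writing down, at the level of detail appropriate for this paper, precisely which ring-language formula isolates each combinatorial type inside $S_{\fg, r}$ --- i.e.\ making the phrase ``the part of $S_{\fg, r}$ on which the Waldspurger type has $|I^*| = k$'' rigorous without reproducing the entire construction of \cite{cunningham-hales:good}. I would handle this by citing the relevant portions of \loccit\ for the decomposition into types and observing that definability is preserved under the finite operations involved, rather than by an explicit formula.
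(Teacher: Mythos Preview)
Your approach is correct in principle but takes a different route from the paper's. You argue by opening up the construction of $S_{\fg,r}$ from \cite{cunningham-hales:good} and observing that it already comes stratified by combinatorial type, so that $|I^*|$ is locally constant along a finite definable decomposition. The paper instead bypasses the internal structure of $S_{\fg,r}$ entirely: it observes that $|I^*(X)|$ equals the number of irreducible factors of the characteristic polynomial $P_X$ that are \emph{even} polynomials, and then writes down an explicit ring-language formula in the coefficients of $P_X$ (a disjunction over the possible numbers $l$ of non-even factors, existentially quantifying over the coefficients of the factors $f_1,\dotsc,f_k,g_1,\dotsc,g_l$ and asserting $P_X=f_1\cdots f_k g_1\cdots g_l$ with the $f_j$ even nonconstant and the $g_j$ not even).

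What each buys: the paper's argument is entirely self-contained and produces a concrete formula, at the cost of a somewhat cumbersome case split over degrees and numbers of factors. Your argument is conceptually cleaner and explains \emph{why} one should expect definability (it is baked into the way $S_{\fg,r}$ was assembled), but --- as you yourself flag --- it rests on a structural claim about the Cunningham--Hales construction that you would have to pin down by precise citation, and your second alternative (pushing forward the condition $|I^*|=k$ along the definable surjection to $S_{\fg,r}$) tacitly uses that images of definable sets under definable maps are definable, which in the residue-field sort is fine but deserves a word. Either route suffices; the paper's is the more elementary and portable one.
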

\begin{proof} 
Note that the cardinality of $I^{\ast}(X)$ equals 
the number of 
irreducible factors in the characteristic polynomial of $X$ that are even 
polynomials (cf. \cite{cunningham-hales:good}*{\S3.3} and \cite{gordon-hales:transfer}*{\S3.2}).
It is easy to see that there is a (rather cumbersome) formula in the 
language of rings defining the set of elements $X$ such that exactly 
$k$ irreducible factors of the characteristic polynomial of $X$ are even.

We can use the coefficients of the characteristic polynomial of $X$ as free 
variables in this formula (since they are expressions in the matrix entries 
of $X$). Let $P_X$ be the characteristic polynomial of $X$. In the spirit of 
\cite{gordon-hales:transfer}, we think of $P_X$ as a collection of terms that 
are expressions in the entries of $X$.
Then our condition is given by the disjunction
of formulas $\varphi_l$, as $l$ ranges from $0$ to $n_G$:
$$
\begin{aligned}
\varphi_l(P_X)&=\text{`}
\exists f_1,\dots, f_k, g_1, \dots, g_l: P_X=f_1\dots f_kg_1\dots g_l\\
&\wedge(f_1,\dots, f_k \text { are even and nonconstant})\\
& \wedge (g_1,\dots, g_l \text{ are 
not even})\text{'}, 
\quad \text{ for } l>0;\\
\varphi_0(P_X)&=\text{`}
\exists f_1,\dots, f_k: P_X=f_1\dots f_k 
\wedge(f_1,\dots, f_k \text { are even and nonconstant})\text{'}
\end{aligned}
$$

Clearly, the condition that a polynomial is even can be expressed by a 
ring formula in its coefficients. 
Note that within the language of rings, one handles
degree-$m$
polynomials as 
$(m + 1)$-tuples of coefficients, 
and therefore each formula quantified over polynomials $f$, in fact should be interpreted as a disjunction of formulas, one for each possible degree of $f$
(up to degree
$n_G$, in the notation of \S\ref{sec:hyps}), 
each with the corresponding number of variables.
\end{proof}

Now we are ready to construct the subassignment that will parameterize 
the thickened orbits of restricted, depth-$r$ elements.

\begin{definition}\label{def: B}
For each  positive integer $k$, let $T_k$ be the subassignment 
of $h[0,0,1]$ defined by the formula 
$\phi(l)=\text{`$1\le l\le k$'}$ 
(so that any specialization of $T_k$  is 
just a set of $k$ elements), and let $D_k$ be the set of all functions from 
$T_k$ to the set of two elements $\{\pm 1\}$.
Let $B_{\fg, r}$ be the subassignment
\begin{equation}\label{eq: def of B}
B_{\fg, r}:=\bigsqcup_{k=1}^{n/2} 
A_{k, \fg, r}\times D_k,
\end{equation}
where $A_{k, \fg, r}$ is as in Lemma \ref{lem:cardinality}
and $n = n_G$, in the notation of \S\ref{sec:hyps}.
Note that
some terms in the disjoint union in \eqref{eq: def of B} may be empty,
and that
$B_{\fg, r}$ is equipped with a natural projection $\pi_r$ 
to $S_{\fg, r}$.
\end{definition}

\begin{remark}\label{remark: pointing to the appendix}
Alternatively, one can make the space parameterizing the orbital integrals 
from the spaces $S_{\fg, \fh, r}$ constructed in \cite{cunningham-hales:good}, where
$\fh$ ranges over endoscopic Lie algebras for $\fg$. 
\end{remark}



\begin{proposition}\label{prop: B_g,r} 
There exists a morphism of definable subassignments $\nu_r:\fg(r)\to B_{\fg, r}$, and 
exists $M>0$ 
such that for every field $\K\in {\mathcal F}_{\Z, M}$, 
$\nu_r$ specializes to
a surjective
map $\nu_{r,\K}:\fg(r, \K) \to B_{\fg,r}(\rf_\K)$ such that: 
\begin{enumerate}
\item If $[X]_r=[X']_r$, then 
$\pi_{r}\circ \nu_{r,\K}(X)=\pi_r\circ \nu_{r,\K}(X')$.
In fact, $\nu_{r,\K}(X)=\nu_{r,\K}(X')$ if and only 
if $X'\in {\mathcal O}_r(X)$. That is, 
for every $y\in B_{\fg, r}(\rf_\K)$, the set $\nu_{r,\K}^{-1}(y)$ is a 
thickened orbit (see Definition \ref{definition: thick}).

\item
There exists a definable subassignment ${\mathcal O}^r$ in the category
$\de_{B_{\fg, r}}$ that for all fields $\K\in {\mathcal F}_{\Z, M}$
and all $y \in B_{\fg, r}(\rf_\K)$
specializes to the thickened orbits, 
in the sense that
\begin{equation*}\label{eq:definable thick}
{\mathcal O}^r_{y,\K}=\nu_{r, \K}^{-1}(y)=\mathcal O_r(X_y)
\end{equation*}
whenever $X_y\in \fg(r, \K)$ satisfies
$\nu_{r,\K}(X_y)=y$. 

\item The
restriction to $\Hecke_r(\fg(\K))$ of the
orbital integral of $X \in \fg(r,\K)$
is completely determined by $\nu_{r,\K}(X)$.
That is,
\[
\nu_{r,\K}(X) = \nu_{r,\K}(X') \iff \forall f\in \Hecke_r(\fg(\K)),\ins\ \mu_X(f) = \mu_{X'}(f).
\]
We denote the corresponding
(restricted)
orbital integral by
$\mu_y$, where $y = \nu_{r,\K}(X)$
(so this notation is well defined only for fields $\K$ of sufficiently large residue characteristic).

\item\label{item:Q} 
There exists a positive constructible motivic function $Q_{\fg, r}(y)$ 
on $B_{\fg, r}$, and for every constructible motivic  test function $f$  
such that the Fourier transform of $f_\K$ is supported by $\fg(\K)_{-r}$,
there exists a constructible motivic 
function $\Phi^f$
on the definable subassignment $B_{\fg, r}$, such that for every field 
$\K\in {\mathcal F}_{M+M_f}$ (where $M_f$ might depend on $f$) we have 
\[
\mu_y(f_\K)
= \frac{\Phi^f_\K(y)}{Q_{\fg, r, \K}(y)},
\]
for every $y\in B_{\fg, r}(\rf_\K)$, where $\mu_y(f_\K)$ stands for the value of the orbital integral of $f_\K$ along the orbit of $X_y$ for any $X_y$ such that
$\nu_{r,\K}(X_y)=y$.

\end{enumerate}
\end{proposition}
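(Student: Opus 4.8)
The plan is to build $\nu_r$ from the two layers of the Waldspurger parameterization of \S\ref{section: Wdatum}: the ``stable'' layer, already packaged by the scheme $S_{\fg,r}$ of \cite{cunningham-hales:good}*{Theorem 2.6}, and an ``unstable'' refinement recorded at the places $i\in I^*$; then to read off (1)--(3) from a careful reading of the proof of \cite{cunningham-hales:good}*{Theorem 4.6}, reserving the Cluckers--Loeser formalism for (4). Concretely, given $X\in\fg(r,\K)$ with Waldspurger datum $(I,I^*,(F_i/F_i^\#),(a_i),(c_i))$, its $r$-equivalence class $[X]_r$ is a point of $S_{\fg,r}(\rf_\K)$ lying in $A_{k,\fg,r}$, where $k=|I^*(X)|$ is the number of even irreducible factors of the characteristic polynomial of $X$ (Lemma~\ref{lem:cardinality}). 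Using the ordering of those even factors implicit in the proof of Lemma~\ref{lem:cardinality}, I would identify $I^*(X)$ with the abstract $k$-element set $T_k$; for each $i\in I^*(X)$, the invariant which, in Waldspurger's parameterization, records the $\K$-orbit of $X$ inside its stable orbit at the place $i$ takes values in a two-element set (of order $2$ here since $p$ is odd and $F_i/F_i^\#$ is a quadratic field extension), and, fixing a definable labelling of these by $\{\pm1\}$, I get a sign $\epsilon_X(i)$. Put $\nu_r(X)=([X]_r,\epsilon_X)\in A_{k,\fg,r}\times D_k\subseteq B_{\fg,r}$. Every ingredient --- the characteristic polynomial, the factorization into even irreducible factors and its ordering, the orbit invariant of each $c_i$ --- is cut out by a ring formula in the matrix entries of $X$ (with the bounded quantification over low-degree polynomials used in Lemma~\ref{lem:cardinality}), so $\nu_r$ is a morphism of definable subassignments, surjective on $\rf_\K$-points once the residue characteristic exceeds a bound $M$ depending only on $G$.

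The heart of the argument is (1), and I expect it to be the main obstacle. That $\pi_r\circ\nu_{r,\K}(X)=[X]_r$ is immediate, giving both the first assertion and surjectivity; the substantive claim is that $\nu_{r,\K}^{-1}(y)$ is exactly a thickened orbit. For this I would revisit the proof of \cite{cunningham-hales:good}*{Theorem 4.6}: it shows, in the notation of Proposition~\ref{proposition:W&r-equivalence}, that the stable thickened orbit $\mathcal O_r^\st(X)=[X]_r$ (which is Lemma~\ref{lemma: thick}) is determined by, and determines, the algebras $F_i/F_i^\#$ together with the valuations and angular components of the $a_i$. What must be added is: with this stable data fixed, the $G(\K)$-orbits contained in $\mathcal O_r^\st(X)$ are parameterized precisely by the vector $(\epsilon_X(i))_{i\in I^*}$ of orbit invariants of the $c_i$, and two restricted elements of depth $r$ with the same such vector in fact lie in the same \emph{thickened} orbit $\mathcal O_r$, not merely in the same stable thickened orbit. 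The first point is the standard parameterization of rational forms in a stable class; the second follows because perturbing $X$ by an element of $\ft(\K)_{r^+}$ moves each $a_i$ only within its angular-component class and leaves the $c_i$ untouched, while Proposition~\ref{proposition:W&r-equivalence}(3) lets me first make the $a_i$ agree exactly by a stable conjugation, after which agreement of the orbit invariants of the $c_i$ forces honest $G(\K)$-conjugacy up to $\ft(\K)_{r^+}$. Carried out in full, this yields at once the refinement needed here and a self-contained proof of Lemma~\ref{lemma: thick}; the delicacy is entirely in reproducing Waldspurger's bookkeeping carefully enough to separate $\mathcal O_r$ from $\mathcal O_r^\st$.

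Given (1), parts (2) and (3) are essentially formal. For (2), take $\mathcal O^r:=\{(y,Z)\in B_{\fg,r}\times\fg(r):\nu_r(Z)=y\}$, which is definable because $\nu_r$ is a definable morphism and $\fg(r)$ is a definable subassignment (conditions on the characteristic polynomial together with regularity); its $\K$-fibre over $y$ is $\nu_{r,\K}^{-1}(y)=\mathcal O_r(X_y)$ by (1). For (3), by (1) we have $\nu_{r,\K}(X)=\nu_{r,\K}(X')$ iff $X'\in\mathcal O_r(X)$, which, since the thickened orbits partition $\fg(r,\K)$, is equivalent to $\mathcal O_r(X)=\mathcal O_r(X')$; Proposition~\ref{prop: separates} identifies this with the equality of $\mu_X$ and $\mu_{X'}$ on $\Hecke_r(\fg(\K))$, so the restricted orbital integral is well defined on $B_{\fg,r}(\rf_\K)$ and may be written $\mu_y$.

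For (4), fix a constructible motivic test function $f$ whose specialization lies in $\Hecke_r(\fg(\K))$ (equivalently, has Fourier transform supported in $\fg^*(\K)_{-r}$; see Lemma~\ref{lemma: Hecke_r}). By \cite{cunningham-hales:good}*{Theorem 1.26}, argued as in \cite{cunningham-hales:good}*{Corollary 1.30}, the orbital integral $\mu_X(f_\K)$ is given, for $X\in\fg(r,\K)$, by an explicit finite sum of contributions indexed by $X$ and by the Levi \K-subalgebras \mf l of \fg containing $C_\fg(X)$, each contribution being a power of $|D^{\fg,\mf l}(X)|^{1/2}$ times a count of $\rf_\K$-points of a reduction that depends only on the image of $X$ in $S_{\fg,r}(\rf_\K)$ and its orbit refinement --- that is, only on $\nu_{r,\K}(X)$. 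This exhibits $X\mapsto\mu_X(f_\K)$ as the specialization of a constructible motivic function on $\fg(r)$ that factors through $\nu_r$, and, invoking the specialization theorems \cite{CL.expo}*{Theorems 6.9 and 7.3} to control the bad primes (in the spirit of \S\ref{sec:badprimes}), one obtains a constructible motivic function $\Phi^f$ on $B_{\fg,r}$ and a positive constructible motivic function $Q_{\fg,r}$ on $B_{\fg,r}$ --- the latter built from the Weyl-discriminant normalization in that formula, whose valuation is even (hence really a power of the residue cardinality) and depends only on $[X]_r$ by Proposition~\ref{proposition:W&r-equivalence}(2), so is independent of $f$ --- with $\mu_y(f_\K)=\Phi^f_\K(y)/Q_{\fg,r,\K}(y)$ for all $\K$ of residue characteristic exceeding $M+M_f$. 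The one genuinely delicate point beyond routine verification is making the descent along $\nu_r$ to the purely residual subassignment $B_{\fg,r}$ uniform in the residue characteristic, which is why the explicitness of the cited formula, rather than a soft argument, is what carries the day.
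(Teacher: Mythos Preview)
Your outline for Parts (1)--(3) is essentially the paper's strategy, but you underspecify the one normalization that makes $\nu_r$ well defined and definable. The orbit invariant at a place $i\in I^*$ is the class of $c_i$ in a two-element \emph{torsor}, not a two-element group; ``fixing a definable labelling by $\{\pm1\}$'' is precisely the problem, and the proof of Lemma~\ref{lem:cardinality} does not supply an ordering of the even factors either. The paper resolves both by fixing an explicit Kostant section (attached to a regular nilpotent $N$ written out in matrix coordinates for each type), letting $X_0$ be the unique element of the stable orbit of $X$ on that section with Waldspurger datum $(\tilde a_i,\tilde c_i)$, and setting $\epsilon_i=\sgn_{F_i/F_i^\#}(c_i\tilde c_i^{-1})$. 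Continuity of the Kostant section then enters the proof of (1): if $[X]_r=[Y]_r$, one argues that the section representatives $X_0,Y_0$ automatically satisfy $\sgn_{F_i/F_i^\#}(\tilde c_i(X_0)\tilde c_i(Y_0)^{-1})=1$, which is what allows the comparison of $\epsilon_X$ with $\epsilon_Y$ across different stable orbits inside one $r$-class. For (2), the paper does not simply take the graph of $\nu_r$ but invokes \cite{gordon-hales:transfer}*{Lemma 35} to express $\sgn_{F_i/F_i^\#}(c_i\tilde c_i^{-1})=\epsilon_i$ as a Denef--Pas formula in the entries of $X$ and $X_0$; your ``ring formula in the matrix entries'' claim needs exactly this. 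Part (3) is, as you say, immediate from (1) and Proposition~\ref{prop: separates}.

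For Part (4) the approaches genuinely diverge. You propose to push the explicit formula of \cite{cunningham-hales:good}*{Theorem 1.26} through the motivic machine term by term and then descend along $\nu_r$. The paper instead uses a Weyl-integration trick that avoids parsing that formula: with $f_y$ the characteristic function of the thickened orbit $\mathcal O_r(y)$ (definable by (2)), apply Weyl integration to $\int_{\fg(\K)} f\cdot f_y$; only the $T=C_G(X_y)$ term survives, and since $\mu_{X}$ is constant for $X\in X_y+\ft(\K)_{r^+}$ one gets
\[
\int_{\mathcal O_r(y)} f \;=\; |W(G(\K),T(\K))|^{-1}\,|D^{\fg}(X_y)|\,\vol^{*}(X_y+\ft(\K)_{r^+})\cdot \mu_{X_y}(f).
\]
Then $\Phi^f$ is the left side (a motivic integral of a constructible function over a definable family, hence constructible in $y$ by \cite{CL}*{Theorem 10.1.1}) and $Q_{\fg,r}$ is the product of the three non-$\mu$ factors, each shown constructible via \cite{cunningham-hales:good}*{Lemma 6.1, (6.1.3)} and the same theorem of \cite{CL}. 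This is cleaner: it produces $\Phi^f$ as a single motivic integral and makes the $f$-independence and positivity of $Q_{\fg,r}$ transparent, whereas your route must separately verify that each summand over Levis is constructible and that the whole expression factors through the residue-field subassignment $B_{\fg,r}$.
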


\begin{proof}
Let $\K$ be a local field, and let us take an element  $X\in \fg(r, \K)$.
Let  $(I, I^{\ast}, (a_i), (c_i))$ be the datum attached to $X$.
Recall that $[X]_r = \mathcal{O}_r^\st(X)$ (see Lemma~\ref{lemma: thick}) is a union of stable orbits.
Let 
$x\in S_{\fg, r}(\rf_\K)$ be the point that corresponds to the class $[X]_r$, 
according to \cite{cunningham-hales:good}*{Theorem 4.4}.

We need to choose a Kostant section (a map
from the set of stable adjoint orbits in $\fg\reg(\K)$
to the set of (rational) adjoint orbits in $\fg\reg(\K)$).
This map is associated with a choice of a nilpotent element $N\in \fg(\K)$ and 
constructed by means of ${\mathfrak {sl}}_2$-triples. We need an explicit 
construction in Denef--Pas language,
so we write $\widetilde N_n$ for the $n \times n$ matrix with $1$'s on
the superdiagonal and $0$'s elsewhere, and then put
$N = \widetilde N_{2m}$ (if $\fg = \mf{sp}_{2m}$),
$N = \left[\begin{smallmatrix}
\widetilde N_m &   &                 \\
               & 0 &                 \\
               &   & -\widetilde N_m
\end{smallmatrix}\right]$ (if $\fg = \mf{so}_{2m + 1}$),
or
$N = \left[\begin{smallmatrix}
\widetilde N_m &                 \\
               & -\widetilde N_m
\end{smallmatrix}\right]$ (if $\fg = \mf{so}_{2m}$).
Note that, since we just need some constructible 
normalization in what follows, we do not need to deal with the question
\ins{of}
whether our construction depends on the choice of $N$: as long as we always 
make the same choice of $N$, we will get the same bijection between the points
 of $B_{\fg, r}(\rf_\K)$ and the equivalence classes of elements. 
 As explained in \cite{kottwitz:transfer-factors-lie}*{Section 2.4}, the value of the 
Kostant section 
map at $X$ is obtained by intersecting the stable orbit of $X$ with a certain subvariety
of $\fg$. We will refer to this subvariety as the Kostant section, as well. 

Let $X_0$ be the element of the intersection of our Kostant
section with the stable orbit that corresponds to the 
point $x$ (\ie, the stable orbit of $X$), and let 
$(\tilde I, \smash{\tilde I}^{\ast}, (\tilde a_i), (\tilde c_i))$ be the Waldspurger's datum 
attached to $X_0$.
Note that $\tilde I = I$ and $\smash{\tilde I}^\ast = I^\ast$---%
that is, that the $I$ and $I^*$ pieces of the data for $X$ and $X_0$
are the same---%
since $X$ and $X_0$ are stably conjugate.
We also may, and hence do, identify, for a fixed $i \in I$,
the fields $F_i^\#$ corresponding to $X$ and $X_0$,
and the algebras $F_i$ corresponding to $X$ and $X_0$,
see Proposition \ref{proposition:W&r-equivalence}(\ref{Wdatum3}).
For $i\in I^{\ast}$, let 
$\epsilon_i:=\sgn_{F_i/F_i^{\#}}(c_i{\tilde c}_i^{-1})$, where 
$\sgn_{F_i/F_i^\#}$ is the sign character of $F_i^\#$ 
(this definition makes sense, since the element $c_i\tilde c_i^{-1}$ is 
stable under the involution 
$\tau_i$ of $F_i$, and therefore lies in $F_i^{\#}$).  

Finally, let us define $\nu_{r,\K}(X)$ as
$$
\nu_{r,\K}(X)=\left(x,(\epsilon_i)_{i\in I^{\ast}(X)}\right)\in B_{\fg, r}(\rf_\K).
$$

{\sl Part (1).}
We need to show that if $\nu_{r,\K}(X)=y=(x, (\epsilon_i))\in B_{\fg, r}(\rf_\K)$, then
\begin{equation}\label{eq: thickened}
\nu_{r,\K}^{-1}(y)={\mathcal O}_r(X).
\end{equation}
As above, we denote by $(I, I^{\ast}, F_i, (a_i), (c_i))$ 
the Waldspurger's datum corresponding to $X$.
Suppose that $Y \in \nu_{r,\K}^{-1}(y)$ has Waldspurger's datum
$(I', I^{\ast\,\prime}, F_i', (a_i'),  (c_i'))$.
By the definition of $\nu_{r,\K}$, we have that $[Y]_r = [X]_r$,
so, by Proposition \ref{proposition:W&r-equivalence},
we may, and do, assume that $I = I'$, $I^\ast = I^{\ast\,\prime}$,
and, for all $i \in I$,
$F_i = F_i'$ and
the elements $a_i$ and $a_i'$ have the same valuation and 
angular component. 
We claim that $\sgn_{F_i/F_i^\#}(c_i {c_i'}^{-1})=1$.

Let us prove this claim.
Let $(I, I^{\ast}, (\tilde a'_i), (\tilde c'_i))$ be the Waldspurger's 
datum that corresponds to the element $Y_0$ that is stably conjugate to $Y$ 
and lies in the Kostant section. It follows from the construction of the Kostant section (see, \eg \cite{kottwitz:transfer-factors-lie}*{Section 2.4}) 
combined with the proof of \cite{cunningham-hales:good}*{Theorem 4.6} that we can assume that 
$\sgn_{F_i/F_i^\#}(\tilde c_i^{-1}\tilde c'_i)=1$.  
Indeed, by the proof of \cite{cunningham-hales:good}*{Theorem 4.6}, 
for each element $X''$ in the stable orbit of $X$ there exists a unique element $Y''$ in the stable orbit of $Y$, such that $X''-Y''$ lies in $\fg(\K)_{r^+}$, and this is the element
that satisfies $\sgn_{F_i/F_i^\#}(c_i(X'')c_i(Y'')^{-1})=1$ (where
$c_i(X'')$, resp., $c_i(Y'')$ stands for the $c$-part of the Waldspurger data for these 
elements). In particular, there is exactly one such element for $X_0$. Since the Kostant section
is continuous by construction, this element must be the element $Y_0$ that 
also lies in the Kostant section. 
Finally, by    
definition of 
$\nu_{r,\K}$, we have
\[
\sgn_{F_i/F_i^\#}(c_i\smash{\tilde c}_i\inv)
= \sgn_{F_i/F_i^\#}(c_i'\smash{\smash{\tilde c}'}_i\inv),
\]
so that
\[
\sgn_{F_i/F_i^\#}(c_i c_i'{}\inv)
= \sgn_{F_i/F_i^\#}(\tilde c_i\smash{\smash{\tilde c}'}_i\inv)
= 1.
\]
	

To summarize, we have shown that if $\nu_{r, \K}(Y)=\nu_{r, \K}(X)$, then
the elements $X$ and $Y$ have the data $(I, I^{\ast}, F_i, (a_i), (c_i))$  and 
$(I, I^{\ast}, F_i, (a_i'), (c_i'))$, respectively; and, for all $i \in I$,
the elements $a_i$ and $a_i'$ have the same valuation and 
angular component; and  $\sgn_{F_i/F_i^\#}(c_i {c_i'}^{-1})=1$.
It is shown in the proof of \cite{cunningham-hales:good}*{Theorem 4.6} that
these conditions are equivalent to the existence of topologically unipotent elements $u_i\in F_i$ such that $\lambda_i(X)=u_i\lambda_i(Y)$, where 
$\lambda_i$ are the appropriately labelled eigenvalues. This, in turn,
combined with the conditions   $\sgn\ins{_{F_i/F_i^\#}}(c_i {c_i'}^{-1})=1$, is equivalent 
to the statement that there exists a rational conjugate $Y'$ of $Y$, such that
$X-Y'$ lies in $\fg(\K)_{r^+}$ 
(note that we have shown in Remark \ref{rem:good-depth}
that the constant that is used to relate depth and slope in 
\cite{cunningham-hales:good} is, in fact, $1$ in all our cases). 

Finally, note that for $x\in S_{\fg, r}(\rf_\K)$, 
upon taking the union over all $y\in B_{\fg, r}(\rf_\K)$  satisfying 
$\pi_r(y)=x$ on both sides of \eqref{eq: thickened}, 
we get the assertion of Lemma \ref{lemma: thick}. 

\bigskip



{\sl Part (2).}
By Part (1), given a field $\K$ with sufficiently large residue characteristic,
we can unambiguously use the notation 
${\mathcal O}_r(y):=\nu_{r,\K}^{-1}(y)$, for $y\in B_{\fg, r}(\rf_\K)$. 
To prove that $\mathcal O_r(y)$ is definable (\ie, that it is obtained  
from a definable subassignment by specialization), we use the techniques from 
\cite{gordon-hales:transfer}.  
Recall the notation, and a few facts proved there that are relevant 
for the present situation.
\begin{enumerate}
\item Each $F_i^{\#}$ and $F_i$ is a $\K$-vector space.
As in \cite{waldspurger:nilpotent}, 
the elements $(c_i)$ determine a
pairing $q_W$ on the vector space
$W \ceq \bigoplus_{i \in I} F_i$ such that $(V, q_V)$ and
$(W, q_W)$ are isomorphic as quadratic spaces
(where $V$ is the quadratic space naturally associated
to our group $G$).
Let $\phi:W\to V$ be such an isomorphism, and let 
$\phi_{\ast}:\lop(W)\to \lop(V)$ be the isomorphism induced by $\phi$. 

Let $L_i:F_i\to\lop(F_i)$ be the linear map that takes $w\in F_i$ to
the linear operator that acts by left multiplication by $w$ on $F_i$.
Let $L:W\to \lop(W)$ be the direct sum of the maps $L_i$. 
Thus, if the space $(W, q_W)$ was constructed from Waldspurger's datum for 
a regular semisimple element $X$, then for $w\in W$, its image 
$\phi_{\ast}\circ L (w)$ can be thought of as an element of
$C%
(X)$%
\fto{ -- }{---}%
the centralizer of $X$.

\item Let $X$ be an $n\times n$-matrix with distinct eigenvalues 
$\{\lambda_j\}$, and with the 
characteristic polynomial $P_X(\lambda)=\prod_j(\lambda-\lambda_j)$.
Let $P^{(j)}(\lambda)$ be the polynomial $P_X$ divided by the factor  
$(\lambda-\lambda_j)$.
Then the projection operator onto  the $\lambda_j$-eigenspace can be 
written as $P^{(j)}(X)/P^{(j)}(\lambda_j)$.  More generally, 
if we write $P_X=f\tilde f$ as a product of two monic polynomials 
$f$ and $\tilde f$, then    
there exists a polynomial 
$\Pi = \Pi(\lambda, f, \tilde f)$ (see \cite{gordon-hales:transfer}*{\S1.7})
that, when evaluated  at $X$, defines the projection operator onto 
the direct sum of the eigenspaces of $X$ that correspond to the roots of 
$f$. The key fact here is that the coefficients of $\Pi$ are
polynomial expressions in the matrix entries $x_{ij}$ of $X$, and in the 
coefficients of $f$ and $\tilde f$.

\item \cite{gordon-hales:transfer}*{Lemma 35}
Let $z_i\in F_i^{\#}$ be an arbitrary element. Then the following conditions
are equivalent:
\begin{enumerate}
\item $\sgn_{F_i/F_i^{\#}}(z_i)=1$.
\item Let $w$ be an arbitrary element of $W$ such that $w_i=z_i$. Let $P$ be the projector from $C(X)$ onto $\phi_{\ast}\circ L(F_i)$ as above. (Essentially, it is a projector to the $\lambda_i$-eigenspace of $X$.)
Then there exists 
$X_1\in C(X)$ such that $PX_1\tau(X_1)=P(\phi_{\ast}\circ L)(w)$.
Here $\tau$ is the involution on $C(X)$ 
defined by $\tau(g)=(J^{-1}){}^{t}gJ$, where $J$ is the matrix from the 
definition of the group $G$ (see
\eqref{eq:J-symp} and \eqref{eq:J-orth}
on page \pageref{eq:J-symp}).
\end{enumerate}
\end{enumerate}

Now we are ready to finish the proof that the sets $\mathcal O_r(y)$ are 
definable for $y\in B_{\fg, r}(\rf_\K)$, and depend on $y$ in a definable way.
Let $y\in B_{\fg, r}(\rf_\K)$. 
Then $y=(x, (\epsilon_i)_{i\in I^{\ast}(x)})$, 
where $x\in S_{\fg, r}(\rf_\K)$ and $\epsilon_i=\pm 1$.

By the construction of the subassignment $B_{\fg, r}$, the number
of indices $i$ such
that $\epsilon_i = 1$ equals the cardinality $|I^{\ast}(x)|$ 
by Lemma \ref{lem:cardinality}. 
Let $X\in {\mathcal O}_r(y)$.
We have the element $X_0$ in the stable conjugacy class of $X$ that lies 
in the Kostant section. We can think of the entries of the matrix $X_0$ 
as terms in Denef--Pas language (see \cite{CHL}).  
Now, let $(\tilde c_{i})$ be the $(c_i)$-part of the Waldspurger's datum that 
corresponds to $X_0$, and let $c_0=\phi_{\ast}\circ L (\tilde c_{i})$.  Note 
that all of this can be written as one logical formula in the notation of 
\cite{gordon-hales:transfer}:
$$\exists c_0\in C(X_0) \quad \text{trace-form}(X_0,c_0)$$
(see Definition 24 \loccit for the definition of the term ``trace-form'', and 
Remark 34 \loccit for this statement).
Similarly, for our element $X$, we have an element 
$c\in C(X)$ satisfying the same condition with $X$ replacing $X_0$.
The condition that $X\in {\mathcal O_r(y)}$ can be restated as 
$$
\sgn_{F_i/F_i^{\#}}(c_i\tilde c_{i}^{-1})=\epsilon_i\ \forall i \in I^\ast.
$$
By Lemma 35 \loccit\ \out{quoted above}
(\fto{see}{quoted as} item (3)\ins{ above}), this condition can be 
expressed by a formula in Denef--Pas language, whose variables include 
$x$ (interpreted as a tuple of variables of the residue field), and 
$\epsilon_i$. In particular, the dependence on $y$ is definable. 


\bigskip 
{\sl Part (3)}. 
This follows immediately from Part (1), combined 
with Proposition \ref{prop: separates}.

\bigskip

{\sl Part (4).}
Finally, to prove the last statement, we just need to modify slightly 
the argument
of \cite{cunningham-hales:good}*{Lemma 6.2}. 
For a point $y\in B_{\fg, r}(\rf_\K)$, let $\mathcal O_r(y)=\nu_{r,\K}^{-1}(y)$, 
as above.
It is an open subset of $\fg(r, \K)$. 
Let $X_y$ be an element in $\nu_{r,\K}^{-1}(y)$, and let 
$\ft$ be the Cartan subalgebra containing $X_y$.  
We think of $\mathcal O_r(y)$ 
as a neighbourhood of the orbit of $X_y$, and recall that 
$$\mathcal O_r(y)=\cup_{Y\in \ft(\K)_{r^+}}\mathcal O(X_y+Y).$$ 

Recall the Weyl integration formula:
\begin{equation}\label{eq:weyl}
\begin{aligned}
\int_{\fg(\K)} f(X)dX
&=\sum_{T'} |W(G(\K), T'(\K))|^{-1}
\int_{\ft'(\K)}|D^{\fg}(X)|\int_{G(\K)/T'(\K)} f(\Ad x X) dx\ins\,dX,
\end{aligned}
\end{equation}
where the summation is over the $G(\K)$-conjugacy classes of maximal tori 
$T'$
in $G$. 
Let
$f \in \Hecke_r(\fg(\K))$
be a definable test function (by which we mean that it is a specialization of 
some constructible motivic function), and let
$f_y$ be the 
characteristic function of the set $\mathcal O_r(y)$.
Let us apply the Weyl integration formula to the function $f f_y$.
It follows from the proof of \cite{cunningham-hales:good}*{Theorem 4.6}
that, if two restricted elements $X, X'\in \fg(r, \K)$ satisfy 
$\nu_{r,\K}(X)=\nu_{r,\K}(X')$, then their centralizers are $G(\K)$-conjugate, 
{\it cf.} Proposition \ref{proposition:W&r-equivalence}(\ref{Wdatum1}). 
Therefore, since $f_y$ vanishes outside $\mathcal O_r(y)$, the right-hand side
\ins{of \eqref{eq:weyl}}
will
have only one non-\fto{$0$}{zero} summand, corresponding to $T=C_G(X_y)$. 
Then
the right-hand side equals
$$
|W(G(\K), T(\K))|^{-1}\,\int_{X_y+\ft_{r^+}(\K)}|D^{\fg}(X)|\,
\int_{G(\K)/T(\K)}f(\Ad_x X)\,dx \,dX.
$$
Since the orbital integral
 $$\int_{G(\K)/T(\K)}f(\Ad_x X)\,dx$$ 
as a function of $X$, is constant on the coset $X_y+\ft_{r^+}(\K)$
by Proposition \ref{prop: separates},  we get:
\begin{equation}\label{eq:orbital integral}
\int_{\fg(\K)} ff_y(X)dX
=|W(G(\K), T(\K))|^{-1}|D^{\fg}(X_y)|\mu_{X_y}(f)\vol^{\ast}(X_y+\ft_{r^+}(\K)),
\end{equation}
where $\vol^{\ast}$ is the volume on $\ft(\K)$, (see 
\cite{cunningham-hales:good}*{Section 6} for the discussion of the appropriate normalization for $\vol^{\ast}$). 
The factor 
$|W(G(\K), T(\K))|^{-1}\,|D^{\fg}(X_y)|$ is constructible by 
\cite{cunningham-hales:good}*{Lemma 6.1 and (6.1.3)}. 
The factor $\vol^{\ast}(X_y+\ft_{r^+}(\K))$ is a specialization of a 
constructible motivic function of $y$ by \cite{CL}*{Theorem 10.1.1}, since 
the set $X_y+\ft_{r^+}(\K)$ depends on $y$ in a definable way. 
We denote by $Q_{\fg, r}$ the element of ${\mathcal C}(B_{\fg, r})$ 
that specializes to
$$
|W(G(\K), T(\K))|^{-1}\,|D^{\fg}(X_y)|\,\vol^{\ast}(X_y+\ft_{r^+}(\K)).
$$

The left-hand side of (\ref{eq:orbital integral}) 
is $\int_{\mathcal O_r(y)}f(X) dX$. 
By Part (3), the set $\mathcal O_r(y)$ is definable 
and depends on $y$ in a definable way. 
More precisely, it is a specialization of the fibre over $y$ of an   
an element of $\de_{B_{\fg, r}}$; therefore, the left-hand side of 
(\ref{eq:orbital integral}) is a specialization of a constructible 
motivic function of $y$, by \cite{CL}*{Theorem 10.1.1}.

The last claim now
follows from \cite{CL}*{Theorem 10.1.1} 
together with the results on 
specialization of motivic integrals,
\cite{CL.expo}*{Theorems 6.9 and 7.3}.
\end{proof}


\subsection{Proof of Theorem \ref{thm:distribution characters}}\label{sub: proof thm 2}

\subsubsection{Part (1)} 
Let $\K$ be any local field of residue characteristic bigger than $2$,
$\pi$ a restricted representation of $G(\K)$
of minimal depth $r$,
and
$\Psi$ the associated cuspidal datum (see Section \ref{sub: JKdatum}). 

Recall
from Theorem \ref{thm:main-ext-ingredient}
that there is some element $\Gamma_\Psi \in \fg({-r},\K)$ such
that, for every test function $f$ with support contained in $G(\K)_r\reg$, 
we have 
\begin{equation}\label{eq:FT}
\frac{1}{\deg(\pi)}\Theta_{\pi}(f)=\hat{\mu}_{\Gamma_\Psi}(f\circ \mexp^{-1})=
\mu_{\Gamma_\Psi}(\widehat{f\circ \mexp^{-1}}).
\end{equation}
By Lemma \ref{lemma:  Hecke_r},
$\widehat{f\circ \mexp^{-1}}$ lies in the space
$\Hecke_{-r}(\fg(\K))$.
By Proposition \ref{prop: B_g,r}(1), there exists $M>0$ such that 
if the residue characteristic of $\K$ is greater than $M$,
the element $\Gamma_{\Psi}$ projects to a point $y_{\Psi}\in B_{\fg, -r}(\rf_\K)$.  
By Proposition \ref{prop: B_g,r}(3), the restriction of the 
distribution $\mu_{\Gamma_{\Psi}}$ to the space $\Hecke_{-r}(\fg(\K))$
depends only on the point $y_{\Psi}$.
Hence, the point $y_{\Psi}\in B_{\fg, -r}(\rf_\K)$ determines the  
restriction of $\Theta_{\pi}$ to $G(\K)\reg_r$. 
This argument shows that for fields $\K$ with sufficiently large residue 
characteristic, the map 
$$\pi_\Psi\mapsto \nu_{-r,\K}(\Gamma_{\Psi})$$ is a well defined one-to-one  
map from the set of 
$r$-equivalence classes of restricted representations of $G(\K)$
of minimal depth $r$
to
$B_{\fg, -r}(\rf_\K)$.

\subsubsection{Part\ins{ }(2)}
This follows immediately from the equality \eqref{eq:FT} and 
Proposition \ref{prop: B_g,r}(4).

\subsubsection{Part (3)}
We have proved that the sets ${\mathcal O}^{-r}(y)$ form a definable family of definable sets parameterized by a variable  $y$ running over the definable 
subassignment $B_{\fg, -r}$. Therefore, given a definable family of definable functions $\{f_a\}$ (parameterized by $a$ in some subassignment $A$), 
the expression  
$F(y,a):=\displaystyle\int_{\mathcal O^{-r}(y)}\widehat{f_a\circ \mexp^{-1}}$ 
is a constructible motivic exponential function of $y$ and $a$  by  
\cite{CLF}*{Theorem 4.1.1}.
Then, by the specialization principle \cite{CLF}*{Theorem 9.1.5}, there exists a constant $M'$ (that depends on the family $\{f_a\}$ and on $\fg$ and $r$), such that for all fields $\K\in {\mathcal F}_{\Z,M'}$, the motivic integral
$\displaystyle\int_{\mathcal O^{-r}(y)}\widehat{f_a\circ \mexp^{-1}}$ 
specializes to the $p$-adic integral 
$\displaystyle\int_{\mathcal O^{-r}_{y, \K}}\widehat{f_{a, \K}\circ \mexp^{-1}}$. 

Further, let $M''$ be the constant such that for all 
$\K\in {\mathcal F}_{\Z, M''}$, the specialization principle holds for
the constructible motivic function  $Q_{\fg, -r}\in {\mathcal C}(B_{\fg, -r})$
from Propositon \ref{prop: B_g,r}, Part (\ref{item:Q}).
Note that $M''$ depends only on $\fg$ and $r$.


Let 
$M_0$ be the maximum of the 
 constant $M$ from Part (1) and $M'$, $M''$ defined above.
Let $\K\in {\mathcal F}_{\Z,M_0}$, and 
let $\pi$ be a restricted representation of $G(\K)$ of minimal depth $r$,
as above.
Let
$\Gamma\in \fg(-r, \K)$ be the element corresponding to $\pi$
(see Definition \ref{defn:X*_Psi}\ins), and  let 
$y\in B_{\fg, -r}(\rf_{\K})$ be the image of $\Gamma$ under $\nu_{r, \K}$.
Then, using the 
notation of the proof of Proposition \ref{prop: B_g,r} we can write 
$\Gamma=X_y$.  Let $T$ be the centralizer of $X_y$.
We have (using the equality (\ref{eq:orbital integral})):
\begin{equation*}
\frac{1}{\deg(\pi)}\Theta_{\pi}(f_a)
=\mu_{\Gamma_{\Psi}}(\widehat{f_a\circ \mexp^{-1}})
\frac{1}{Q_{\fg, -r,\K}(y)}F_{\K}(y,a).
\end{equation*}
This, we have shown that 
$\frac{Q_{\fg, -r, \K}(y)}{\deg(\pi)}\Theta_{\pi}(f_a)$
is a constructible exponential function of $a$. Since 
$\frac{Q_{\fg, -r, \K}(y)}{\deg(\pi)}$ is a positive rational number that does not depend on $a$, this completes the proof.
\qed

\begin{remark}
Rephrasing Part (1) of the theorem we just proved, we would like to emphasize 
that we can reconstruct (in an algorithmically computable way)
the values of the 
distribution character, at least at a family of definable test 
functions supported near the identity and satisfying the conditions 
from Definition \ref{def:definable equivalence}, 
from a point $y\in B_{\fg, -r}(\rf_\K)$.
Indeed, starting with a point $y\in B_{\fg, -r}(\rf_\K)$, we can construct the fibre 
${\mathcal O}^{-r}_{y,\K}$ of the subassignment 
${\mathcal O}^{-r}$ from Proposition \ref{prop: B_g,r}(3), 
for all local fields \K with residue characteristic bigger than a constant, 
which we will denote by $M_{\fg, r}$, that depends only on $\fg$ and $r$. 
Now, let us consider the family $\xi_{a,n}$ of Section \ref{subsub:thefamily}. By Lemma 
\ref{lem:thefunctions}, there exists a constant $M_{\xi,\fg,r}>0$, such that for 
$\K\in {\mathcal F}_{\Z, M_{\xi,\fg,r}}$, 
  the motivic integral $\displaystyle\int_{\mathcal O^{-r}}\xi_{a,n}$
specializes to the integral 
$\displaystyle\int_{\mathcal O^{-r}_{y, \K}}\xi_{a,n, \K}\,dg$, where the latter integral is with respect to 
the Haar measure on $G(\K)$. Here we apply Lemma \ref{lem:thefunctions} to the family 
$\{f_{\alpha}\}$ that consists of 
the characteristic functions of the thickened orbits $\mathcal O^{-r}_{y, \K}$, 
indexed by 
the points $(y, \K)$ of the definable subassignment $B_{\fg, -r}$.
The constant $M_{\xi, \fg, r}$ depends on $M_{\xi}$ and on the formulas defining the subassignment 
$B_{\fg, -r}$, which, in turn, depend only on $G$ and $r$. For the fields of characteristic zero, where we know that the distribution character is a continuous distribution, this information is equivalent to knowing the restriction of the
distribution character to $G(\K)_r$. 
\end{remark}

\subsection{Proof of Theorem \ref{thm:character} in the 
characteristic-zero case}\label{sub: proof thm 3}

We will use J.~Korman's theorem on the local constancy of characters 
\cite{korman:local-constancy}, which
we quote here.  That paper assumes that characteristic of $\K$ is zero,
so we will assume that throughout this section.

Let $\gamma\in G(\K)\reg$ be a {\it compact} element,
and assume that the connected component of the  centralizer of $\gamma$ is a torus that splits over a 
tamely ramified extension $E$ of $\K$. 

Let $T=C_G(\gamma)^{\circ}$.
Then the \term{regular depth} of $\gamma$ 
(see \cite{korman:local-constancy}*{Definition 1.1}
or \cite{adler-korman:loc-char-exp}*{Definition 4.1}, where
the term \emph{singular depth} is used instead)
is the rational number 
$$
s(\gamma)=\max\{\ord_\K(\alpha(\gamma)-1)\mid \alpha\in {\Phi}(G, T)\}.
$$
Note that the following theorem does not require
our strong hypotheses on the representation $\pi$.

\begin{theorem}[\cite{korman:local-constancy}*{Theorem 4.1}]
\label{thm:korman}
Let
$\pi$ be an irreducible admissible representation and choose
$r'>\max\{s(\gamma),\depth(\pi)\}$, where $\depth(\pi)$ is the depth of the 
representation $\pi$. Then 
the distribution $\Theta_{\pi}$ is represented on the set 
$\lsup{G(\K)}\bigl(\gamma T(\K)_{r'+s(\gamma)}\bigr)$ by a constant function.
\end{theorem}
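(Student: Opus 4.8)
The plan is to deduce the statement from the (made explicit) range of validity of the local character expansion at $\gamma$, combined with Harish--Chandra's semisimple descent. Since $\theta_\pi$ is a class function it suffices to prove that $\theta_\pi(\gamma t) = \theta_\pi(\gamma)$ for every $t \in T(\K)_{r' + s(\gamma)}$; the $G(\K)$-conjugates making up $\lsup{G(\K)}\bigl(\gamma T(\K)_{r'+s(\gamma)}\bigr)$ then cost nothing. Because $\gamma$ is compact it fixes a point of $\BB(G,\K)$, and because $C_G(\gamma)^\circ = T$ is a tame torus we may choose such a point $x$ in the image of $\BB(T,\K)$, so that $T(\K)_m = T(\K)\cap G(\K)_{x,m}$ for all $m$ and $\gamma$ normalises every $G(\K)_{x,m}$; note that each $t$ as above is topologically unipotent, so $\gamma t$ is again regular semisimple and compact.

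The first step is to recall that, since $\chr\K = 0$, Harish--Chandra's theory (see \cite{hc:harmonic}) provides a local character expansion at the semisimple element $\gamma$: for $X$ in some neighbourhood of $0$ in the centraliser $\fg_\gamma(\K)$ of $\gamma$ one has $\theta_\pi(\gamma\exp X) = \sum_{\mathcal O} c_{\mathcal O}(\pi,\gamma)\,\hat\mu_{\mathcal O}(X)$, the sum over nilpotent orbits of $\fg_\gamma$, under a suitable exponential-type parametrisation. But $\gamma$ is \emph{regular} semisimple, so $\fg_\gamma = \ft$ is a Cartan subalgebra, the only nilpotent orbit is $\{0\}$, and $\hat\mu_{\{0\}}$ is the constant function; hence the expansion says merely that $\theta_\pi$ is \emph{constant} on a neighbourhood $\gamma U$ of $\gamma$ with $U\subseteq T(\K)$ open. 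All of the content is the quantitative assertion that one may take $U \supseteq T(\K)_{r'+s(\gamma)}$.

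To obtain this explicit radius I would carry out Harish--Chandra's semisimple descent with Moy--Prasad bookkeeping. Near $\gamma$ the distribution $\Theta_\pi$ descends to an invariant distribution on $C_G(\gamma)(\K)$; concretely, the germ of $\theta_\pi$ at $\gamma$ is a finite sum of germs at $1$ of characters of representations of $C_G(\gamma)(\K)$ of depth at most $\depth(\pi)$, hence --- $C_G(\gamma)^\circ = T$ being a torus --- by restrictions of linear characters of $T(\K)$ that are trivial on $T(\K)_{r'}$ (here one uses $r' > \depth(\pi)$). This already gives constancy of $\theta_\pi$ on a neighbourhood of $\gamma$; what is left is to see how the identification ``neighbourhood of $\gamma$ in $G(\K)$'' $\leftrightarrow$ ``neighbourhood of $1$ in $T(\K)$'' distorts filtration levels. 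The descent is legitimate --- and its output genuinely represents $\Theta_\pi$ near $\gamma$ --- only on a neighbourhood of $\gamma$ whose radius strictly exceeds $s(\gamma)$: the ``transverse'' integral defining the descent converges, and is independent of the transverse variable, precisely because $\ord_\K(\alpha(\gamma)-1) \le s(\gamma)$ for every absolute root $\alpha$ of $T$, which controls the relevant Jacobian and the $\Ad(\gamma)$-twisted Moy--Prasad filtrations of the root subspaces. Combining these two bounds --- triviality on $T(\K)_{r'}$ on the torus side, a descent defect of size $s(\gamma)$ on the transverse side --- yields the set $\gamma T(\K)_{r'+s(\gamma)}$.

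I expect the last step to be the main obstacle: making the range of validity of the expansion at $\gamma$ genuinely explicit, and in particular seeing why $\depth(\pi)$ (absorbed into $r'$) and the regular depth $s(\gamma)$ enter \emph{additively}. Concretely, this comes down to a delicate matching, under the (mock) exponential map, of the Moy--Prasad lattices of $\fg$ near $\gamma$ with those of $\fg_\gamma = \ft$ near $0$ --- the $s(\gamma)$-shift being exactly the defect in that comparison caused by $\gamma$ failing to centralise the root subspaces --- carried out uniformly enough to absorb at the same time the contribution of the depth of $\pi$ through the homogeneity of Fourier transforms of orbital integrals (in the spirit of DeBacker's homogeneity results, extended to a neighbourhood of a semisimple element as in \cite{adler-korman:loc-char-exp}). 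A further subtlety, which the statement sidesteps by working with $T = C_G(\gamma)^\circ$ rather than all of $C_G(\gamma)$, is the contribution of the component group $C_G(\gamma)/T$.
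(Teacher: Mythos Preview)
The paper does not prove this theorem: it is quoted verbatim from \cite{korman:local-constancy}*{Theorem 4.1}, and the authors simply remark afterwards that it also follows from \cite{adler-korman:loc-char-exp}*{Corollary 11.9} (which removes the characteristic-zero hypothesis). There is therefore no ``paper's own proof'' to compare against.

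That said, your sketch is the right shape for the proof in the cited references. The reduction to showing $\theta_\pi(\gamma t) = \theta_\pi(\gamma)$ for $t \in T(\K)_{r'+s(\gamma)}$, the observation that for regular $\gamma$ the centraliser is the torus $T$ so the local character expansion at $\gamma$ degenerates to a single constant term, and the identification of $s(\gamma)$ as the filtration defect in Harish--Chandra descent --- these are exactly the ingredients Korman uses. Your honest flag on the main difficulty (making the range of validity explicit, with the additive contribution of $\depth(\pi)$ and $s(\gamma)$) is also accurate: this is precisely the content of DeBacker's homogeneity theorem, transported to a neighbourhood of a semisimple element via the descent machinery, which is what \cite{adler-korman:loc-char-exp} carries out in detail. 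The one place your sketch is slightly loose is the sentence about the germ of $\theta_\pi$ at $\gamma$ being ``a finite sum of germs at $1$ of characters of representations of $C_G(\gamma)(\K)$ of depth at most $\depth(\pi)$'': what descent actually produces is an invariant distribution on the centraliser (not literally a combination of characters), to which one then applies the homogeneity/local-expansion results directly; but since $C_G(\gamma)^\circ = T$ is a torus this distinction is harmless here.
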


In fact, this theorem follows from Corollary 11.9 of
\cite{adler-korman:loc-char-exp} (since, in the notation of
that result, we have $\mf m = \Lie(T)(\K)$ in our case, so that
$\mc O_{\mf m}(0) = \sset0$),
which makes no asumption on 
the characteristic of the field $\K$.
Hence, the agument presented in this section  for the fields of characteristic zero works 
in positive characteristic as well. 
However, since part of the strength of motivic integration lies in the transfer between 
characteristic-zero and positive-characteristic cases, we decided to include the next section, in which we derive the positive-characteristic version of Theorem \ref{thm:character} from the 
characteristic-zero case.



Now let us assume that the residue characteristic $p$ is larger than
$n = n_G$, in the notation of \S\ref{sec:hyps},
and that $\gamma \in G(\K)_r\reg$%
.
Then, since $G$ embeds in $\GL_n$ and
(in this setting) every torus in $\GL_n$ splits over a tame
extension of \K,
the condition that the 
centralizer of $\gamma$ splits over a tamely ramified extension is 
fulfilled automatically.
Since $\gamma$ is compact, we can apply
Theorem \ref{thm:korman}.

Let $G_{0^+}\reg$ be the subassignment that specializes, for every 
$\K\in {\mathcal F}_{\Z, M}$, to the set of regular topologically unipotent 
elements $G(\K)_{0^{+}}\reg$ (see \S\ref{subsub:definable}).
Thus, the term ``definable function on $G(\K)_{0^+}\reg$'' really
means ``specialization of a definable function on $G_{0^+}\reg$''.

\begin{lemma}\label{lem:s_gamma}
There exists $M>0$, and a definable function $\phi(\gamma)$ on the 
set $G(\K)_{0^+}\reg=G_{0^+, \K}\reg$ that satisfies
$\phi(\gamma)\ge s(\gamma)$ for all $\gamma\in G(\K)_{0^+}\reg$, for all 
$\K\in {\mathcal F}_{\Z, M}$. 
\end{lemma}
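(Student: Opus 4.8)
The plan is to produce such a $\phi$ \emph{without} computing $s(\gamma)$ exactly; it is enough to exhibit a single definable overestimate. The natural candidate is the valuation of the Weyl discriminant, transported from $\fg$ to the group by the Cayley transform: set
\[
\phi(\gamma) \ceq \ord_\K\bigl(D^{\fg}(\mexp^{-1}(\gamma))\bigr),
\]
where $D^{\fg}$ is the Weyl discriminant on $\fg$, so that $D^{\fg}(X) = \prod_\alpha d\alpha(X)$, the product running over the absolute roots $\alpha$ of the maximal torus containing a regular semisimple element $X$. This is precisely the kind of expression already used, in the form $\ord(D^{\fg}(\mexp^{-1}(g)))$, in the definition of the sets $\Omega_n$ in \S\ref{subsub:thefamily}.

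First I would check that $\phi(\gamma) \ge s(\gamma)$. Fix $\gamma \in G(\K)_{0^+}\reg$ and put $X = \mexp^{-1}(\gamma)$, a regular semisimple element of $\fg(\K)_{0^+}$; let $T$ be the maximal torus with $\Lie T = C_\fg(X)$. Since the Cayley transform is a rational involution, $C_G(\gamma)^\circ = T$ as well, so $s(\gamma)$ is the maximum of $\ord_\K(\alpha(\gamma) - 1)$ over the absolute roots $\alpha$ of $T$ in $G$. For each such $\alpha$ we have $d\alpha(X) \ne 0$ by regularity, so the third listed property of \mexp in \S\ref{sec:hyps} (see page~\pageref{exp-rvalue}), applied over $\K\unram$, gives $\ord_\K(\alpha(\gamma) - 1) = \ord_\K(d\alpha(X))$. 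Moreover $X$ is topologically nilpotent, hence so is $\ad X$, and therefore each of its eigenvalues $d\alpha(X)$ satisfies $\ord_\K(d\alpha(X)) > 0$. Consequently
\begin{align*}
s(\gamma) = \max_{\alpha}\ord_\K(\alpha(\gamma)-1)
&= \max_{\alpha}\ord_\K(d\alpha(X))
\le \sum_{\alpha}\ord_\K(d\alpha(X)) \\
&= \ord_\K\Bigl(\prod_{\alpha} d\alpha(X)\Bigr)
= \ord_\K\bigl(D^{\fg}(X)\bigr) = \phi(\gamma).
\end{align*}
The passage from maximum to sum is the only place positivity of the individual terms is used, and it is exactly what lets an overestimate suffice.

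It remains to see that $\phi$ is definable. On the regular semisimple locus the polynomial $\det(t\cdot\id - \ad X)$ has $0$ as a root of multiplicity exactly $\operatorname{rank}G$, so $D^{\fg}(X)$ is, up to sign, the coefficient of $t^{\operatorname{rank}G}$ in $\det(t\cdot\id - \ad X)$; in particular it is a polynomial in the matrix entries of $X$, hence a definable function of $X$. Since $\mexp^{-1}$ is a rational function of the matrix entries of its argument (see \S\ref{subsub:definable}), $\gamma \mapsto D^{\fg}(\mexp^{-1}(\gamma))$ is definable on $G(\K)_{0^+}\reg$, and it is non-vanishing there by regularity, so $\phi = \ord_\K \circ D^{\fg} \circ \mexp^{-1}$ is a well-defined, $\Z$-valued definable function. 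For the constant $M$ one takes it large enough that the subassignment $G_{0^+}\reg$ of \S\ref{subsub:definable} specializes correctly, that the properties of \mexp recorded in \S\ref{sec:hyps} (in particular the one on page~\pageref{exp-rvalue}) are valid, and that the connected centralizer of every regular element of $G(\K)_{0^+}$ is a torus — all of which hold once $p > n_G$, which is already in force. There is no real obstacle here: the only point requiring care is to route the overestimate through an object (the Weyl discriminant) that is manifestly polynomial, hence definable, rather than through $s(\gamma)$ itself.
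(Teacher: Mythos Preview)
The proposal is correct and follows essentially the same approach as the paper: define $\phi(\gamma)=\ord_\K D^{\fg}(\mexp^{-1}(\gamma))$, use that the root values $d\alpha(X)$ have positive valuation to bound the maximum by the sum, and note that $D^{\fg}$ composed with $\mexp^{-1}$ is definable. Your write-up is in fact slightly more explicit than the paper's, e.g.\ in invoking the third listed property of $\mexp$ to identify $\ord_\K(\alpha(\gamma)-1)$ with $\ord_\K(d\alpha(X))$, which the paper simply asserts.
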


\begin{proof}
Let $X=\mexp^{-1}(\gamma)$, a regular\ins, semisimple\ins, topologically nilpotent element in $\fg(\K)$. 
Then 
$s(\gamma)=\max\{\ord_\K(d\alpha(X))\mid \alpha\in {\Phi}(G, T)\}$. 
Note that $D^{\fg}(X)=\det \ad(X)$ is obviously a definable function of $X$, and it has the expression 
$$D^{\fg}(X)=\det\ad(X)=\prod_{\alpha}d\alpha(X).$$ 
Therefore, since $d\alpha(X) \in \mc O_\K$ for every
$\alpha \in \Phi(G, T)$, we have for all such $\alpha$ that
$$
\ord_\K(d\alpha(X))\le 
\ord_\K D^{\fg}(X);
$$
so we can take $\phi(\gamma)=\ord_\K D^{\fg}(X)$.
\end{proof}

With some care to handle the case when $T$ is not
split, we expect that we could show that $s(\gamma)$ itself is a
definable function.  However, for our purposes, this is not
necessary.

\begin{corollary}\label{cor:T_gamma}

 There exists $M>0$ and a definable subassignment ${\mathcal T}$ in the 
category  $\de_{G_{0^+}\reg}$, such that for every 
$\K\in {\mathcal F}_{\Z, M}$, and every $\gamma\in G(\K)_{0^+}\reg$, the fibre 
${\mathcal T}_{\gamma, \K}$
is the set
$\gamma T(\K)_{r+2\phi(\gamma)}$. 
The restriction of the character
$\theta_{\pi}$ to each of the sets 
$\gamma T(\K)_{r+2\phi(\gamma)}$ is constant.
\end{corollary}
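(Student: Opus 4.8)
The plan is to produce the definable subassignment $\mathcal T$ by combining the definability results already in hand: we know $G_{0^+}\reg$ is a definable subassignment (see \S\ref{subsub:definable}), and by Lemma \ref{lem:s_gamma} the function $\phi(\gamma) = \ord_\K D^{\fg}(\mexp^{-1}(\gamma))$ is a definable function on it. Since $r$ is a fixed rational number, $r + 2\phi(\gamma)$ is again a definable $\Z$-valued (up to bounded denominator) function of $\gamma$. The remaining ingredient is the observation that, for $\gamma \in G(\K)_{0^+}\reg$, the connected centralizer $T = C_G(\gamma)^\circ$ is a torus whose $\K$-points can be described uniformly: $T(\K)$ is the set of $g \in G(\K)$ commuting with $\gamma$, intersected with the topologically unipotent set, and the Moy--Prasad filtration subgroup $T(\K)_t$ can be cut out by a definable condition on $\ord_\K$ of the relevant root values (equivalently, via $\mexp$, on the depth of $\mexp^{-1}(t)$ in the Cartan subalgebra $\ft = \Lie T$), using that $\mexp$ and its inverse are rational maps. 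Thus the assignment $\gamma \mapsto \gamma T(\K)_{r + 2\phi(\gamma)}$ is given by a single formula in the Denef--Pas language with $\gamma$ as a parameter, i.e.\ a definable subassignment $\mathcal T$ in $\de_{G_{0^+}\reg}$.

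First I would make precise the definability of the family $\gamma \mapsto T(\K)_{r + 2\phi(\gamma)}$. The cleanest route is to pass through the Lie algebra: $\ft(\K) = C_\fg(\mexp^{-1}(\gamma))$, and $\ft(\K)_t$ is $\{Y \in \ft(\K) : \depth(Y) \ge t\}$, where depth in a tame maximal torus is controlled by the $\K$-normalised valuations of the root values $d\alpha(Y)$ (using $p \ne 2$ and the bound on the relevant lattice indices recalled in Remark \ref{rem:good-depth}, so that filtration and root-value conditions agree up to the integer $1$). Since $\gamma$ is regular, the roots of $T$ are the distinct linear factors of the characteristic polynomial of $\ad(\mexp^{-1}(\gamma))$, so the condition ``$\ord_\K(d\alpha(Y)) \ge t$ for all $\alpha$'' is expressible by a ring/valued-field formula in the coefficients of that characteristic polynomial and in $Y$, with $t = r + 2\phi(\gamma)$ substituted in as a definable function of $\gamma$. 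Translating back by $\mexp$ gives $\gamma T(\K)_{r + 2\phi(\gamma)}$ as the specialization of the fibre over $\gamma$ of a definable subassignment $\mathcal T \in \de_{G_{0^+}\reg}$, valid for all $\K \in \mathcal F_{\Z, M}$ with $M$ large enough to absorb the finitely many primes discarded in these constructions and in \S\ref{subsub:definable}.

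For the second assertion, that $\theta_\pi$ is constant on each $\gamma T(\K)_{r + 2\phi(\gamma)}$, I would apply Theorem \ref{thm:korman} directly: take $r' = r + \phi(\gamma)$, which (enlarging $M$ so that $p > n_G$, forcing $T$ to split over a tame extension and $\gamma$ to be compact) satisfies $r' > \phi(\gamma) \ge s(\gamma)$ and $r' > r = \depth(\pi)$ since $\phi(\gamma) > 0$ on $G(\K)_{0^+}\reg$ (as $\gamma$ is topologically unipotent and regular, $D^\fg(\mexp^{-1}(\gamma))$ is a nonzero topologically nilpotent element, so its valuation is a positive rational). Then Theorem \ref{thm:korman} gives that $\Theta_\pi$, hence $\theta_\pi$, is represented by a constant function on $\lsup{G(\K)}(\gamma T(\K)_{r' + s(\gamma)})$, and since $s(\gamma) \le \phi(\gamma)$ we have $r' + s(\gamma) \le r + 2\phi(\gamma)$, so $\gamma T(\K)_{r + 2\phi(\gamma)} \subseteq \gamma T(\K)_{r' + s(\gamma)}$ and the constancy restricts to it.

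The main obstacle I anticipate is the first step: writing down, genuinely uniformly in $\gamma$ and in the field $\K$, a Denef--Pas formula for the filtration subgroup $T(\K)_t$ when $T$ is an arbitrary tame (possibly ramified) maximal torus, since the Moy--Prasad indexing set $\tR$ and the lattice structure of $\ft(\K)$ depend on the ramification of $T$. This is exactly the kind of bookkeeping carried out in \cite{gordon-hales:transfer} (and implicitly in the definability of $\fg_{0^+}\reg$ in \S\ref{subsub:definable}); the remark following Lemma \ref{lem:s_gamma} already flags that pinning down $s(\gamma)$ itself requires ``some care to handle the case when $T$ is not split,'' and the same care is what is needed here. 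The saving grace is that we only need an \emph{upper} bound $\phi(\gamma) \ge s(\gamma)$ and a definable \emph{super}set of the Korman neighbourhood, not the neighbourhood on the nose, so the crude estimate $s(\gamma) \le \ord_\K D^\fg(\mexp^{-1}(\gamma))$ from Lemma \ref{lem:s_gamma} together with the containment $T(\K)_{r + 2\phi(\gamma)} \subseteq T(\K)_{r' + s(\gamma)}$ is enough, and the containment of filtration subgroups is monotone in the index regardless of the normalization subtleties.
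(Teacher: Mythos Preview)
Your proposal is essentially correct, and since the paper states this result as a corollary without proof, your argument is precisely the one the authors have in mind: definability of the fibre follows from Lemma \ref{lem:s_gamma} together with a uniform description of the torus filtration, and constancy follows by applying Theorem \ref{thm:korman} with $r' = r + \phi(\gamma)$ (so that $r' > r = \depth(\pi)$ because $\phi(\gamma) > 0$, and $r' > s(\gamma)$ because $r > 0$ and $\phi(\gamma) \ge s(\gamma)$), then observing $r + 2\phi(\gamma) \ge r' + s(\gamma)$ so that $\gamma T(\K)_{r+2\phi(\gamma)} \subseteq \gamma T(\K)_{r'+s(\gamma)}$.

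One wording slip in your final paragraph: you write that we need ``a definable \emph{superset} of the Korman neighbourhood,'' but in fact $\gamma T(\K)_{r+2\phi(\gamma)}$ is a \emph{subset} of the Korman neighbourhood $\gamma T(\K)_{r'+s(\gamma)}$ (larger filtration index means smaller subgroup), and that is exactly what is needed for constancy to restrict. Your inequalities earlier in the proposal have this right; only the summary sentence is inverted. Otherwise the identification of the main technical point---writing down the filtration $T(\K)_t$ uniformly in $\gamma$ for possibly ramified $T$---and the observation that the crude bound $\phi(\gamma) \ge s(\gamma)$ spares us from having to pin down $s(\gamma)$ itself, are both on target.
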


\begin{remark}
We observe that the corollary implies that the sets 
$\lsup{G(\K)}\bigl(\gamma T(\K)_{r+2\phi(\gamma)}\bigr)$
can also be thought of as the fibres of a definable subassignment.
However, we cannot tell whether the individual level sets 
$\lsup{G(\K)}\bigl(\gamma T(\K)_{r'+s(\gamma)}\bigr)$ are definable 
(\ie, whether they are in $\de_{\Q}$)---%
\textit{a priori}, there is no reason why we could eliminate the parameter $\gamma$, 
so this corollary does not automatically imply that $\theta_{\pi}$ is 
a constructible exponential function.  
\end{remark}

It follows from the above corollary that  $\lsup{G(\K)}{\mathcal T}_{\gamma, \K}$ is a definable neighbourhood of the conjugacy class 
of $\gamma$ on which the character $\theta_\pi$ is constant.
We would like to get a compact definable neighbourhood of $\gamma$, with 
the additional property that its volume is an invertible element in the 
ring of values of the motivic measure.  First, we replace the neighbourhood
$\lsup{G(\K)}{\mathcal T}_{\gamma, \K}$ by something compact.
Let $x$ be the hyperspecial vertex in $\BB(G, \K)$ such that
$G(\K)_{x, 0}=G({\mathcal O}_\K)$---which is, clearly, definable. 
Then by the above corollary, 
there is a definable subassignment, which we will denote by ${\mathcal T}^0$,
such that 
\begin{equation*}
{\mathcal T}^0_{\gamma, \K}=
\lsup{G(\K)_{x, 0}}\bigl(\gamma T(\K)_{r+2\phi(\gamma)}\bigr)
\end{equation*} 
for $\K$ of sufficiently large residue characteristic and $\gamma\in G(\K)$.
The set ${\mathcal T}^0_{\gamma, \K}$ is a compact, definable neighbourhood of 
$\gamma$ on which $\theta_{\pi}$ is constant.
We have that $\vol\bigl({\mathcal T}^0_{\gamma, \K}\bigr)$ is a 
constructible motivic function of $\gamma$, since it computes the volumes of 
the fibres of a definable subassignment.
However, later we would like to divide another constructible motivic function 
by this volume function, and it might not be an invertible element in the ring of 
constructible motivic functions.  The next lemma shows how to
fix this by passing to a further subassignment.
 


\begin{lemma}\label{lem:U_gamma}
There exists a definable subassignment 
$U$ in $\de_{G_{0^+}\reg}$, such that for every $\K\in {\mathcal F}_{\Z, M}$, 
for every $\gamma\in {G(\K)_{0^+}\reg}$,
we have $U_{\gamma, \K}\subset  {\mathcal T}^0_{\gamma, \K}$, and
the motivic volumes of the fibres $U_\gamma$ are of the form 
$\lef^{n(\gamma)}$, with $n(\gamma)$ a $\Z$-valued constructible function 
on  $G_{0^+}\reg$.
\end{lemma}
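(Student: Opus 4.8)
The plan is to inscribe inside each compact neighbourhood ${\mathcal T}^0_{\gamma,\K}$ a smaller, still definable, neighbourhood of $\gamma$ whose \emph{motivic} volume is a genuine power of $\lef$. By Corollary~\ref{cor:T_gamma} the fibres ${\mathcal T}^0_{\gamma,\K}$ already form a definable family of compact neighbourhoods of $\gamma$, so their motivic volume is a constructible function of $\gamma$; the point is only that this function need not be \emph{invertible}, since the volume of a parahoric-type set carries factors coming from the cardinalities of reductive quotients over $\rf_\K$, which are polynomials and not monomials in $\lef$. I would therefore cut ${\mathcal T}^0_{\gamma,\K}$ down, definably in $\gamma$, to a translated lattice pushed through the Cayley transform $\mexp$. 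Note that one cannot simply take $\gamma$ times a parahoric filtration subgroup: $\gamma$ need not lie in $G({\mathcal O}_\K)$, and in any case the level-zero parahoric volume is not monomial in $\lef$.

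For the construction, put $\fg({\mathcal O}_\K):=\fg(\K)\cap M_{n_G}({\mathcal O}_\K)$, a $\Z$-definable ${\mathcal O}_\K$-lattice in $\fg(\K)$, free of rank $\dim\fg$ since $\fg$ is a split classical Lie algebra defined over $\Z$; for the self-dual measure fixed in Remark~\ref{remark:dep-on-haar} its motivic volume is $\lef^{c_0}$ for a constant $c_0$ depending only on $\fg$ (indeed $c_0=0$ once $p$ is large), so for an integer $m\ge1$ the lattice $\varpi^m\fg({\mathcal O}_\K)$ has motivic volume $\lef^{c_0-m\dim\fg}$ and depends definably on $m$. Given $\gamma\in G(\K)_{0^+}\reg$, set $X:=\mexp^{-1}(\gamma)\in\fg(\K)_{0^+}\reg$ and define the fibre
\[
U_{\gamma,\K}:=\mexp\bigl(X+\varpi^{m(\gamma)}\fg({\mathcal O}_\K)\bigr)
\]
for a definable integer $m(\gamma)\ge1$ chosen below; note $\gamma\in U_{\gamma,\K}$, so it is nonempty. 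For every topologically nilpotent $Z$ the matrix $1-Z$ has unit determinant, and one computes $d\mexp_Z(Y)=2(1-Z)^{-1}Y(1-Z)^{-1}$; hence, since $p\ne2$, the Jacobian of $\mexp$ has $\K$-valuation $0$ on any polydisc contained in $\fg(\K)_{0^+}$, so $\mexp$ is exactly volume-preserving there. It follows that $U_{\gamma,\K}$ has motivic volume $\lef^{n(\gamma)}$ with $n(\gamma):=c_0-m(\gamma)\dim\fg$.

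To finish, I would take
\[
m(\gamma):=\min\bigl\{\,m\ge1 : X+\varpi^m\fg({\mathcal O}_\K)\subseteq\fg(\K)_{0^+}\reg \ \text{and}\ \mexp\bigl(X+\varpi^m\fg({\mathcal O}_\K)\bigr)\subseteq{\mathcal T}^0_{\gamma,\K}\,\bigr\}.
\]
Both conditions are definable in $(\gamma,m)$ — the first by the definability of $\fg_{0^+}\reg$ established in \S\ref{subsub:definable} together with a bounded universal quantifier over the lattice, the second by the definability of the subassignment ${\mathcal T}^0$ from Corollary~\ref{cor:T_gamma} and of $\mexp$, again with a bounded quantifier — so $m(\gamma)$ is a $\Z$-valued constructible (Presburger) function on $G_{0^+}\reg$, and the subassignment $U$ of $G_{0^+}\reg\times G$ with fibre $U_{\gamma,\K}$ over $\gamma$ is an object of $\de_{G_{0^+}\reg}$. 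Such an $m$ exists for each $\gamma$ because $\fg(\K)_{0^+}\reg$ is open, ${\mathcal T}^0_{\gamma,\K}$ is a neighbourhood of $\gamma=\mexp(X)$, and the polydiscs $X+\varpi^m\fg({\mathcal O}_\K)$ shrink to $\{X\}$ as $m\to\infty$ while $\mexp$ is continuous. The step needing the most care is the \emph{exactness} of the volume formula for $U_{\gamma,\K}$: it is precisely this that forces us to work with a translated lattice rather than a parahoric-type set and to invoke $p\ne2$, via the valuation-zero computation of the Jacobian of $\mexp$.
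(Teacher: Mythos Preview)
Your proof is correct and takes a genuinely different route from the paper's. The paper constructs $U_\gamma$ by passing to coordinates on (a translate of) the big Bruhat cell $N^-T_{\mathrm{spl}}N$: in those coordinates a Haar measure is the product form $dy\wedge dx\wedge\prod ds_i/s_i$, and the fibre $U_{\gamma,\K}$ is taken to be the largest coordinate ball around $\gamma$ that is contained in $\Omega(\K)\cap{\mathcal T}^0_{\gamma,\K}$ and on which each $|s_i|$ is constant; finitely many definable translates of the cell are then used to cover those $\gamma$ not lying in the first cell. Your approach instead pulls everything back through the Cayley transform to the Lie algebra and uses a translated $\mc O_\K$-lattice, so that the volume computation reduces to the obviously monomial volume of $\varpi^m\fg(\mc O_\K)$ together with the valuation-zero Jacobian of $\mexp$. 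Your argument is more streamlined---no case split over cell translates, and it reuses the map $\mexp$ already central to the paper---while the paper's version has the advantage of making the Haar form completely explicit in coordinates and of avoiding any appeal to the change-of-variables formula for the motivic measure on $G$.
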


\begin{proof}
Note that ${\mathcal T}^0_{\gamma, \K}$ is an open subset of $G(\K)$ 
that depends only on the parameter $\gamma$.
Let $d$ stand for the dimension of $G$ (as an algebraic variety---which coincides with its dimension as a definable subassignment in the sense of 
\cite{CL}*{Section 3.1}, as well as with the dimension of $G(\K)$ as a 
$p$-adic manifold).
We will call a subset of $\A^d(\K)$ a \term{ball} if it is defined by a formula
$\ord(x_i-a_i)\ge r_i$ in the variables $(x_1, \dots, x_d)$ for some element 
$a=(a_i)_{1\le i\le d}\in \A^d(\K)$, and  for $r=(r_i)_{1\le i \le d}\in\Z^d$.  
Notice that the 
volume of such a $p$-adic ball with respect to the Haar measure on 
$\A^d(\K)$,
normalized so that $\meas \mc O_\K^d = 1$,
is of the form  $q^n$, where the power $n \in \Z$ depends on $r$.
Now, 
the idea is to make a subassignment $U$ such that $U_{\gamma,\K}$ is a  
ball (with respect to some suitable local coordinates around $\gamma$). 
 
One convenient choice of coordinates on $G$ for the study of Haar measure comes
from a big Bruhat cell. Let $T\textsub{spl}$ be a maximal \K-split torus in $G$, $B$ a 
Borel subgroup containing $T\textsub{spl}$, $N$ its unipotent radical,
$B^-$ the opposite 
Borel subgroup (with respect to $T\textsub{spl}$),
and $N^-$ the opposite unipotent radical $N^-$.
The subvariety $\Omega \ceq N^- T\textsub{spl}N$ of $G$,
which is a translate of the standard big cell in $G$,
is isomorphic to the Cartesian product
$N^- \times T\textsub{spl} \times N$.
The complement of $\Omega(\K)$ in  $G(\K)$ has Haar measure $0$. 
As varieties, $N$ and $N^-$ are isomorphic to affine   
spaces of the same dimension,
and $T$ to an open subset of an affine space.
Let $x_1,\dotsc, x_s$ be coordinates on $N$;
$y_1, \dotsc, y_s$ coordinates on $N^-$;
and
$\fto t s_1,\dotsc, \fto t s_r$ coordinates on $T\textsub{spl}$.
With this choice of coordinates,
the extension by $0$ to $G(\K)$ of the volume form
$$\omega=dy_1\wedge \dots \wedge dy_s\wedge dx_1\dots \wedge dx_s\wedge
\frac{ds_1}{s_1}\wedge \dots \wedge\frac{ds_r}{s_r}$$ 
on $\Omega(\K)$ is a Haar measure.
To abbreviate, we will write $(x,y,s)$ for the coordinates, meaning the 
corresponding tuples.

For every 
$\gamma\in  G(K)_{0^+}\reg\cap\Omega(\K)$, there exists a ball, 
(with respect to 
the coordinates $(x,y,s)$, in the sense of the definition above) 
that contains $\gamma$, and is contained in
\label{left-fibre}
$\Omega(\K)\cap {\mathcal T}^0_{\gamma, \K}$, and such that
$|s_i|$ is constant on this ball for $i=1,\dotsc, r$.
For every $\gamma$, there exists unique largest ball with these properties.
We denote it by $C_1(\gamma)$.
By the definition of the volume form $\omega$, the volume of this ball 
depends on $\gamma$ in a 
definable way, and is of the required form.
We define the subassignment $U$ of $G_{0^+}\reg\times \A^d$ in such a way that
$U_{\gamma,\K}=C_1(\gamma)$ for $\gamma\in  G(K)_{0^+}\reg\cap\Omega(\K)$.

Finally, there are finitely many translates of the set $\Omega(\K)$ 
that cover $G(\K)$.
Let us denote them by $\Omega_1, \dotsc, \Omega_l$, 
with $\Omega_1=\Omega(\K)$ (where $l$ depends only on $G$). 
These translates can be chosen to be definable.
For the elements $\gamma$ in  
$\Omega_{j}(\K)\setminus \cup_{i=0}^{j-1}\Omega_i(\K)$, 
we define the ball $U_{\gamma,\K}$ in the same way, 
except using the coordinates on $\Omega_j$.


\end{proof}

\subsubsection{Completing the proof}\label{subsub:end of proof}
Now we are ready to prove that there is a constructible
motivic exponential function on $G_{0^+}\reg$ whose
specialization has the same restriction to
$G(\K)_r\reg$ as the function $\theta_{\pi}$.
Let $r$ be the depth of $\pi$, as before (and recall that we are assuming that
it is the essential depth of $\pi$ as well).
Let $f_{\gamma}(x)$ be the characteristic function of 
$U_{\gamma, \K}$. Then by the local constancy result discussed above, 
\begin{equation}\label{eq:thetas}
\Theta_{\pi}(f_{\gamma})
=\vol\bigl(U_{\gamma, \K})\theta_{\pi}(\gamma).
\end{equation}

By Theorem \ref{thm:distribution characters}(2, 3), the functions 
$\frac{Q_{\fg, -r}(x)}{\deg(x)}\Theta_{x}(f_{\gamma})$ form a constructible family of
constructible motivic
exponential functions of $\gamma$, indexed by $x\in B_{\fg, -r}$.
Then, by Lemma \ref{lem:s_gamma} combined with \eqref{eq:thetas}, the functions 
 $$
\frac{Q_{\fg, -r}(x)}{\deg(x)}\theta_{x}(\gamma)=
q^{-n(\gamma)}\frac{Q_{\fg, -r}(x)}{\deg(x)}\Theta_{x}(f_{\gamma})
$$ 
form a constructible family of constructible motivic exponential functions,
and we can define 
\begin{equation*}
F(x, \gamma):=q^{-n(\gamma)}\frac{Q_{\fg, -r}(x)}{\deg(x)}\Theta_{x}(f_{\gamma}),
\end{equation*} 
which completes the proof.
We observe that dividing by the formal degree
was only necessary
to show that $F(x, \gamma)$ is constructible in the variable 
$x\in B_{\fg, -r}$
(see Remark \ref{remark:dep-on-haar}).


\subsection{Proof of Theorem \ref{thm:character} in the positive 
characteristic case}\label{sub: proof thm 3 positive char}
As above, $\pi$ is a restricted representation of minimal depth $r$, 
see Section \ref{sub: JKdatum}.

Let us summarize the relevant results from the previous sections.
Let $f_a$ be  any  constructible family of constructible 
motivic functions, with the parameter $a$ coming 
from some definable subassignment, 
and such that the support of $f_{a, \K}$ is contained in some fixed 
definable compact subset of $G(\K)_r\reg$ for all $\K\in {\mathcal F}_{\Z, M_1}$ 
for some $M_1$. 
Then: 
\begin{enumerate}
\item\label{item:family}  
There exist\out s a definable subassignment $B_{\fg, -r}\in \rde_{\Q}$, 
and $M>0$, such that for every  $\K\in {\mathcal F}_{\Z, M}$%
\fto{, for}{ and} every
restricted representation $\pi$ of $G(\K)$ of minimal depth $r$, 
the values $\Theta_{\pi}(f_{a, \K})$ 
depend only on \fto{a point}{$a$ and the image} $x\in B_{\fg, r}(\rf_{\K})$%
\ins{ of $\pi$ under the map of Theorem
\ref{thm:distribution characters}\pref{thm:distribution characters:B_g,-r}}%
, so we can 
\fto{refer to them as}{write instead} $\Theta_{x}(f_{a, \K})$.
\item
\out{The value} $\Theta_{x}(f_a)$ \out{, as a function of $a$,}
is a constructible 
motivic exponential function\ins{ of $a$}. We denote it by $F_x(a)$.
\end{enumerate}

Recall the family $\xi_{a,n}$ of definable test functions from 
Section \ref{subsub:thefamily},  that spans a dense subspace of $C_c^\infty(G(\K))$ for all 
$\K\in {\mathcal F}_{\Z, M_{\xi}}$ for the constant $M_{\xi}$ that depends only on $G$.
Let $M$ be the constant from the already proven characteristic zero case of 
Theorem \ref{thm:character}, and let $M_0=\max(M, M_\xi)$. 
Then, for the fields of characteristic zero $\K\in {\mathcal A}_{\Z, M_0}$ 
in addition to the above two statements, we also have:
\begin{enumerate}\setcounter{enumi}{2}
\item\label{item:F}
 There exists a constructible motivic exponential function 
$F$ on $B_{\fg, r}\times G\reg$ such that for all $\K\in {\mathcal A}_{\Z, M_0}$,
$$
F_\K\ins{(x, \gamma)}=\frac{Q_{\fg, -r, \K}(x)}{\deg(x)}\theta_x(\gamma).$$

\item There exists a family of definable sets $U_{\gamma}$ 
(see Lemma \ref{lem:U_gamma}), with $\gamma\in G\reg$, such that 
for all fields $\K\in {\mathcal A}_{\Z, M_0}$, 
the character function $\theta_x\out{(\gamma)}$ is constant on $U_{\gamma, \K}$.  
\end{enumerate}


The Lemma below asserts that we can  transfer sufficient information about local constancy of the
character to the positive characteristic case, using everything we already 
proved in the  characteristic zero case.

\begin{lemma}\label{lem:local constancy}
Let $\K\in {\mathcal B}_{\Z, M_0}$, and let $\pi$ be a restricted 
representation of $G(\K)$ of minimal depth $r$. 
Then  the function $\theta_{\pi}\out{(\gamma)}$ is constant on the sets 
$U_{\gamma, \K}$. 
\end{lemma}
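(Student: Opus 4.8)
The plan is to obtain the lemma by \emph{transferring} the characteristic-zero local constancy, which we have already established, to positive characteristic, rather than by re-running Korman's argument. It suffices to treat $\gamma\in G(\K)_r\reg$, as this is all that Theorem~\ref{thm:character} requires, and for such $\gamma$ one has $U_{\gamma,\K}\subseteq G(\K)_r\reg$. The obstruction to transferring directly is that $\theta_\pi$ is not itself the specialization of a constructible motivic function; what \emph{is} controlled by constructible motivic exponential functions, via Theorem~\ref{thm:distribution characters}(2) and (3), is the distribution character evaluated against definable families of test functions. So the strategy is to transfer a statement about such integrals and then to recover the pointwise statement using the fact --- valid in all characteristics (see the discussion before Definition~\ref{def: r-equiv.reps} and \cite{adler-korman:loc-char-exp}*{Appendix}) --- that $\theta_\pi$ is locally constant on $G(\K)\reg$ and represents $\Theta_\pi$ on test functions supported there.

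First I would fix, by the methods of \S\ref{subsub:definable} and \cite{cunningham-hales:good} and using the big-cell coordinates of the proof of Lemma~\ref{lem:U_gamma}, a definable subassignment $\mc P$ parameterizing coordinate balls $V_{\beta,\K}\subseteq G(\K)_r\reg$, together with their characteristic functions and their (constructible) motivic volumes $\vol(V_\beta)$. Applying Theorem~\ref{thm:distribution characters}(2,3) to the definable family $\{\mathbf 1_{V_\beta}\}_{\beta\in\mc P}$, exactly as $F$ was built in Section~\ref{sub: proof thm 3}, yields a constructible motivic exponential function $\wtilde F$ on $B_{\fg,-r}\times\mc P$ with $\wtilde F_\K(x,\beta)=\frac{Q_{\fg,-r,\K}(x)}{\deg(x)}\,\Theta_x(\mathbf 1_{V_{\beta,\K}})$ for all $\K$ of large residue characteristic and all $x\in B_{\fg,-r}(\rf_\K)$. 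Next I would let $\mc W$ be the definable subassignment of $B_{\fg,-r}\times G_{0^+}\reg\times\mc P\times\mc P$ consisting of tuples $(x,\gamma,\beta,\beta')$ with $V_\beta\subseteq U_\gamma$ and $V_{\beta'}\subseteq U_\gamma$ --- definable because $U$ is, so that inclusion of one coordinate ball in another is a Denef--Pas condition --- and consider on $\mc W$ the constructible motivic exponential function
\[
\Xi(x,\gamma,\beta,\beta')\ceq\vol(V_{\beta'})\,\wtilde F(x,\beta)-\vol(V_{\beta})\,\wtilde F(x,\beta').
\]

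For $\K\in{\mathcal A}_{\Z,M_0}$, the characteristic-zero case of Theorem~\ref{thm:character} (item~(4) of the list above) says $\theta_x$ is constant on each $U_{\gamma,\K}$, so $\Theta_x(\mathbf 1_{V_{\beta,\K}})=\theta_x(\gamma)\vol(V_{\beta,\K})$ whenever $V_{\beta,\K}\subseteq U_{\gamma,\K}$, whence $\Xi$ specializes to $0$ on $\mc W_\K$. By the transfer principle for constructible motivic exponential functions (\cite{CLF}; cf.\ the discussion of transfer in Section~\ref{sec:badprimes}), after enlarging $M_0$ the function $\Xi$ specializes to $0$ on $\mc W_\K$ for every $\K\in{\mathcal B}_{\Z,M_0}$ as well; since $Q_{\fg,-r,\K}(x)/\deg(x)$ is a positive rational number, this gives, for such $\K$, that $\Theta_x(\mathbf 1_{V_1})/\vol(V_1)=\Theta_x(\mathbf 1_{V_2})/\vol(V_2)$ for any coordinate balls $V_1,V_2\subseteq U_{\gamma,\K}$. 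To conclude, given $\gamma_1,\gamma_2\in U_{\gamma,\K}$ I would use local constancy of $\theta_\pi$ and openness of $U_{\gamma,\K}$ to choose coordinate balls $V_i\ni\gamma_i$ with $V_i\subseteq U_{\gamma,\K}$ on which $\theta_\pi$ is constant; then $\Theta_\pi(\mathbf 1_{V_i})=\int_{V_i}\theta_\pi(g)\,dg=\theta_\pi(\gamma_i)\vol(V_i)$, so the identity just obtained forces $\theta_\pi(\gamma_1)=\theta_\pi(\gamma_2)$, and the lemma follows.

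The step I expect to be the main obstacle is the definability bookkeeping in the second paragraph: arranging the family $\{\mathbf 1_{V_\beta}\}$ so that it uniformly satisfies the support hypothesis of Theorem~\ref{thm:distribution characters}(3) --- which is why one restricts from the outset to balls inside $G(\K)_r\reg$ --- and checking that ``$V_\beta\subseteq U_\gamma$'' is definable uniformly across the finitely many big-cell charts used in Lemma~\ref{lem:U_gamma}. I note that one could sidestep the transfer argument entirely: as remarked after Theorem~\ref{thm:korman}, Korman's result is a special case of \cite{adler-korman:loc-char-exp}*{Corollary~11.9}, which carries no hypothesis on $\chr\K$, so Corollary~\ref{cor:T_gamma} and Lemma~\ref{lem:U_gamma} go through verbatim in positive characteristic; the transfer route is presented only to illustrate the method.
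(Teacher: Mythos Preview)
Your argument is correct and uses the same essential ingredients as the paper --- the transfer principle for constructible motivic exponential functions, together with local constancy of $\theta_\pi$ on the regular set in all characteristics --- but you organize them differently. The paper first transfers the identity $\Theta_\pi(\eta_a)=\int F(x,g)\eta_a(g)\,dg$ for the dense family $\{\eta_a\}$ of \S\ref{subsub:thefamily}, separately transfers the constancy of $F_\K(x,\blankdot)$ on $U_{\gamma,\K}$, and then runs an $\varepsilon$-approximation argument (approximating $\mathbf 1_{U_b}$ by finite linear combinations of $\eta_{a_i}$ and letting $\varepsilon\to 0$) to conclude that $\theta_\pi(g_0)=\frac{\deg(x)}{Q_{\fg,-r,\K}(x)}F_\K(x,\gamma_0)$ for any $g_0\in U_{\gamma_0,\K}$. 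You instead package the constancy statement directly as the vanishing of a single constructible motivic exponential function $\Xi$ on a definable parameter space, transfer that vanishing in one step, and read off the pointwise equality from characteristic functions of small balls that are already members of your definable family --- no limiting process required. Your route is cleaner and sidesteps the density argument entirely; the paper's route has the mild advantage that it establishes the stronger intermediate statement $\theta_\pi=\frac{\deg(x)}{Q_{\fg,-r,\K}(x)}F_\K(x,\blankdot)$ on $G(\K)_r\reg$, which is exactly what is needed to complete Theorem~\ref{thm:character} in positive characteristic. Your closing remark that one could bypass transfer altogether via \cite{adler-korman:loc-char-exp}*{Corollary 11.9} matches the paper's own observation preceding the lemma.
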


\begin{proof}
Let $x\in B_{\fg, -r}(\rf_\K)$ be the point that corresponds to the 
definable $r$-equivalence class of $\pi$.
Let $\gamma_0\in G(\K)_r\reg$.
\fto{
Since the function $\theta_{\pi}(\gamma)$ is locally constant, there 
exists the maximal open set $W_0$ containing $\gamma_0$ such that 
for $\gamma\in W_0$, 
we have $\theta_{\pi}(\gamma)=\theta_{\pi}(\gamma_0)$. 
We want to show that $W_0\supset U_{\gamma_0, \K}$. 
Suppose this is not so, and there exists a point
}{%
Suppose that
}
$g_0\in U_{\gamma_0, \K}$\fto{ such that}{, and put}
$\theta_{\pi}(g_0)=b\out{\neq \theta_{\pi}(\gamma_0)}$. 
Then by local constancy, there is a neighbourhood $U_b$ of $g_0$ such that
the character $\theta_{\pi}$ takes the value $b$ everywhere on $U_b$.
We can assume that $U_b\subset U_{\gamma_0, \K}$.   
Let ${\bf 1}_{U_b}$ be the characteristic function of the set $U_b$.
Let $\{\eta_a\}_{a\in A}$ be any family of definable test functions 
satisfying the 
conditions (1) and (2) 
of Definition \ref{def:definable equivalence}, for example, 
the family $\xi_{a,n}$ of Section \ref{subsub:thefamily}. 
Then there is a definable 
compact set $W_\K\subset G(\K)\reg$, containing $U_b$, and 
for every $\varepsilon >0$, a definable 
function $f_{\varepsilon}=\sum_{i=1}^mc_i\eta_{a_i, \K}$
(where the number $m$, the constants $c_i\in \C$, 
and the parameters $a_i\in A_\K$ depend on $\varepsilon$), 
such that $\supp (f_{\varepsilon})\subset W_\K$, and 
$$\sup_{W_\K}|{\bf 1}_{\U_b}-f_{\varepsilon}|<\varepsilon.$$

Let us evaluate the distribution character on the function 
$f_{\varepsilon}$.
On the one hand, by definition of $\theta_{\pi}$,
\begin{equation}\label{eq:one side}
\begin{aligned}
\left|
\Theta_{\pi}(f_{\varepsilon})- b\vol(U_b)\right|&=
\left|
\int_{G(\K)}\theta_{\pi}(g)f_{\varepsilon}(g)\, dg-
\int_{G(\K)}\theta_{\pi}(g){\bf 1}_{U_b}(g)\, dg
\right|\\
&<\varepsilon \vol(W_{\K})\sup_{W_\K}|\theta_{\pi}|.
\end{aligned}
\end{equation}
On the other hand, 
$\Theta_{\pi}(\eta_{a})$
is a constructible motivic function of $a$, 
and it can be specialized both to the fields in ${\mathcal A}_{\Z, M_0}$ 
and in 
${\mathcal B}_{\Z, M_0}$.
	{By the characteristic-zero case of Theorem
\ref{thm:character} (see \S\ref{sub: proof thm 3}), on}
 ${\mathcal A}_{\Z, M_0}$, its 
specialization coincides with the specialization of the function 
$$\int_{G\reg}\frac{\deg(x)}{Q_{\fg, -r}(x)}F(x,g)\eta_{a}(g)\, dg$$
of $a$, where $F$ is the function from condition (\ref{item:F}) above.

Therefore, by the transfer principle (see e.g. \cite{CHL}*{Theorem 2.7.2}),
the specializations of these two functions
\ins{to the fields in $\mathcal B_{\Z, M_0}$}
also coincide%
\out{ for the fields in ${\mathcal B}_{\Z, M_0}$}%
.
Hence, for every $i=0, \dots, m$, 
\begin{equation}
\Theta_{\pi}(\eta_{a_i, \K})=
\left(\frac{\deg(x)}{Q_{\fg, -r}(x)}\int_{G}
F(x,g)\eta_a(g)\, dg\right)_{\K}(a_i)=
\frac{\deg(x)}{Q_{\fg, -r, \K}(x)}\int_{G(\K)\reg}F_\K(x,g)\eta_{a_i}(g)\,dg.
\end{equation}

Since, for all $\K \in \mc A_{\Z, M}$,
the specialization of $F(x, g)$ to $\K$ is constant
for $g \in U_{\gamma, \K}$,
it also has to be constant 
for $\K\in {\mathcal B}_{\Z, M_0}$, and we have:

$$
\int_{G(\K)\reg}F_\K(x,g){\bf 1}_{U_b}(g)\,dg=
F_\K(x,\gamma_0)\vol(U_b).
$$


We get:
\begin{equation}\label{ineq: F and theta}
\left|\Theta_{\pi}(f_{\varepsilon})-
\frac{\deg(x)}{Q_{\fg, -r, \K}(x)}F_{\K}(x,\gamma_0)\vol(U_b)\right|<
\varepsilon \vol(W_\K)\sup_{g\in W_\K}F_\K(x, g)\frac{\deg(x)}{Q_{\fg, -r, \K}(x)}.
\end{equation}

We observe that when the field $\K$ is fixed, and the point 
$x\in B_{\fg, -r, \K}$ is fixed as well, while $\varepsilon$ 
and,  accordingly, $a_i$ and the function $f_{\varepsilon}$ vary,  
all the factors in the right hand side of the inequalities 
(\ref{ineq: F and theta}) and (\ref{eq:one side}), except $\varepsilon$, 
remain constant. 
Letting $\varepsilon \to 0$ and comparing (\ref{ineq: F and theta}) with (\ref{eq:one side}), we get
$$\theta_\pi(g_0) = b=\frac{\deg(x)}{Q_{\fg, -r, \K}(x)}F_\K(x, \gamma_0).$$
A similar argument shows that 
$\theta_{\pi}(\gamma_0)=\frac{\deg(x)}{Q_{\fg, -r, \K}(x)}F_\K(x, \gamma_0)=b$,
\fto	{which contradicts our assumption on $b$}
	{as desired}%
.
\end{proof}

\begin{remark}
A similar argument could be used to carry J.~Korman's local constancy result 
of \cite{korman:local-constancy} over to the fields of positive 
characteristic, when the residue characteristic is large enough.
On the other hand, this result in positive characteristic also follows from 
\cite{adler-korman:loc-char-exp}.  
In general, the fact that the characters are motivic\out, should have 
implications for their local constancy; %
\fto{, which remain}{ this remains} to be explored. 
\end{remark}

Now we can prove Theorem \ref{thm:character} in full generality, not just for 
characteristic zero fields: we just need to repeat the conclusion of the proof of Theorem 
\ref{thm:character} from Section \ref{subsub:end of proof}, which now works 
both in ${\mathcal A}_{\Z, M_0}$ and ${\mathcal B}_{\Z, M_0}$ by 
Lemma \ref{lem:local constancy}.

\section{Concluding remarks and future questions}

\subsection{Toward local integrability in positive characteristic}
\label{sub: local integrability}
We believe that just using the information about the distribution 
character on our family 
of functions, one can prove the local integrability of the character 
function%
\ins{ (at least in the range of validity of Theorem
\ref{thm:distribution characters}\pref{thm:distribution characters:motivic}%
)}%
.
So far 
we have shown that%
\ins{, for all fields $\K \in \mc F_{\Z, M_0}$,}
every definable $r$-equivalence class of restricted 
representations of $G(\K)$ of minimal depth $r$  corresponds to a point
$x\in B_{\fg, -r}(\rf_\K)$%
\out{ for all fields $\K\in {\mathcal F}_{\Z, M_0}$}%
, and the 
character $\theta_{\pi}$ on $G(\K)_r\reg$ is the
\ins{restriction of the}
specialization of the 
constructible motivic exponential function $F(x, g)$, up to a rational 
constant $\frac{\deg(x)}{Q_{\fg, -r, \K}(x)}$. 
Further, we know that for all fields $\K\in {\mathcal A}_{\Z, M_0}$
and $x \in B_{\fg, -r}(\rf_\K)$,
the 
specialization of $F$ is locally integrable with respect to $g$. 
We expect that a general ``transfer of integrability'' result should hold
for constructible motivic exponential functions\fto{, and}; this will be explored in 
 future work. 
Finally, once we \fto{have}{had} the local integrability result, we could 
conclude that the 
distribution character \fto{is}{was} a continuous linear functional on the space 
$C_c^{\infty}(G(\K)_r)$ for all $\K\in {\mathcal F}_{\Z, M}$, and therefore knowing its values on a definable family of definable test functions would be sufficient. 

\subsection{Toward an explicit algorithm for computing character values}
Suppose one wanted to write a computer program that
\fto{takes}{took} the residue field characteristic $p$ and the depth $r$ 
as an input, and 
\fto{produces}{produced}  a number $M$ and 
the character tables for representations of depth $r$ of 
$\operatorname{Sp}_{2N}(\Q_p)$ and $\operatorname{SO}_n(\Q_p)$, for $p>M$ 
(here we take $\Q_p$ for simplicity so that we do not have to think of the implementation of the definition of a field extension; all our arguments work for an arbitrary $p$-adic field.
Here is what we can contribute so far towards such a program.
\begin{enumerate}
\item It is easy to write a program that would generate the 
parameter space $B_{\fg, -r}$ following the recipe given here and in 
\cite{cunningham-hales:good}. No integration is required.

\item If motivic integration had been implemented, then one could further 
take a formula defining the test function $f$ as input, and for every such 
function with support contained in $G(\Q_p)_r\reg$, 
output  the actual value of the distribution character evaluated at 
this function.  
\end{enumerate}
This already would make a lot of experimentation with characters possible. In particular, one could
\ins{(near the identity)}
test the character for local constancy on specific sets, as long as those sets were definable;
\out{one could} test various linear combinations of characters for stability; etc.

\begin{enumerate}
\addtocounter{enumi}2
\item Since we know that the character is a constructible function, we conclude that if an algorithm for cell decomposition had been 
implemented, we could produce, for each \out{given} depth $\fto s r$,
a complete list of 
neighbourhoods of regular elements of \out a depth \ins{at least}
$\fto s r$ 
\out{in our neighbourhood of the identity}
on which the function 
$\theta_{\out{\pi_}x}$ is constant, and the list of
\ins{its} values \out{of this function}
on \fto{these}{those} neighbourhoods.

\end{enumerate}

\subsection{Future questions}
There are a few things pleading
to be carried out in the context of this paper, and we plan to address them in future work.
First, there is folklore evidence that it should be possible to improve the 
local constancy result of \cite{korman:local-constancy}, and our 
Theorem \ref{thm:character}
corroborates that:
\fto	{%
for a constructible exponential function, there has to be 
a definable family of subsets that form a partition of the domain, such that
the function  has to be constant on these sets%
}{%
the level sets of a constructible exponential function
form a partition of its domain into definable subsets%
}%
, and it seems that the 
sets $\gamma T\ins{(\K)}_{r'+s(\gamma)}$ are not quite large enough to
\fto{form a suitable partition}{be definable}
because of the ``extra'' term $s(\gamma)$. 
We hope to obtain a rigorous proof along these lines.

Second, it is natural to investigate the character away from the identity. 
It is clear that the multiplicative characters of the field need to be 
included in the language in order to make similar treatment possible%
\fto{, but it is very likely}{.  We expect}
that, once this is done, it will be possible to describe the
\ins{values of the} Harish-Chandra character \out{values}
on the whole group. 

\begin{bibdiv}
\begin{biblist}
\bibselect{references}
\end{biblist}
\end{bibdiv}
\end{document}